\newtheorem{theorem}{Theorem}
\newtheorem{corollary}[theorem]{Corollary}
\newtheorem{lemma}[theorem]{Lemma}
\newtheorem{proposition}[theorem]{Proposition}
\newtheorem{conjecture}[theorem]{Conjecture}
\theoremstyle{definition}
\newcommand{\tba}{triangle blocking arrangement}
\newcommand{\tbas}{triangle blocking arrangements}
\newcommand{\trt}{\mathbf{T}}
\newcommand{\tbty}[2]{\mathbf{#1_{#2}}}
\newcommand{\intr}{\operatorname{int}}
\newcommand{\ct}{classification theorem}
\def\blfootnote{\gdef\@thefnmark{}\@footnotetext}
\begin{document}
\title{Classification theorem for strong triangle blocking arrangements}
\date{}
\author{Luka Mili\'cevi\'c
\thanks{E-mail address: \texttt{luka.milicevic@turing.mi.sanu.ac.rs}}}
\affil{Mathematical Institute of the Serbian Academy of Sciences and Arts}
\maketitle

\abstract{A strong triangle blocking arrangement is a geometric arrangement of some line segments in a triangle with certain intersection properties. It turns out that they are closely related to blocking sets. Our aim in this paper is to prove a classification theorem for strong triangle blocking arrangements. As an application, we obtain a new proof of the result of Ackerman, Buchin, Knauer, Pinchasi and Rote which says that $n$ points in general position cannot be blocked by $n-1$ points, unless $n = 2,4$. We also conjecture an extremal variant of the blocking points problem.}

\section{Introduction}\label{sectionIntro}

\hspace{12pt}We start with some background. In~\cite{ErdosPurdy}, Erd\H{o}s and Purdy posed the following problem. Given a set $P$ of $n$ points in the plane, not all on a line, how many new points (different from points in $P$) have to be chosen so that every line spanned by $P$ meets a new point? They conjectured that the answer is at least $(1 + o(1))n$ lines. We refer to the set $P$ as the \emph{initial points} and to the newly chosen points as the \emph{blocking points}. We also say that a blocking point $x$ \emph{blocks} a line $l$ if $x \in l$. The current record is due to Pinchasi~\cite{Pinch}.  

\begin{theorem}[Pinchasi~\cite{Pinch}] Given a set $P$ of $n$ points in the plane, not all on a line, we need at least $\frac{n-1}{3}$ blocking points to block all lines spanned by $P$.\end{theorem}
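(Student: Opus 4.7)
The plan is to exploit the following elementary observation: for any $v \in P$, if $L_v$ denotes the set of distinct lines spanned by $P$ that pass through $v$, then every blocking point lies on at most one element of $L_v$. Indeed, any two such lines meet only at $v$, which is an initial point and therefore not a blocking point. Hence $|B| \ge |L_v|$ for every $v \in P$, where $B$ is the blocking set. The theorem thus reduces to finding some $v \in P$ with $|L_v| \ge (n-1)/3$.

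I would take $v$ to be a convex-hull vertex of $P$, so that $P \setminus \{v\}$ lies in a closed half-plane bounded by a line through $v$. If $|L_v| \ge (n-1)/3$ we are done, so assume the contrary. Since $\sum_{\ell \in L_v}\bigl(|\ell \cap P|-1\bigr) = n-1$, averaging produces a line $\ell \in L_v$ containing at least four initial points. I would then select a second convex-hull vertex $w$ on $\ell$; such a $w$ exists because the convex hull has at least three vertices, and the two endpoints of $\ell \cap \mathrm{conv}(P)$ are hull vertices lying on $\ell$. Applying the observation to $w$ yields $|B| \ge |L_w|$, and a blocking point counted in both $L_v$ and $L_w$ must lie on the common line $\ell = vw$. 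Thus at most one blocking point is double-counted, giving $|B| \ge \tfrac{1}{2}(|L_v|+|L_w|-1)$.

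The main obstacle is to close the argument when this averaged bound still falls short of $(n-1)/3$, which is exactly the regime in which rich lines persist through $w$ as well. My fall-back is to iterate: either by picking further anchor vertices along the rich lines and carefully tracking overlaps (each pair of anchors contributes at most one shared blocking point, on their common line), or by induction on $n$ in which one removes a carefully chosen interior point of a rich line and invokes the theorem on the smaller configuration, monitoring how the bound degrades at each step. The most delicate step will be near-pencil-type configurations in which many lines are concurrent through several points simultaneously, where one must weigh several anchor contributions against the collinearity pattern of $P$ to recover the claimed $(n-1)/3$ bound.
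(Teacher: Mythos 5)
This statement is quoted in the paper from Pinchasi's work and is not proved there, so there is no internal proof to compare with; judged on its own terms, your proposal has a genuine gap. The opening observation is fine: each blocking point lies on at most one line of the pencil $L_v$, so $|B|\ge |L_v|$, and the averaging identity $\sum_{\ell\in L_v}(|\ell\cap P|-1)=n-1$ correctly produces a rich line through $v$ when $|L_v|<(n-1)/3$. But the two-anchor step is already shaky: a blocking point counted for both $v$ and $w$ need \emph{not} lie on the common line $\ell=vw$ --- it can be the crossing of a line through $v$ with a \emph{different} line through $w$ --- so your "at most one point is double-counted" claim is false. The inequality $|B|\ge\tfrac12(|L_v|+|L_w|-1)$ does hold, but only for the weaker reason that each blocking point covers at most one line from each pencil; with $k$ anchors this degrades to $|B|\ge\tfrac1k\bigl(\sum_i |L_{v_i}|-\text{overlaps}\bigr)$, which is exactly why the iteration does not obviously close. (Also, the second point of $P$ on $\ell$ need not be a convex-hull vertex: the far endpoint of $\ell\cap\operatorname{conv}(P)$ need not belong to $P$, and the extreme point of $P$ along $\ell$ can lie in the interior of the hull.)

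The deeper problem is that your strategy reduces the theorem to finding a point, or a small set of anchor points on average, incident to roughly $(n-1)/3$ spanned lines. That is a strong quantitative form of the weak Dirac conjecture: the existence of a point on $\Omega(n)$ spanned lines is known only with small unspecified or very small explicit constants (via Beck/Szemer\'edi--Trotter type arguments), nowhere near $1/3$, and your sketch supplies no mechanism to achieve it. The fall-back "iterate over anchors" or "induct by deleting a point" is only named, not executed: deleting a point and invoking the statement for $n-1$ points loses $1/3$ per step and you give no argument recovering it, and the near-pencil-type configurations you flag are precisely where the anchor bookkeeping breaks. So as it stands this is a plan that front-loads the easy counting and leaves the actual difficulty --- which in Pinchasi's paper requires a substantial combinatorial argument --- unresolved.
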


It is worth mentioning that the first bounds of the form $\Omega(n)$ were due to Szemer\'edi and Trotter~\cite{SzTrotter} and Beck~\cite{Beck}. Those papers are in fact about Dirac's conjecture, which states that for a set $\mathcal{L}$ of $n$ non-concurrent lines in the plane, there is a line $l \in \mathcal{L}$ that forms at least $\frac{n}{2} - O(1)$ different intersection points with lines in $\mathcal{L}$. Szemer\'edi and Trotter, and Beck, prove that there is a line with $\Omega(n)$ different intersection points, which implies in particular that at least $\Omega(n)$ blocking points are necessary. Let us also remark that in~\cite{SzTrotter}, Szemer\'edi and Trotter prove their celebrated theorem on the number of incidence between lines and points, and they use this theorem to deduce the weak form of Dirac's conjecture. On the other hand, Pinchasi's proof is purely combinatorial and avoids any use of the incidence theorem.\\

In the same paper~\cite{ErdosPurdy}, Erd\H{o}s and Purdy propose a variant of the question where $P$ has to be in general position (i.e. no three points are collinear), but remark that Gr\"{u}nbaum pointed out to them that $2\lfloor\frac{n}{2}\rfloor$ new points suffice. It is easy to see that we need at least $n$ new points when $n$ is odd, and $n-1$ new points when $n$ is even, so this remark actually has a typo, and should probably be $2\lceil\frac{n}{2} \rceil -1$. However, with this obvious typo corrected, what Erd\H{o}s and Purdy actually imply is that there are examples where $n-1$ new points are sufficient, for infinitely many $n$. Therefore, the right question is: when do $n-1$ new points suffice? This was answered by Ackerman, Buchin, Knauer, Pinchasi and Rote in~\cite{magic}.

\begin{theorem}[Ackerman, Buchin, Knauer, Pinchasi and Rote~\cite{magic}]\label{blockingTheorem} Let $P$ be a set of $n \geq 5$ points in general position and let $B$ be a set of some other points in the plane, such that every line determined by two points in $P$ meets a point in $B$. Then $|B| \geq n$.\end{theorem}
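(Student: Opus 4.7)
The plan is to argue by contradiction: assume $|B| \leq n-1$, then extract a strong triangle blocking arrangement from $P \cup B$ and invoke the classification theorem promised in the title. As a preliminary reduction, I would note that we may take $|B| = n-1$ and may discard any blocking point that does not lie in the convex hull of $P$, since only segments spanned by $P$ need to be blocked. We may also assume, after a generic rotation, that no two points of $P \cup B$ share a coordinate, avoiding vertical incidences.

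Next, I would select a triangle $T$ with vertices $a,b,c \in P$ whose interior carries enough of the configuration to feed the classification theorem. The natural choice is a triangle that is extremal in some sense, for instance three vertices of the convex hull of $P$, or three points of $P$ isolating a maximal subset of $P \cup B$ in their interior. Restricted to $T$, every segment joining two points of $P$ that lies inside $T$ (together with the three sides) must be blocked by a point of $B \cap T$. The resulting collection of blocking points and blocked segments is a triangle blocking arrangement in the sense of the paper, and the key local claim is that, with a sufficiently canonical choice of $T$ and possibly a small generic perturbation to eliminate incidental coincidences, this arrangement is \emph{strong}.

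The classification theorem should then pin down the combinatorial type of this strong arrangement so tightly that it yields a sharp inequality between $|B \cap T|$ and the number of points of $P$ located in $T$ and on its boundary. Summing such inequalities over a well-chosen family of triangles --- for example the faces of a triangulation of $P$, with care to avoid double-counting blocking points that are shared along edges --- ought to produce $|B| \geq n$, contradicting the starting assumption. The main obstacle is the transition from the global configuration to a local arrangement that is genuinely strong rather than merely a generic triangle blocking arrangement: degeneracies such as several blocking segments concurrent at a point of $B$, or a blocking point lying on a side of $T$, have to be either ruled out by the choice of $T$ or handled by separate case analysis. A secondary, more quantitative obstacle is arranging the local-to-global summation so as to save exactly the one blocking point needed to push the bound from $n-1$ up to $n$, which is what makes the case $n \geq 5$ special compared to $n=4$.
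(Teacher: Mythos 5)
Your proposal does not work as stated, and the central problem is that you never pass to the dual. A triangle blocking arrangement in this paper is intrinsically a \emph{dual} object: both $\mathcal{S}$ and $\mathcal{B}$ are collections of \emph{segments}, and condition \textbf{(i)} demands that \emph{every} intersection point of two initial segments carries a blocking segment, while condition \textbf{(ii)} demands that every intersection of an initial segment with a blocking segment carries a second initial segment. In your primal picture the blocking objects are points, not segments, and the segments spanned by pairs of points of $P$ cross each other at $\Theta(n^4)$ points in general, almost none of which contain a point of $B$; so the configuration you restrict to a triangle $T$ is simply not a \tba~in the sense of the paper, and the classification theorem cannot be applied to it. The intended construction is: dualize so that $P$ becomes a family $\mathcal{L}$ of $n$ lines, no three concurrent, and $B$ becomes a family $\mathcal{B}$ of at most $n-1$ lines such that every one of the $\binom{n}{2}$ intersection points of $\mathcal{L}$ lies on a line of $\mathcal{B}$; then one finds three lines of $\mathcal{L}$ bounding a minimal triangular $\mathcal{L}$-region and applies the classification theorem to the three complementary regions they cut out of $\mathbb{P}^2$.

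A second genuine gap is your treatment of strongness. Condition \textbf{(A)} asserts that certain triples of lines \emph{are concurrent}; this is a codimension-one, non-generic condition, so ``a small generic perturbation to eliminate incidental coincidences'' moves you away from strongness rather than towards it. In the paper this concurrency is a theorem, proved by an induction on $j$ using a parity lemma (the number of internally blocked vertices of any $\mathcal{L}$-region is even, by a double-counting argument), which shows that in any minimal $\mathcal{L}$-region with internally blocked vertices $v_1,\dots,v_k$ the blocking lines are forced to be the long diagonals $v_iv_{i+k/2}$ and the lines $l_{i-j-1},\beta(v_i),l_{i+j}$ are forced to be concurrent. This is where the tight hypothesis $|\mathcal{B}|=|\mathcal{L}|-1$ is actually used, via the pigeonhole fact that each blocking line can cover at most $\lfloor n/2\rfloor$ of the $\binom{n}{2}$ intersection points; your proposal never exploits the cardinality assumption at all before invoking the classification. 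Finally, the endgame is not a summation over a triangulation: once the three outer regions are classified (each must be of type $\tbty{B}{1}$), a short collinearity argument forces four points onto one line of $\mathcal{L}$ unless $n=4$, which yields the exact statement rather than an inequality assembled from local counts.
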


Observe immediately that the theorem is trivial when $n$ is odd. Indeed, if $n$ is odd, each point in $B$ can meet at most $\lfloor \frac{n}{2} \rfloor = \frac{n-1}{2}$ lines spanned by $P$, so at least $n$ points are needed. (If $n$ is even, the same counting argument gives only the bound of $n-1$.) On the other hand, the fact that there is a non-trivial example for $n=4$ shows that we cannot hope for such a short argument in the general case. Similarly, the theorem may naturally be compared to the Sylvester-Gallai theorem (posed by Sylvester~\cite{SylvesterProblem} and solved by Gallai~\cite{GallaiSolution}), but once again, the $n=4$ case tells us that we can expect the proof to be more involved than, for example, looking at the minimal height of a triangular region formed in the dual (as in the usual proof of the Sylvester-Gallai theorem).\\
\indent Let us also remark that the regular $n$-gon in the projective plane with $n$ points on the line at infinity corresponding to directions of diagonals, show that for every $n$, $n$ blocking points suffice for certain configurations. Of course, using suitable transformations, we can make such examples affine.\\ 

Our proof of Theorem~\ref{blockingTheorem} is based on the following classification theorem (Theorem~\ref{informalClassification}), which is the main result of this paper. At this stage, we state this theorem informally, as otherwise the definitions would occupy significant part of the introduction. For an arbitrary convex polygon $R$, we write $\partial R$ for the \emph{boundary} of $R$, which is the union of its edges.\\
\indent Before we state the classification theorem, let us very briefly explain what the theorem is about. Namely, we consider a triangular region which has two sets of segments $\mathcal{S}$ and $\mathcal{B}$ inside it, with vertices on the boundary of the triangle. These have the property that whenever two segments in $\mathcal{S}$ intersect, then there is a unique segment in $\mathcal{B}$ passing through their intersection. Also, when a segment in $\mathcal{S}$ meets a segment in $\mathcal{B}$, then there is a unique other segment in $\mathcal{S}$ passing through their intersection. These two conditions come from considering the dual of the hypothetical extremal arrangement of points in general position and their blocking points, and are requirements \textbf{(i)} and \textbf{(ii)} in the classification theorem. Our aim is
to classify all such collections of segments that satisfy an additional condition. This is the condition \textbf{(iii)} in the statement. Let us remark here that although this condition looks somewhat artificial compared to the other two, it actually develops naturally in the proof of Theorem~\ref{blockingTheorem}.

\begin{theorem}\label{informalClassification} Let $T$ be a triangle, with edges $e_1, e_2, e_3$, and let $\mathcal{S}$ and $\mathcal{B}$ be collections of segments inside $T$, with endpoints on the edges of $T$, but no internal point of a segment meets $\partial T$. Write $\overline{\mathcal{S}} = \mathcal{S} \cup \{e_1, e_2, e_3\}$. Suppose that
\begin{itemize}
\item[\textbf{(i)}] No three segments of $\overline{\mathcal{S}}$ are concurrent, and for any two such segments that intersect at a point $p$, there is a unique segment $\beta(p) \in \mathcal{B}$ that passes through $p$, except possibly when the two segments are edges of $T$, in which case there might not be any such segment in $\mathcal{B}$.
\item[\textbf{(ii)}] For every intersection $p$ of a segment in $\overline{\mathcal{S}}$ and a segment in $\mathcal{B}$, there is a unique second segment in $\overline{\mathcal{S}}$ that passes through $p$.
\item[\textbf{(iii)}] In every minimal $\overline{\mathcal{S}}$-region $R$, for any consecutive vertices $v_1, v_2, v_3, v_4, v_5$ appearing in this order on $\partial R$, we have that, if $l(v_1v_2)$ and $\beta(v_3)$ intersect in $T$, and $\beta(v_3)$ crosses the interior of $R$, then $l(v_1v_2), \beta(v_3), l(v_4v_5)$ are concurrent.\end{itemize}
Then the configuration formed by $T, \mathcal{S}$ and $\mathcal{B}$ must have one of the structures shown in the Figure~\ref{structuresFigure}.
\end{theorem}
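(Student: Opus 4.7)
The plan is to proceed by structural induction on $|\mathcal{S}|$, using the combinatorial-geometric rigidity imposed by conditions \textbf{(i)}--\textbf{(iii)} to peel off extremal segments until we reach base cases appearing in Figure~\ref{structuresFigure}.

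First, I would establish a local analysis of minimal $\overline{\mathcal{S}}$-regions. By \textbf{(i)}, each such region $R$ is a convex polygon whose vertices are intersections of two segments of $\overline{\mathcal{S}}$, and at most of these vertices $v$ lies a distinguished blocking segment $\beta(v)\in\mathcal{B}$. Condition \textbf{(ii)} forces any $\beta$-segment that enters $\intr R$ to exit through a vertex of $R$ rather than through an edge interior, so the incidence structure inside $R$ is encoded by a matching on its vertex set. Condition \textbf{(iii)} is then a projective concurrency constraint for any five consecutive boundary vertices whose $\beta$-segment crosses $R$. Iterating this around $\partial R$ should show that very few combinatorial shapes of minimal regions are compatible with the presence of many crossing $\beta$-segments; in particular, I expect to rule out minimal regions with too many vertices, and to pin down the cross-ratios of the $\beta$-segments on the surviving shapes.

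Second, with the local picture controlled, I would pick an extremal segment $s\in\mathcal{S}$, for instance one of the $\overline{\mathcal{S}}$-segments bounding a minimal region incident to an edge $e_i$ of $T$. I would then carry out a surgery: delete $s$ from $\mathcal{S}$ and remove from $\mathcal{B}$ those segments that were attached, via \textbf{(i)}, only to intersections involving $s$. The two minimal regions meeting along $s$ fuse into a single region of the new arrangement, while all other regions are unchanged. I would verify that the reduced pair $(\mathcal{S}',\mathcal{B}')$ still satisfies \textbf{(i)}--\textbf{(iii)}, apply the inductive hypothesis to place it in the classification list, and finally show that reinserting $s$ is consistent only with configurations appearing in Figure~\ref{structuresFigure}: the reinsertion should propagate the projective concurrency pattern one ``step'' further along the family depicted there. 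The base case would treat $|\mathcal{S}|=0$ and a handful of small values of $|\mathcal{S}|$, where one checks directly that the only possibilities are the smallest structures in the figure.

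The main obstacle I expect is verifying that condition \textbf{(iii)} is preserved under the removal of $s$. The condition is stated in terms of five consecutive vertices of a minimal region, and after fusing the two regions adjacent to $s$ into a single region, new quintuples $v_1,\dots,v_5$ of consecutive vertices appear along the enlarged boundary; one must check that the required concurrency of $l(v_1v_2)$, $\beta(v_3)$, $l(v_4v_5)$ either held automatically before the surgery or is forced by combining several instances of \textbf{(iii)} applied to the original configuration, perhaps together with a Pappus- or Desargues-style projective identity. Handling this, together with degenerate cases where $s$ is incident to a corner of $T$ (where $\beta(v)$ may be absent) or where $\beta(v_3)$ in the new region coincides with a $\beta$-segment previously adjacent to $s$, will likely require a delicate case analysis parameterised by the local combinatorics near $s$, and is where most of the real work of the proof should lie.
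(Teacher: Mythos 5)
Your overall strategy (induction on the size of the arrangement) matches the paper's, but the specific reduction you propose --- delete one extremal segment $s$, discard the blocking segments ``attached only to intersections involving $s$'', and apply the inductive hypothesis to the reduced pair --- does not work, and the failure is more basic than the difficulty you flag with condition \textbf{(iii)}. The problem is that conditions \textbf{(i)} and \textbf{(ii)} are already not preserved. In these arrangements a single blocking segment typically blocks several vertices at once. Already in the simplest nontrivial type $\tbty{B}{1}$, the blocking segment $v_1u_2$ simultaneously blocks the vertex $v_1$ (the intersection of the initial segment $v_1u_1$ with an edge of $T$) and the vertex $u_2$ (the intersection of $v_2u_2$ with another edge). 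If you delete the initial segment $v_1u_1$, you cannot delete $v_1u_2$ from $\mathcal{B}$, since $u_2$ still must be blocked by \textbf{(i)}; but if you keep it, its endpoint $v_1$ becomes an intersection of a blocking segment with an edge of $T$ through which only one initial segment (the edge itself) passes, violating \textbf{(ii)}. So the reduced pair is not a \tba\ and the inductive hypothesis cannot be invoked. Any fix would require deleting whole groups of segments simultaneously, at which point one is essentially forced to already know the global structure the theorem is trying to establish.

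The paper sidesteps this by choosing a reduction that automatically preserves the axioms: instead of deleting a segment, it restricts the arrangement to a sub-triangle $T'$ whose edges are contained in segments of $\overline{\mathcal{S}}$. The induced triple $(T', \mathcal{S}_{T'}, \mathcal{B}_{T'})$ is again a \tba\ of strictly smaller size, so the $(n-1)$-\ct\ applies to it with no verification burden. The actual work then consists of a chain of forbidden-substructure results (Propositions~\ref{ForbStruct1}--\ref{ForbStruct3}, Lemma~\ref{triangularBlocksLemma}, Propositions~\ref{triangleCoversProp} and~\ref{finalS3prop}, Corollary~\ref{segmentsCorollary}, and Propositions~\ref{FinalInternalBlocks} and~\ref{DisjointSegmentsProp}): one classifies where intersecting initial segments can have their endpoints and which corners of $T$ are internally blocked, repeatedly applying the inductive hypothesis to sub-triangles cut out by initial segments and using assumption \textbf{(A)} (your \textbf{(iii)}) together with the parity lemma on internally blocked vertices to derive concurrency contradictions. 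If you want to salvage your plan, replacing ``delete a segment'' by ``restrict to a triangle bounded by initial segments'' is the essential missing idea.
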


\begin{figure}
\includegraphics[width=\textwidth]{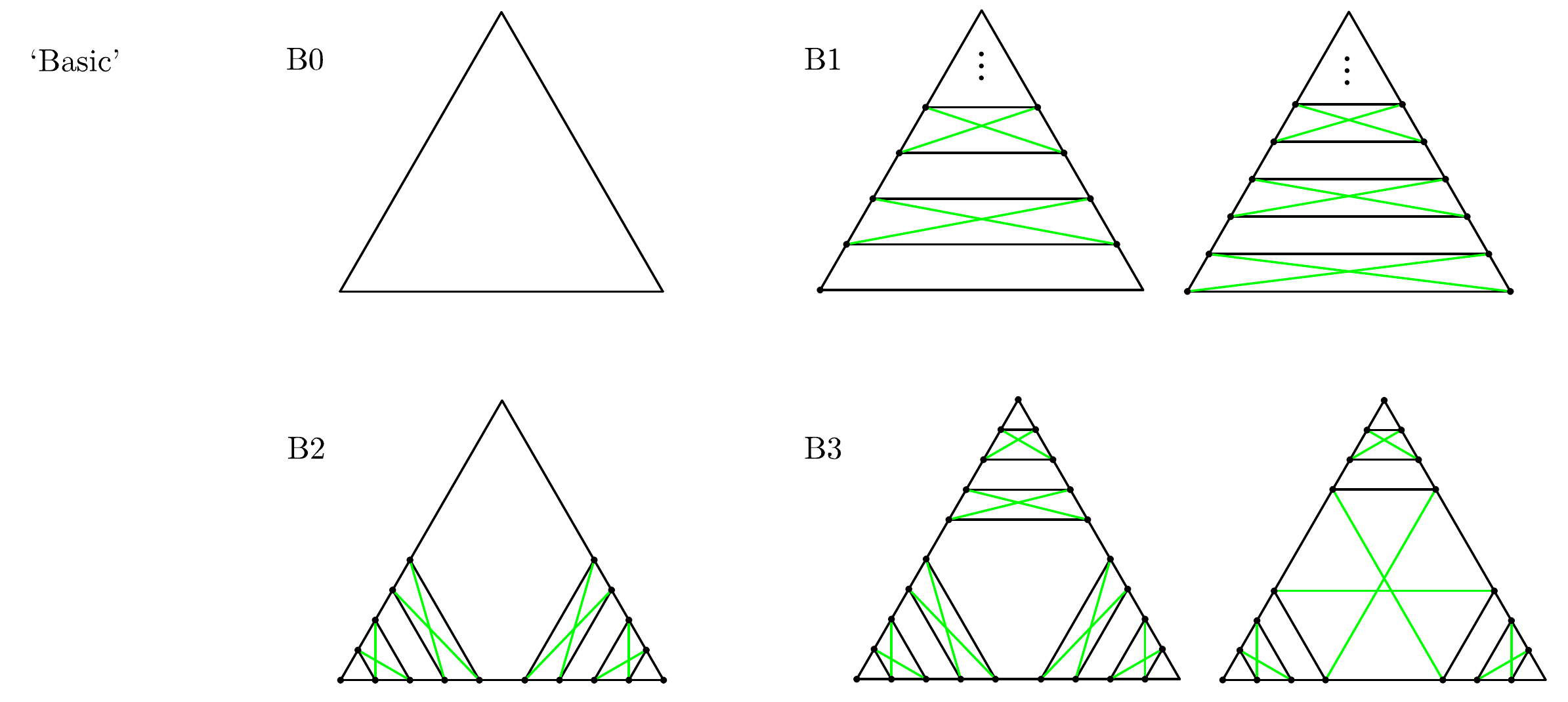}\\[24pt]
\includegraphics[width=\textwidth]{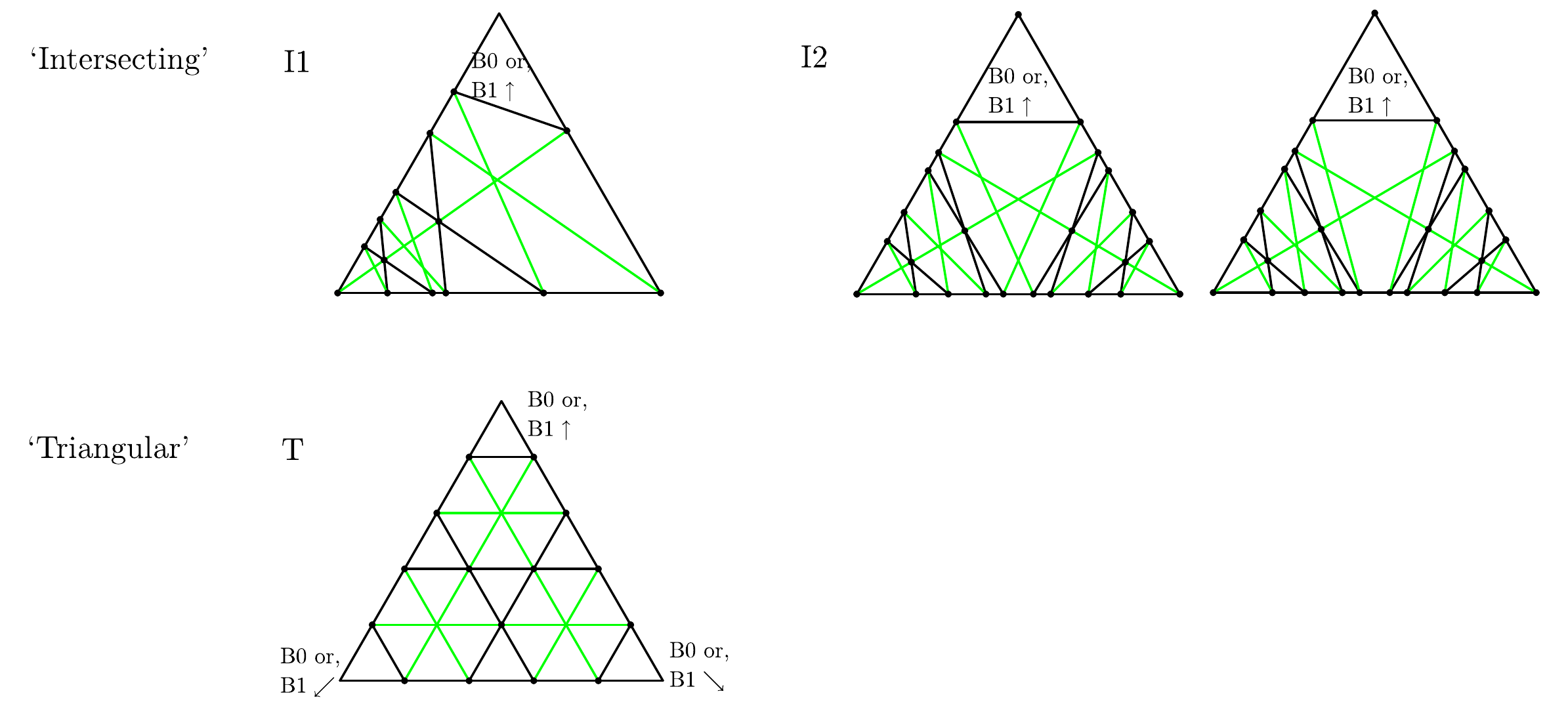}
\caption{Types of \tbas}
\label{structuresFigure}
\end{figure}

We call an arrangement satisfying conditions \textbf{(i)} and \textbf{(ii)} a \emph{triangle blocking arrangement}. If a \tba~additionally satisfies the condition \textbf{(iii)}, then we call it a \emph{strong triangle blocking arrangement}.

Given that there is such a strong structure theorem in this setting, it is plausible that an extremal result could hold. Recall that Theorem~\ref{blockingTheorem} resembles the Sylvester-Gallai theorem, which has its extremal version in the following theorem of Green and Tao. For a given set of points in the plane, we say that a line is \emph{ordinary} if it passes through exactly two points in the set.

\begin{theorem}[Green and Tao~\cite{GreenTao}] \label{GTord}There is an $n_0$ such that, whenever we have $n \geq n_0$ points in the plane that span at most $\frac{n}{2}$ ordinary lines, there is a cubic curve containing the given points.\end{theorem}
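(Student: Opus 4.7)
The plan is to follow the strategy of Green and Tao, which combines incidence-geometric inequalities with the algebraic theory of cubic curves. First I would establish a quantitative stability statement: starting from the classical Melchior inequality (obtained by applying Euler's formula to the projective dual arrangement), which gives at least $3$ ordinary lines, and then the Csima--Sawyer strengthening to roughly $6n/13$ ordinary lines, I would seek to understand which configurations come close to the conjectural bound $n/2$. The idea is to run an induction on $n$ while analyzing what happens when we delete a point: a point $p$ in the configuration lies on some number of ordinary lines, and removing it decreases the count in a controlled way, so if the full configuration has at most $n/2$ ordinary lines, many individual points $p$ must themselves lie on very few ordinary lines, forcing strong local collinearity structure.

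Second, I would use this local structure to extract a global algebraic object. The model extremal configurations are due to B\"or\"oczky: a regular $2m$-gon together with $m$ points at infinity, which lies on the union of a conic and the line at infinity. The key claim is that whenever the number of ordinary lines is at most $n/2$, the configuration must similarly concentrate on a cubic curve (possibly reducible). To prove this, I would apply the Cayley--Bacharach theorem in the following guise: if there are sufficiently many collinear triples among the $n$ points arranged in a pattern compatible with the group law on some cubic, then all the points satisfy a single cubic polynomial equation. Concretely, find a small subset of at most nine points that determines a pencil of cubics, and use the abundance of collinear triples to force every remaining point to lie on a fixed cubic in this pencil.

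Third, a bootstrap closes the argument: once we know that at least $n - O(1)$ points lie on a cubic $C$, we deduce that every remaining point $p$ must also lie on $C$, because otherwise lines from $p$ to points of $C$ would produce too many ordinary lines, contradicting the hypothesis $n/2$. This step relies on a Bezout-type bound, since a line meets $C$ in at most three points, so $p$ creates an ordinary line with any point of $C$ whose connecting line is not a tangent or a secant through two further points of $C$.

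The main obstacle, as in the Green--Tao proof, will be the case analysis for reducible cubics, especially the \emph{conic plus line} case modeling B\"or\"oczky's example and the \emph{three concurrent or parallel lines} case; in these situations, the group structure degenerates and the stability estimates are much more delicate. A secondary obstacle is obtaining the required constant $n_0$ in a genuinely effective form, which demands quantitative versions of the incidence inequalities strong enough to survive the induction.
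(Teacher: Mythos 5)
The paper does not actually prove this statement: Theorem~\ref{GTord} is quoted from Green and Tao with a citation, and the only in-paper material about its proof is the five-step summary in the concluding remarks (dualize, Melchior's inequality, extract ``triangular structure'', classify that structure as a hexagonal grid, apply Chasles' theorem). So there is no proof of the paper's own to compare yours against; your proposal has to be judged on its own as an argument, and as such it is a roadmap of the Green--Tao strategy rather than a proof. The broad outline you give is the right one, but the load-bearing steps are missing or, as stated, do not work.

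Concretely: the induction-with-deletion step is not sound as described. Removing a point $p$ does not decrease the ordinary-line count in a controlled way --- every $3$-point line through $p$ becomes ordinary after deletion, and every ordinary line through $p$ disappears --- so from ``the whole configuration spans at most $n/2$ ordinary lines'' you cannot conclude that many individual points lie on few ordinary lines; indeed in the B\"or\"oczky examples every point lies on roughly one ordinary line and deletion is badly behaved. What Green and Tao actually do at this stage is entirely dual: Melchior's inequality forces almost all vertices of the dual line arrangement to be triple points and almost all faces to be triangles, from which they extract large regions carrying the ``triangular structure'', and the classification of that structure as a hexagonal grid (the analogue of the Classification Lemma in the concluding remarks of this paper) is the hard combinatorial core. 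Your second step then assumes exactly what this classification provides: the Cayley--Bacharach/Chasles argument needs the collinear triples to be organized compatibly with the group law on a cubic, and without the grid structure there is no reason a pencil of cubics through nine of the points should capture the rest. The bootstrap in your third step is fine in spirit (a point off the cubic generates $\gg 1$ ordinary lines to the $n-O(1)$ points on it, by Bezout), but it only applies once the structure theorem is in hand. So the proposal is a correct description of the intended architecture with a genuine gap where the structure theorem --- the analogue of this paper's own classification theorem --- should be.
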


With this in mind, we formulate the following conjecture.

\begin{conjecture} Suppose that $P$ is a set of $n$ points in the plane in general position, and let $B$ be a set of blocking points for $P$. If $|B| = n$, then $P \cup B$ lie on a cubic curve.\end{conjecture}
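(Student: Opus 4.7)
The plan is to combine the projective duality already implicit in the proof of Theorem~\ref{blockingTheorem} with the classification theorem (Theorem~\ref{informalClassification}). First, I would pass to the dual configuration: each initial point of $P$ becomes a line, each blocking point of $B$ becomes a line, and each line spanned by two points of $P$ becomes a point at which two duals of $P$-points meet while a dual of a $B$-point passes through. After restricting to a suitable triangular region $T$ bounded by three lines in sufficiently general position with respect to the rest of the arrangement, this yields a \tba{} in the sense of the paper, with $\mathcal{S}$ coming from $P$ and $\mathcal{B}$ coming from $B$.

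Second, I would verify that the extremal hypothesis $|B|=n$ forces the local concurrency condition \textbf{(iii)}, so that the arrangement is in fact a strong \tba{}. The heuristic is a tight double count: the numerical bound $|B|\geq n$ comes from charging each blocking point with at most $\lfloor n/2\rfloor$ lines of $P$, and any failure of the concurrency condition inside a minimal $\overline{\mathcal{S}}$-region (a local pentagon $v_1,v_2,v_3,v_4,v_5$ at which $l(v_1v_2)$, $\beta(v_3)$ and $l(v_4v_5)$ fail to concur even though $\beta(v_3)$ crosses $R$) should produce either a blocking point whose charge falls strictly below $\lfloor n/2\rfloor$, or an additional blocking point unaccounted for in the count. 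In either case the total number of blocking points would exceed $n$, contradicting $|B|=n$.

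Third, once condition \textbf{(iii)} is in force, Theorem~\ref{informalClassification} reduces the problem to checking the finite list of structures displayed in Figure~\ref{structuresFigure}. For each listed structure, I would read off the corresponding primal configuration and verify by direct inspection that $P\cup B$ lies on a (possibly reducible or degenerate) cubic curve: in every case one obtains a conic together with a line, or a union of three lines, mirroring the regular $n$-gon example mentioned in the introduction, where the initial points lie on a circle and the blocking points lie on the line at infinity. Since the three lines at the boundary of $T$ are essentially free, one has enough flexibility to arrange that the cubic produced from each structure passes through all the primal points and blocking points simultaneously.

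The main obstacle I expect is the second step: extracting the local geometric condition \textbf{(iii)} from the purely numerical extremal hypothesis $|B|=n$ in a clean way. The condition is local and geometric, while $|B|=n$ is global and purely arithmetic, so one needs a careful charging scheme that tracks not only how many pairs each blocking point covers, but also how local non-concurrency propagates into a genuine surplus somewhere else in the arrangement. Such a scheme is effectively a quantitative refinement of the counting argument of Ackerman, Buchin, Knauer, Pinchasi and Rote~\cite{magic}, and making it robust enough to handle all configurations permitted by \textbf{(i)} and \textbf{(ii)} is where I would expect the bulk of the work to lie.
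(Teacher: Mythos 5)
The statement you are addressing is stated in the paper only as a conjecture; the paper contains no proof of it, so the only question is whether your proposal is internally sound. It is not, and the gaps are not confined to the second step you flag. Already the first step is problematic: conditions \textbf{(i)} and \textbf{(ii)} in the definition of a \tba{} require a \emph{unique} blocking segment through each intersection of initial segments, and a \emph{unique} second initial segment through each initial--blocking intersection. In the proof of Theorem~\ref{blockingTheorem} these uniqueness properties come from the tightness of the count when $|B|=n-1$ and $n$ is even (each blocking point must then lie on exactly $n/2$ spanned lines and each spanned line must contain exactly one blocking point). When $|B|=n$ the inequality $n\lfloor n/2\rfloor\ge\binom{n}{2}$ has a surplus of order $n$, so a spanned line may carry two blocking points and a blocking point may be underused; neither uniqueness condition is automatic, and without them the classification theorem simply does not apply to the dual arrangement. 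Your second step is, as you concede, only a heuristic; note also that in the paper the concurrency condition is not extracted by a charging argument but proved geometrically (the lemma on minimal $\mathcal{L}$-regions all of whose vertices are internally blocked), and that proof again depends on the parity lemma and on uniqueness of blocking lines, both in doubt here.

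The third step is contradicted by the paper itself: the concluding remarks state explicitly that not all of the seven classified types come from duals of points on cubic curves. So even granting the classification, one cannot verify "by direct inspection" that each listed structure yields a cubic in the primal; the types that do not arise from cubics would have to be ruled out by some further argument exploiting the hypothesis $|B|=n$, and that exclusion is precisely where the substance of the conjecture lies. As it stands, the proposal is an outline of a plausible strategy (and indeed the one the paper itself gestures at via the Green--Tao analogy), not a proof.
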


We postpone the discussion of the connection between proof of Theorem~\ref{GTord} and the classification theorem (Theorem~\ref{informalClassification}) we prove here to the concluding remarks. There we also discuss why the condition \textbf{(iii)} is necessary in the classification theorem.

The plan of the paper is as follows. In the next section we describe the classification theorem. Then, in Section~\ref{sectionDeduct}, we see how to deduce Theorem~\ref{blockingTheorem} from the classification theorem. In Section~\ref{sectionClassificationProof}, we prove the classification theorem. This is actually the main part of the paper.

\section{Detailed description of the structural theorem}\label{sectionDesc}

\hspace{12pt}Before stating the classification result, we first need to introduce some terminology. Recall that \emph{\tba}~is a triple $\Delta = (T, \mathcal{S}, \mathcal{B})$ consisting of a triangle $T$, with vertices $x_1, x_2$ and $x_3$, a two collections of segments $\mathcal{S}$ and $\mathcal{B}$ such that the endpoints of each segment lie on the boundary of $\partial T$ (possibly coinciding with some vertex $x_i$), and no interior point of a segment lies on the boundary $\partial T$, and the following intersection condition is satisfied: for every pair of segments in $\overline{\mathcal{S}} \colon= \mathcal{S} \cup \{x_1 x_2, x_2 x_3, x_3 x_1\}$, except possibly the pairs of sides of $T$, if they intersect, there is a unique segment in $\mathcal{B}$ that passes through their intersection, and for every intersecting pair of segments, where one segment is in $\overline{\mathcal{S}}$ and the other in $\mathcal{B}$, there is a unique second segment in $\overline{\mathcal{S}}$ that passes through their intersection. We call the elements of $\overline{\mathcal{S}}$ the \emph{initial segments}, the elements of $\mathcal{S}$ the \emph{proper initial segments} when we have to distinguish them from $\mathcal{S}$, and the elements of $\mathcal{B}$ the \emph{blocking segments}. Furthermore, if $x, y$ are two points on a segment $s \in \overline{\mathcal{S}} \cup \mathcal{B}$, we also say that $xy$ is \emph{initial segment} if $s \in \overline{\mathcal{S}}$, that $xy$ is \emph{proper initial segment} if $s \in \mathcal{S}$, and that $xy$ is \emph{blocking segment} if $s \in \mathcal{B}$. We refer to intersections of initial segments as \emph{vertices} in $T$. We will write $\beta(v)$ for the unique blocking segment through the vertex $v$, and in general, for any two points $x,y$ we write $l(xy)$ for the line through $x$ and $y$.\\
\indent Given a triangle $T'$, whose vertices lie in $T$ (possibly on the edges of $T$), and whose edges are subsets of segments in $\overline{\mathcal{S}}$, we define $\overline{\mathcal{S}_{T'}} \colon= \{T' \cap s\colon s \in \overline{\mathcal{S}}\}$, $\mathcal{B}_{T'} \colon= \{T' \cap s\colon s \in \mathcal{B}\}$ and $\mathcal{S}_{T'} \colon= \overline{\mathcal{S}_{T'}} \setminus \{\text{edges of }T'\}$. We say that $\Delta' = (T', \mathcal{S}_{T'}, \mathcal{B}_{T'})$ is a \emph{sub-\tba}~of $\Delta$, \emph{induced by} $T'$. One can check that sub-\tba~is itself a \tba.\\

We now define some special types of \tbas. Let us stress that these do not include all possible \tbas. An example not included in definitions is shown in the Figure~\ref{TBAnotInDef}.\\[6pt] 
\begin{figure}
\begin{center}
\includegraphics[width = 0.7\textwidth]{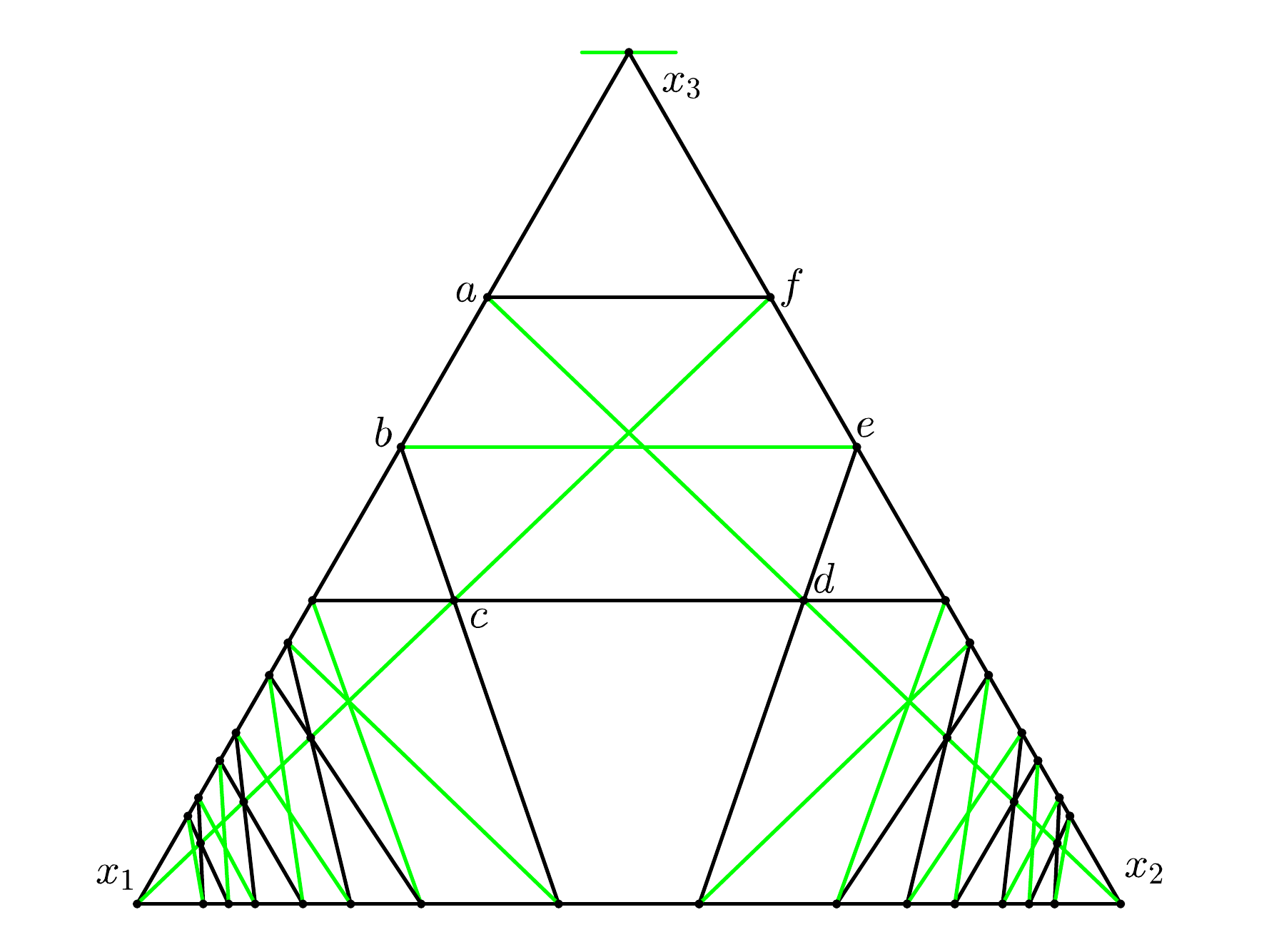}
\caption{Example of a~\tba~not among definitions}
\label{TBAnotInDef}
\end{center}
\end{figure}
\noindent\textbf{Basic types }$\tbty{B}{0}, \tbty{B}{1}, \tbty{B}{2}, \tbty{B}{3}$\textbf{.} We say a \tba~$\Delta = (T, \mathcal{S}, \mathcal{B})$ is of type $\tbty{B}{0}$ if $\mathcal{S} = \mathcal{B} = \emptyset$.\\ 

We say a \tba~$\Delta = (T, \mathcal{S}, \mathcal{B})$ is of type $\tbty{B}{1}$ if it has the following structure. There is an ordering $y_1, y_2, y_3$ of $x_1, x_2, x_3$ such that the vertices of $\mathcal{S} \cup \mathcal{B}$ are $v_1, v_2, \dots,$ $v_n \in y_1y_2$ and $u_1, u_2, \dots,$ $u_n \in y_1y_3$, with ordering $y_1, v_1, v_2, \dots,$ $v_n, y_2$ on $y_1y_2$ and ordering $y_1, u_1, u_2, \dots,$ $u_n, y_3$ on $y_1y_3$, such that $\mathcal{S} = \{v_1u_1, v_2u_2, \dots,$ $v_nu_n\}$ and $\mathcal{B} = \{v_1 u_2,$ $v_2 u_1,$ $v_3u_4,$ $v_4u_3, \dots,$ $v_{t-1} u_t,$ $v_t u_{t-1}\}$, where $t = n$ if $n$ is even, and $t = n+1$, if $n$ is odd, and in that case, we set $v_{n+1} = y_2, u_{n+1} = y_3$. We say that $y_1$ is the \emph{first vertex} of $\Delta$. Also, when $n$ is even, we say that $\Delta$ is \emph{even}, and if $n$ is odd, we say that $\Delta$ is \emph{odd}.\\

We say a \tba~$\Delta = (T, \mathcal{S}, \mathcal{B})$ is of type $\tbty{B}{2}$ if it has the following structure. There is an ordering $y_1, y_2, y_3$ of $x_1, x_2, x_3$ such that the vertices of $\mathcal{S} \cup \mathcal{B}$ are $v_1, v_2, \dots, v_n \in y_1y_2$, then $u_1, u_2, \dots,$ $u_n,$ $u'_m, \dots,$ $u'_1 \in y_1y_3$ and $w_1, w_2, \dots,$ $w_m \in y_3y_2$, with ordering $y_1, v_1, v_2, \dots,$ $v_n, y_2$ on $y_1y_2$, ordering $y_1, u_1, u_2, \dots,$ $u_n, u'_m, \dots,$ $u'_1, y_3$ on $y_1y_3$, and ordering $y_3, w_1, \dots,$ $w_m, y_2$ on $y_3y_2$, and $m$ and $n$ are even. The segments are $\mathcal{S} = \{v_1u_1, v_2u_2, \dots,$ $v_nu_n,$ $u'_1w_1, \dots,$ $u'_mw_m\}$ and $\mathcal{B} = \{v_1 u_2, v_2 u_1,$ $v_3u_4, v_4u_3, \dots,$ $v_{n-1} u_n, v_n u_{n-1}\}$ $\cup$ $\{u'_1w_2, u'_2w_1, \dots,$ $u'_nw_{n-1}, u'_{n-1}w_n\}$.\\

We say a \tba~$\Delta = (T, \mathcal{S}, \mathcal{B})$ is of type $\tbty{B}{3}$ if it has the following structure. There is an ordering $y_1, y_2, y_3$ of $x_1, x_2, x_3$ such that the vertices of $\mathcal{S} \cup \mathcal{B}$ are $v_1, \dots, v_k,$ $v'_l, \dots, v'_1 \in y_1y_2$, $w_1, \dots, w_l,$ $w'_m, \dots,$ $w'_1 \in y_2y_3$, $u_1, \dots, u_m,$ $u'_k, \dots,$ $u'_1 \in y_3y_1$, appearing in orders $y_1, v_1, \dots, v_k,$ $v'_l, \dots, v'_1, y_2$ on $y_1y_2$, $y_2, w_1, \dots, w_l,$ $w'_m, \dots, w'_1, y_3$ on $y_2y_3$, and $y_3, u_1, \dots, u_m,$ $u'_k, \dots, u'_1, y_1$ on $y_3y_1$, and $k,l,m$ are of the same parity.\\
\indent When $k,l,m$ are even, then the segments are $\mathcal{S} = \{v_1u'_1, \dots,$ $v_ku'_k\} \cup$ $\{w_1v'_1, \dots,$ $w_lv'_l\} \cup$ $\{u_1w'_1, \dots,$ $u_mw'_m\}$ and $\mathcal{B} = \{v_1u'_2,$ $v_2u'_1, \dots,$ $v_{k-1}u'_k, v_ku'_{k-1}\}$ $\cup$ $\{w_1v'_2, w_2v'_1, \dots,$ $w_{l-1}v'_l, w_l v'_{l-1}\}$ $\cup$ $\{u_1w'_2, u_2w'_1, \dots,$ $u_{m-1}w'_m, u_m w'_{m-1}\}$.\\
\indent When $k,l,m$ are odd, then the segments are $\mathcal{S} = \{v_1u'_1, \dots,$ $v_ku'_k\}$ $\cup$ $\{w_1v'_1, \dots,$ $w_lv'_l\}$ $\cup$ $\{u_1w'_1, \dots,$ $u_mw'_m\}$ and $\mathcal{B} = \{v_1u'_2,$ $v_2u'_1, \dots,$ $v_{k-1}u'_{k-2},$ $v_{k-2}u'_{k-1}\}$ $\cup$ $\{w_1v'_2, w_2v'_1, \dots,$ $w_{l-1}v'_{l-2},$ $w_{l-2} v'_{l-1}\}$ $\cup$ $\{u_1w'_2,$ $u_2w'_1, \dots,$ $u_{m-1}w'_{m-2},$ $u_{m-2} w'_{m-1}\}$ $\cup$ $\{v_kw'_m,$ $v'_lu_m, w_l u'_k\}$.\\

\noindent\textbf{Intersecting types }$\tbty{I}{1}, \tbty{I}{2}$\textbf{.} We say a \tba~$\Delta = (T, \mathcal{S}, \mathcal{B})$ is of type $\tbty{I}{1}$ if it has the following structure. There is an ordering $y_1, y_2, y_3$ of $x_1, x_2, x_3$ and there are vertices $z_2 \in y_1y_2, z_3 \in y_1y_3$ such that
\begin{enumerate}
\item The segment $z_2z_3$ is in $\mathcal{S}$, and the sub-\tba in the triangle $T' \colon= y_1z_2z_3$ is of type $\tbty{B}{0}$ or $\tbty{B}{1}$, with $y_1$ as the first vertex and it is even.
\item There are vertices $v_1, v_2, \dots, v_{2k}$ on $y_2z_2$, appearing in that order from $y_2$ to $z_2$, and there are vertices $u_1, u_2, \dots, u_{2k}$ on $y_2y_3$, appearing in that order from $y_2$ to $y_3$, such that
\[\mathcal{S} \setminus \overline{\mathcal{S}_{T'}} = \{u_1v_2, u_2v_1, u_3v_4, u_4v_3, \dots, u_{2k - 1}v_{2k}, u_{2k}v_{2k-1}\}\]
and
\[\begin{split}\mathcal{B} \setminus \mathcal{B}_{T'} = &\{u_1 v_1, u_2v_3, u_3v_2, u_4v_5, u_5v_4, \dots, u_{2k-2}v_{2k-1}, u_{2k-1}v_{2k-2}\} \\\hspace{1cm}&\cup \{y_2z_3, y_3 v_{2k}, z_2u_{2k}\}.\end{split}\]
\item For $i=1,2, \dots, k$, the segments $u_{2i-1}v_{2i}, u_{2i}v_{2i-1}, y_2z_3$ are concurrent. Let $p_i$ be their intersection point. The intersections between $\overline{\mathcal{S}}$ and $\mathcal{B}$, and the intersections of pairs of segments in $\overline{\mathcal{S}}$ are either on $\partial T$, or the points $p_i$.
\end{enumerate} 

We say a \tba~$\Delta = (T, \mathcal{S}, \mathcal{B})$ is of type $\tbty{I}{2}$ if it has the following structure. There is an ordering $y_1, y_2, y_3$ of $x_1, x_2, x_3$ and there are vertices $z_2 \in y_1y_2, z_3 \in y_1y_3$ such that
\begin{enumerate}
\item The segment $z_2z_3$ is in $\mathcal{S}$, and the sub-\tba~in the triangle $T' \colon= y_1z_2z_3$ is of type $\tbty{B}{0}$ or $\tbty{B}{1}$, with $y_1$ as the first vertex and it is even.
\item The vertices from $y_2$ to $z_2$ are $u_1, u_2, \dots, u_{2k}$, in that order, from $y_2$ to $y_3$ are $v_1, v_2, \dots, v_{2k}, w_{2l}, w_{2l-1}, \dots, w_1$, in that order, and from $y_3$ to $z_3$ are $t_1, t_2, \dots, t_{2l}$, in that order. The segments of $\Delta$ are
\[\begin{split}\mathcal{S} \setminus \overline{\mathcal{S}_{T'}} = &\{u_1 v_2, u_2 v_1, \dots, u_{2k-1}v_{2k}, u_{2k}v_{2k-1}\} \\
&\hspace{1cm}\cup \{w_1t_2, w_2t_1, \dots, w_{2l-1}t_{2l}, w_{2l}t_{2l-1}\}\end{split}\]
and
\[\begin{split}\mathcal{B} \setminus \mathcal{B}_{T'} = &\{u_1 v_1, u_2v_3, u_3v_2,\dots, u_{2k-2}v_{2k-1}, u_{2k-1}v_{2k-2}\}\\
&\hspace{1cm}\cup\{w_1 t_1, w_2t_3, w_3t_2, \dots, w_{2k-2}t_{2k-1}, w_{2k-1}t_{2k-2}\}\\
&\hspace{1cm} \cup \{u_{2k}y_3, t_{2l}y_2\} \cup \mathcal{B}',\end{split}\]
where $\mathcal{B}' = \{v_{2k}z_3, w_{2l} z_2\}$ or $\mathcal{B}' = \{v_{2k}z_2, w_{2l} z_3\}$. 
\item For $i=1,2, \dots, k$, the segments $u_{2i-1}v_{2i}, u_{2i}v_{2i-1}, y_2t_{2l}$ are concurrent, at point $p_i$, and for $i=1,2, \dots, l$, the segments $w_{2i-1}t_{2i}, w_{2i}t_{2i-1}, y_3u_{2k}$ are concurrent, at point $q_i$. The intersections between segments in $\overline{\mathcal{S}}$ and $\mathcal{B}$, and the intersections of pairs of segments in $\overline{\mathcal{S}}$ are either on $\partial T$, or the points $p_i$ and $q_i$.
\end{enumerate}

\noindent\textbf{Triangular type }$\trt$\textbf{.} We say a \tba~$\Delta = (T, \mathcal{S}, \mathcal{B})$ is of type $\trt$ if it has the following structure. There is an integer $k \geq 2$ and there are vertices $u_1, u_2, \dots, u_{2k} \in x_1x_2$, appearing in that order from $x_1$ to $x_2$, vertices $v_1, v_2, \dots, v_{2k} \in x_2x_3$, appearing in that order from $x_2$ to $x_3$, and vertices $w_1, w_2, \dots, w_{2k} \in x_3x_1$, appearing in that order from $x_3$ to $x_1$.
\begin{enumerate}
\item Segments $u_1 w_{2k}, v_1 u_{2k}$ and $w_1 v_{2k}$ belong to $\mathcal{S}$, and writing $T_1 = x_1 u_1 w_{2k},$ $T_2 = x_2 v_1 u_{2k}$ and $T_3 = x_3 w_1 v_{2k}$, each sub-\tba~induced by $T_i$ is of type $\tbty{B}{0}$ or $\tbty{B}{1}$ with $x_i$ as the first vertex, and it is even.
\item The segments are given by
\[\begin{split}\mathcal{S} \setminus \cup_{i=1}^3 \mathcal{S}_{T_i} = \{u_{2i - 1} w_{2k + 2 - 2i}\colon i \in [k]\} &\cup \{v_{2i-1} u_{2k + 2 - 2i}\colon i \in [k]\}\\
 &\cup \{w_{2i-1} v_{2k + 2 - 2i}\colon i \in [k]\}\end{split}\]
and 
\[\begin{split}\mathcal{B} \setminus \cup_{i=1}^3 \mathcal{B}_{T_i} = \{u_{2i} w_{2k + 1 - 2i}\colon i \in [k]\} &\cup \{v_{2i} u_{2k + 1 - 2i}\colon i \in [k]\} \\
&\cup \{w_{2i} v_{2k + 1 - 2i}\colon i \in [k]\}.\end{split}\]
\item For every triple $(a,b,c) \in [2k]^3$ such that not all of $a,b,c$ are even and $a+b+c = 4k + 2$, the triple of segments $u_a w_{2k+1-a}, v_b u_{2k+1 -b}, w_c v_{2k+1-c}$ is concurrent at the point $p_{a,b,c}$. The intersections between segments in $\overline{\mathcal{S}}$ and $\mathcal{B}$, and the intersections of pairs of segments in $\overline{\mathcal{S}}$ are either on $\partial T$, or the points $p_{a,b,c}$.  
\end{enumerate}
We remark that allowing $k = 1$ in the definition of type $\trt$ would actually give type $\tbty{B}{3}$. We keep $\tbty{B}{3}$ as a basic type, as in this case the intersections between initial segments lie on $\partial T$ only, while in the type $\trt$ we insist on having at least one intersection of initial segments that is in the interior of $T$.\\

Classifying all \tbas~currently seems out of reach, however we are able to prove the following. (The assumption \textbf{(A)} below is exactly the assumption \textbf{(iii)} of Theorem~\ref{informalClassification}.) 

\begin{theorem}[A classification theorem for \tbas.]\label{classificationTheorem} Suppose that $\Delta = (T, \mathcal{S}, \mathcal{B})$ is a strong \tba, i.e. a \tba~such that the following assumption \textbf{(A)} holds.\begin{itemize}
\item[\textbf{(A)}] In every minimal $\overline{\mathcal{S}}$-region $R$, for any consecutive vertices $v_1, v_2, v_3, v_4, v_5$ appearing in this order on $\partial R$, we have that, if $l(v_1v_2)$ and $\beta(v_3)$ intersect in $T$, and $\beta(v_3)$ meets the interior of $R$, then $l(v_1v_2), \beta(v_3), l(v_4v_5)$ are concurrent.\end{itemize}
Then $\Delta$ has one of the types among $\tbty{B}{0}, \tbty{B}{1}, \tbty{B}{2}, \tbty{B}{3}, \tbty{I}{1}, \tbty{I}{2}$ and $\trt$.
\end{theorem}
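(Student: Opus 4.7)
The plan is to proceed by strong induction on $|\mathcal{S}|$, combined with a case analysis on how proper initial segments emerge from the three vertices of $T$ and on whether $\mathcal{S}$ has any interior intersections. The base case $\mathcal{S}=\emptyset$ immediately gives type $\tbty{B}{0}$. For the inductive step, for each vertex $x_i$ of $T$ I would consider $\mathcal{S}_i\subseteq\mathcal{S}$, the collection of proper initial segments with endpoints on both edges of $T$ incident to $x_i$; if $\mathcal{S}_i\neq\emptyset$, pick the innermost such segment $s_i$, cutting off a sub-triangle $T_i$ containing $x_i$. The sub-\tba~induced by $T_i$ is again a strong \tba~(condition \textbf{(A)} is inherited because minimal $\overline{\mathcal{S}_{T_i}}$-regions of $T_i$ are also minimal $\overline{\mathcal{S}}$-regions of $T$), so by the inductive hypothesis it has one of the listed types. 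This will let me peel off corners and analyze only the remaining ``collar''.

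The next step is to classify the collar region between the $T_i$'s, using condition \textbf{(A)}. I would first dispose of the case of no interior intersections: here the minimal $\overline{\mathcal{S}}$-regions are (combinatorially) quadrilaterals obtained from the pencils, and condition \textbf{(A)} applied to consecutive vertices $v_1,v_2,v_3,v_4,v_5$ on such a quadrilateral's boundary forces $l(v_1v_2),\,\beta(v_3),\,l(v_4v_5)$ to be concurrent, which (by a direct geometric check) forces the ``criss-cross'' pairing of blocking segments as described in $\tbty{B}{1},\tbty{B}{2},\tbty{B}{3}$, depending on how many $\mathcal{S}_i$ are non-empty. In the inductive step this also forces the inductively obtained type inside each $T_i$ to be compatible (in the single-vertex case, the sub-type must be $\tbty{B}{0}$ or an even $\tbty{B}{1}$; otherwise the blocking segments crossing $s_i$ would conflict with \textbf{(A)} in the minimal region just outside $T_i$).

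If there is at least one interior intersection, I would pick an intersection point $p$ of two segments in $\overline{\mathcal{S}}$ and study the minimal regions meeting at $p$, together with $\beta(p)$. Condition \textbf{(A)}, applied iteratively along neighbouring minimal regions around $p$, propagates concurrency: once a triple of initial segments meets at an interior point, it forces the next triple on either side to be concurrent as well (with the same auxiliary line, which must be one of the edges of a smaller sub-triangle $y_1z_2z_3$ or one of the sides of $T$). I would then argue that the collection of interior concurrency points aligns in one, two, or three families, parametrized by either one fixed line $y_2z_3$ (yielding $\tbty{I}{1}$), two such lines $y_2t_{2l}$ and $y_3u_{2k}$ (yielding $\tbty{I}{2}$), or three mutually meeting pencils indexed by $[2k]^3$ with $a+b+c=4k+2$ (yielding $\trt$). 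The sub-triangle $T'=y_1z_2z_3$ in the first two cases is then of type $\tbty{B}{0}$ or even $\tbty{B}{1}$ by the peeled-off-corner argument.

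The main obstacle I anticipate is the propagation step in the interior-intersection case: one must show that the concurrency forced by \textbf{(A)} at one minimal region drags with it the entire combinatorial pattern described in clauses 2 and 3 of the $\tbty{I}{1}, \tbty{I}{2},\trt$ definitions, including the precise parity constraints (why $m$ and $n$ must be even, why the index $4k+2$ appears in $\trt$). The key technical lemma is likely the statement that whenever two proper initial segments cross inside $T$, the blocking segment through their intersection is forced (via repeated use of \textbf{(A)} on the minimal regions along each side of the crossing) to pass through a specific vertex of $T$ or a specific earlier-identified vertex $z_i$; this ``pencil-locking'' lemma is what collapses the otherwise vast case analysis into the seven listed types and rules out configurations such as the one in Figure~\ref{TBAnotInDef}.
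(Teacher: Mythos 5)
Your high-level skeleton (induction on the size of the arrangement, applying the inductive hypothesis to sub-\tbas~cut off near the corners of $T$, and splitting according to whether initial segments cross) is the same as the paper's, and your observation that assumption \textbf{(A)} is inherited by sub-\tbas~is correct and is used implicitly throughout the paper. But the proposal stops exactly where the real work begins, so there is a genuine gap. First, a concrete misstep: the ``innermost'' segment $s_i$ at a corner is not well defined, because proper initial segments with endpoints on the two edges at $x_i$ may cross one another -- this is precisely what happens in types $\tbty{I}{1}$, $\tbty{I}{2}$ and $\trt$ -- so neither of the two corner triangles they cut off contains the other; relatedly, in the crossing-free case the minimal $\overline{\mathcal{S}}$-regions are not all quadrilaterals (types $\tbty{B}{2}$ and $\tbty{B}{3}$ have a central pentagonal, respectively hexagonal, minimal region). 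Second, and more seriously, everything that actually forces the seven types is left as an assertion. Assumption \textbf{(A)} alone cannot ``force the criss-cross pairing'' of blocking segments: \textbf{(A)} only constrains concurrencies and says nothing about which vertices of a minimal region are internally blocked. The paper needs, in addition, the parity statement (Lemma~\ref{evenInternalBlocks}: every region has an even number of internally blocked vertices), which is invoked at essentially every step of Corollary~\ref{segmentsCorollary} and Propositions~\ref{FinalInternalBlocks} and~\ref{DisjointSegmentsProp}, and which your proposal never mentions.

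Third, your ``pencil-locking lemma'' -- that a crossing of two initial segments forces, by propagating \textbf{(A)} around the crossing, the blocking segment through it to pass through a distinguished vertex -- is only conjectured (``is likely the statement that\dots''), and it is not what the paper proves nor does it obviously suffice. The paper's actual route is a chain of forbidden-structure results proved with the $(n-1)$-\ct: Propositions~\ref{ForbStruct1}, \ref{ForbStruct2} and~\ref{ForbStruct3} show that two initial segments joining given pairs of edges can only cross when the blocking segments at the corners of $T$ sit in one specific pattern (Lemma~\ref{triangularBlocksLemma}); Proposition~\ref{triangleCoversProp}, Corollary~\ref{triangleS3} and Proposition~\ref{finalS3prop} then produce type $\trt$ by gluing three corner sub-\tbas; Lemmas~\ref{S12basicintersectionLemma}--\ref{S12finalintersectionLemma}, Corollary~\ref{segmentsCorollary} and Proposition~\ref{FinalInternalBlocks} pin down $\tbty{B}{1}$, $\tbty{I}{1}$, $\tbty{I}{2}$ when some corner of $T$ is internally blocked; and Proposition~\ref{DisjointSegmentsProp} settles the crossing-free case. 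Each of these is a nontrivial case analysis combining the induction hypothesis, the parity lemma and \textbf{(A)}; none of it follows from the propagation heuristic you describe, and you yourself flag this step as the main obstacle without resolving it. As it stands the proposal is a plausible plan whose decisive steps -- including the parity constraints in the definitions of $\tbty{I}{1}$, $\tbty{I}{2}$, $\trt$ -- remain unproved.
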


\section{Deducing Theorem~\ref{blockingTheorem} from the Classification theorem}\label{sectionDeduct}

\hspace{12pt}In this section we prove Theorem~\ref{blockingTheorem}. Immediately, we move to the dual, where the theorem has the following formulation. For $n \geq 4$, we say that a pair of disjoint sets $(\mathcal{L}, \mathcal{B})$ of lines in $\mathbb{P}^2 = \mathbb{P}^2(\mathbb{R})$ is an \emph{$n$-blocking configuration} if $|\mathcal{L}| = |\mathcal{P}| + 1 = n$, no three lines in $\mathcal{L}$ are concurrent, and for every pair $l_1, l_2$ of lines in $\mathcal{L}$ there is a unique line in $\mathcal{B}$ that passes through $l_1 \cap l_2$. We refer to the lines in $\mathcal{L}$ as the \emph{initial lines}, and to the lines in $\mathcal{B}$ as the blocking lines.

\begin{theorem}\label{dualBlockingTheorem}Let $n \geq 4$ and let $(\mathcal{L}, \mathcal{B})$ be an $n$-blocking configuration. Then $n = 4$.\end{theorem}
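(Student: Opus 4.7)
The plan is to associate to $(\mathcal{L},\mathcal{B})$ a strong \tba\ inside a triangular region and invoke Theorem~\ref{classificationTheorem}. Passing to the affine plane by choosing a line at infinity generic with respect to $\mathcal{L}\cup\mathcal{B}$, I pick three initial lines $l_1,l_2,l_3\in\mathcal{L}$ bounding a triangle $T$, and let $\mathcal{S}$ be the nonempty intersections of $T$ with the remaining $n-3$ initial lines and $\mathcal{B}_T$ the nonempty intersections of $T$ with the blocking lines. I claim $(T,\mathcal{S},\mathcal{B}_T)$ is a strong \tba.

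Condition \textbf{(i)} of the \tba\ definition is immediate: uniqueness of the blocking line through any intersection of two initials is built into the definition of an $n$-blocking configuration, and no three initials being concurrent is just general position of $\mathcal{L}$. Condition \textbf{(ii)} comes from a global double count in $\mathbb{P}^2$. For $b\in\mathcal{B}$, let $k(b)$ denote the number of intersection points of pairs of initial lines lying on $b$; since $b$ meets each of the $n$ initial lines in exactly one point and at most two initials pass through any point, the $n$ crossings distribute as $2k(b)$ double points and $n-2k(b)$ single points, so $k(b)\le n/2$. Counting incidences between pairs and their blockers,
\[
\binom{n}{2}\;=\;\sum_{b\in\mathcal{B}}k(b)\;\le\;(n-1)\lfloor n/2\rfloor.
\]
For $n$ odd this is impossible (the right side is strictly smaller); hence $n$ is even and equality holds, so $k(b)=n/2$ for every $b$. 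This says every meeting of a blocking line with an initial line is in fact an intersection of two initial lines, which is exactly condition \textbf{(ii)} for the arrangement induced on $T$.

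Condition \textbf{(iii)} --- the strong assumption \textbf{(A)} --- is the heart of the reduction. Let $R$ be a minimal $\overline{\mathcal{S}}$-region in $T$ with consecutive boundary vertices $v_1,\ldots,v_5$. By the equality case of the previous paragraph, $\beta(v_3)$ meets initial lines only at intersections of two initials; since no such intersection lies in $\intr R$ (by minimality), if $\beta(v_3)$ enters $\intr R$ it must enter and leave $R$ only through boundary vertices, giving $\beta(v_3)=\beta(v_j)$ for some other boundary vertex $v_j$. A local analysis at the neighborhoods of $v_3$ and $v_j$, again using the tightness of the global count (so no pair of initials is blocked twice and none goes unblocked), then forces the concurrency of $l(v_1v_2),\beta(v_3),l(v_4v_5)$ claimed in \textbf{(iii)}.

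With $(T,\mathcal{S},\mathcal{B}_T)$ identified as a strong \tba, Theorem~\ref{classificationTheorem} pins it down to one of the types $\tbty{B}{0},\tbty{B}{1},\tbty{B}{2},\tbty{B}{3},\tbty{I}{1},\tbty{I}{2},\trt$. Each type prescribes explicit counts of initial and blocking segments inside $T$, and matching these against the global constraints $|\mathcal{L}|=n$, $|\mathcal{B}|=n-1$ --- possibly comparing across several choices of the initial triple $(l_1,l_2,l_3)$ --- forces $n=4$. I expect the main obstacle to be condition \textbf{(iii)}: conditions \textbf{(i)} and \textbf{(ii)} fall out of clean global counting, whereas \textbf{(iii)} must be extracted from the equality case combined with a delicate local analysis of minimal regions --- which is exactly how this ``somewhat artificial'' condition ``develops naturally in the proof'', as remarked in the introduction.
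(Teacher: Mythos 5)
Your overall strategy is the paper's: realize the dual configuration as a strong \tba\ inside triangles bounded by initial lines and then invoke Theorem~\ref{classificationTheorem}. Your verification of \textbf{(i)} and of \textbf{(ii)} via the tight count $\binom{n}{2}=\sum_b k(b)\le (n-1)\lfloor n/2\rfloor$ is correct (the paper leaves exactly this implicit). But the two hard steps are missing. First, condition \textbf{(A)}: you correctly reduce to the statement that if $\beta(v_3)$ enters $\intr R$ then it enters and exits $R$ through vertices, so it is a diagonal $v_3v_j$ of the minimal region; but the required concurrency of $l(v_1v_2)$, $\beta(v_3)$, $l(v_4v_5)$ takes place at a point that in general lies well outside $R$, and no ``local analysis at the neighborhoods of $v_3$ and $v_j$'' can produce it. The paper devotes three lemmas to this: the parity lemma (every $\mathcal{L}$-region has an even number of internally blocked vertices, proved by double counting), the all-or-nothing lemma for minimal regions, and then an induction on the distance $j$ along $\partial R$ in which one applies the parity lemma to the quadrilateral cut out by $l(v_{i-j-2}),l(v_{i-j-1}),l(v_{i+j}),l(v_{i+j+1})$ and to an auxiliary triangle, and uses uniqueness of blockers together with the inductive hypothesis to identify the blocking line at the new corner as $\beta(v_i)$. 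That argument (Lemma~\ref{evenInternalBlocks} and the two lemmas following it) is the real content behind \textbf{(A)}, and your proposal asserts its conclusion rather than proving it.

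Second, the endgame ``matching counts of segments against $|\mathcal{L}|=n$, $|\mathcal{B}|=n-1$'' does not work: the types $\tbty{B}{1},\tbty{I}{1},\tbty{I}{2},\trt$ occur at every size, a single triangle bounded by three initial lines misses every line whose three arcs avoid that region, so no count inside one (or several) such triangles bounds $n$. The paper instead chooses the triple $m_1,m_2,m_3$ so that one of the four regions they bound in $\mathbb{P}^2$ is a \emph{minimal} $\mathcal{L}$-region $S$; its three corners are then externally blocked in $S$, which forces each of the three complementary regions $S_1,S_2,S_3$ to be of type $\tbty{B}{1}$, and the $\tbty{B}{1}$ structure in all three regions simultaneously yields collinearity relations ($a_1,b_r,c_r$ and $a_1,b_r,c_1$ collinear) that force $r=1$, i.e.\ $n=4$. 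Your arbitrary choice of bounding triple and the appeal to counting omit both the crucial choice of a minimal triangle and the cross-region collinearity argument, so the final deduction of $n=4$ is a genuine gap, not just a compressed step.
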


We begin the proof by deducing some structural information about the configuration of lines in $\mathcal{L} \cup \mathcal{B}$, which will enable us to apply Theorem~\ref{classificationTheorem} and deduce Theorem~\ref{dualBlockingTheorem}.\\

By an \emph{$\mathcal{L}$-region}, we mean the closure of any connected subset of $\mathbb{P}^2 \setminus L$, where $L$ is a subset of some lines in $\mathcal{L}$. A \emph{minimal $\mathcal{L}$-region} is the closure of a connected subset of $\mathbb{P}^2 \setminus \cup \mathcal{L}$. For a $\mathcal{L}$-region $R$, we define its \emph{edges} to be the segments of lines in $\mathcal{L}$ that intersect $R$, and \emph{vertices} as the intersections of lines in $\mathcal{L}$ that lie in $R$. Finally, a vertex $v$ of $\mathcal{L}$-region is \emph{internally blocked} if the unique blocking line through $v$ meets the interior of $R$, otherwise, it is \emph{externally blocked}. 

\begin{lemma}\label{evenInternalBlocks} Let $n \geq 4$ and let $(\mathcal{L}, \mathcal{B})$ be an $n$-blocking configuration. Let $R$ be any $\mathcal{L}$-region, not necessarily minimal. Then the number of internally blocked vertices of $R$ is even.\end{lemma}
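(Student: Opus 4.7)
The plan is to analyse, for each blocking line $b\in\mathcal{B}$, the set $V_b$ of vertices of $R$ lying on $b$, and then sum over $b$. A preliminary observation, which I would make first, is that in an $n$-blocking configuration every intersection $b\cap l$ of a blocking line with an initial line is actually a vertex of the $\mathcal{L}$-arrangement: on any $l\in\mathcal{L}$ the $n-1$ intersections with the remaining initial lines are blocked by $n-1$ distinct blocking lines (distinct because each blocking line meets $l$ in a single point), and this exhausts $\mathcal{B}$. Hence $b$ passes through exactly $n/2$ vertices arising from a perfect matching of the $n$ initial lines, and $n$ is even.

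Now I would fix $b\in\mathcal{B}$ with $b\cap\intr(R)\neq\emptyset$ and view $b$ as a topological circle in $\mathbb{P}^2$. The open arcs of $b$ between consecutive vertices each lie in a single cell of the $\mathcal{L}$-arrangement, so either entirely in $\intr(R)$ or entirely outside $R$. Because $\partial R$ bounds the $2$-chain $R$ in $\mathbb{P}^2$, the mod-$2$ intersection number of $b$ with $\partial R$ vanishes, so the number of transitions between ``inside $R$'' and ``outside $R$'' as we traverse $b$ is even. These transitions occur precisely at the vertices of $V_b\cap\partial R$ at which $b$ crosses $\partial R$ transversally, namely the ``Case B'' corners (both initial lines at $v$ bound $R$, with the $R$-sector opposite $v$ along $b$) and the edge-interior vertices of $R$. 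Each pair of consecutive transitions bounds one arc of $b\cap\intr(R)$, so these transversal vertices contribute an even number $2k_b$ to $|V_b|$.

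The remaining vertices of $V_b$ are either interior vertices of $R$ on $b$ (lying in the interior of an arc of $b\cap R$) or ``Case C'' tangential corners of $R$ (isolated points of $b\cap R$, where $b$ passes through $v\in\partial R$ without locally entering $\intr(R)$). Thus $|V_b|\equiv |V_b^{\circ}|+|V_b^{\partial,C}|\pmod{2}$, and the total number of internally blocked vertices is $\sum_{b:b\cap\intr(R)\neq\emptyset}|V_b|$. The crux of the proof is to show this total is even, i.e.\ that the interior-vertex and tangential-corner contributions, summed over all blocking lines meeting $\intr(R)$, combine to an even number. I would establish this by a global double-counting inside $R$, using Euler's formula on the planar subgraph formed by the arcs of $\mathcal{L}\cup\mathcal{B}$ that lie in $R$, together with the degree structure at each vertex (interior vertices have degree $6$, edge-interior vertices degree $4$, Case B corners degree $3$, and Case C corners degree $2$). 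Combining the relation $V-E+F=\chi(R)$ with the evenness of transversal crossings on each $b$ from the previous step, and with the parallel statement that $|V^{\partial,\mathrm{edge}}|$ is even (obtained by applying the same topological argument to an initial line instead of to a blocking line), should pin down the desired parity.

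The hard part will be this final global step: a purely local analysis of a single $b$ does not force $|V_b^{\circ}|+|V_b^{\partial,C}|$ to be individually even, so one has to combine information across all blocking lines meeting $\intr(R)$. The structural observation about blocking configurations, the topological intersection count with $\partial R$, and the classification of vertex types are straightforward ingredients by comparison.
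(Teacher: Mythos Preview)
Your preliminary observation---that in an $n$-blocking configuration every point $b\cap l$ with $b\in\mathcal B$, $l\in\mathcal L$ is already an $\mathcal L$-vertex, so that $n$ is even and each $b$ carries a perfect matching of $\mathcal L$---is correct and is essentially the only nontrivial ingredient one needs. After that, though, the proposal takes a detour that is both incomplete and unnecessary.

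One source of the complication is a misreading of what is being counted. The lemma concerns the \emph{corners} of $R$ (intersections of pairs of lines bounding $R$), not all $\mathcal L$-intersections lying in $R$. Your quantity $\sum_{b}|V_b|$ includes every interior vertex and every edge-interior vertex as well; since the number of interior vertices need not be even, this larger count is not what has to be shown even, and your ``crux'' is set up around the wrong parity statement.

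The second missing point is that an $\mathcal L$-region is convex: in an affine chart avoiding one bounding line it is an intersection of half-planes, so $b\cap R$ is a single segment for every line $b$. This kills your ``Case C'' entirely (if $b$ meets $\intr(R)$ and passes through a corner $v$, then $v$ is an endpoint of the segment $b\cap R$ and $b$ enters $\intr(R)$ there), and it makes interior vertices irrelevant since they are never endpoints. With this in hand your own framework finishes immediately: each $b$ meeting $\intr(R)$ contributes exactly two endpoints on $\partial R$; by your preliminary observation each endpoint is either a corner of $R$ or an edge-interior vertex; the edge-interior vertices total an even number because each initial line not bounding $R$ that meets $R$ contributes exactly two of them; hence the number of corner-endpoints---precisely the internally blocked corners---is even. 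No Euler formula, no homological intersection numbers, no degree bookkeeping.

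The paper's proof packages the same parity slightly differently: it subdivides $R$ into minimal subregions and double-counts internally blocked vertices across the pieces, peeling off the (even) contributions from edge-interior and interior positions to isolate the corners.
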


\begin{proof} We proceed by a double-counting argument. Draw $l \cap R$ for all the initial lines $l$ that meet $R$, partitioning $R$ into minimal $\mathcal{L}$-regions $R_1, R_2, \dots, R_t$. Observe that the total number $N$ of the internally blocked vertices of the regions $R_1, R_2, \dots, R_t$ can be written as $N = N_1 + N_2 + N_3$, where $N_1$ is the number of internally blocked vertices of $R$, $N_2$ is the number of internally blocked vertices which lie on the interiors of edges of $R$ and $N_3$ is the number of internally blocked vertices that lie in the interior of $R$. Our goal is to show that $N_1$ is even.

Every blocking line $b$ that crosses $R$ intersects $\partial R$ at two points, and these contribute to $N$ by 2, and $b$ also passes through some vertices in $\intr R$. But, each such vertex is blocked internally two times by $b$, for some minimal $\mathcal{L}$-regions. Hence, $N$ is even.

Observe that every initial line $l$ that meets $R$, but is not one of its edges, intersects $\partial R$ twice at interiors of edges of $R$. Thus, these two intersections contribute to $N_2$ by 2, and every such intersection is defined by a unique such initial line $l$. This shows that $N_2$ is even.

Finally, every vertex in $\intr R$ is internally blocked twice, so $N_3$ is even, hence $N_1 = N - N_2 - N_3$ is also even, as desired.\qed\end{proof}

\begin{lemma} Let $n \geq 4$ and let $(\mathcal{L}, \mathcal{B})$ be an $n$-blocking configuration. Let $R$ be any minimal $\mathcal{L}$-region. Then either all vertices of $R$ are internally blocked or all vertices of $R$ are externally blocked.\end{lemma}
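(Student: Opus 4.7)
The plan is first to give a clean geometric characterization of internally and externally blocked vertices of $R$, and then to derive a contradiction from assuming that $R$ contains vertices of both types.

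I would begin by noting that every blocking line meets $\partial R$ only at vertices of $R$: if a blocking line $b \in \mathcal{B}$ met the relative interior of an edge $e \subseteq l_e$ of $R$ at a point $p$, the axioms force a second initial line through $p$, making $p$ a vertex of the arrangement in the relative interior of an edge of the minimal region $R$, contradicting minimality. This immediately yields the chord characterization: a vertex $v$ of $R$ is internally blocked if and only if $\beta(v) \cap R$ is a chord $vw$ of $R$ between non-adjacent vertices $v, w$, in which case $\beta(w) = \beta(v)$ by uniqueness and $w$ is also internally blocked (non-adjacency being forced because otherwise $\beta(v) = vw$ would coincide with an initial line). Conversely, if $v$ is externally blocked then $\beta(v) \cap R = \{v\}$: any second vertex $w$ of $R$ on $\beta(v)$ would be either adjacent to $v$ (making $\beta(v)$ initial) or non-adjacent (making $\beta(v)$ cross $\intr R$ along the diagonal $vw$ by convexity, so $v$ would in fact be internally blocked).

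For the main assertion I argue by contradiction. Suppose $R$ has vertices of both types; walking around $\partial R$ produces consecutive vertices $v_i, v_{i+1}$ of opposite types, say $v_i$ internally blocked with $\beta(v_i) = v_i v_j$ a chord, and $v_{i+1}$ externally blocked with $m' := \beta(v_{i+1})$ satisfying $m' \cap R = \{v_{i+1}\}$. Let $l_{i-1,i}$ be the initial line of the edge $v_{i-1} v_i$ and set $p := m' \cap l_{i-1,i}$; by the axioms $p$ is a vertex of the arrangement. The options $p = v_{i-1}$ (excluded by $m' \cap R = \{v_{i+1}\}$), $p = v_i$ (which would give $m' = \beta(v_i)$ by uniqueness, hence $v_{i+1} \in v_i v_j$, impossible since $v_{i+1} \neq v_j$ and $v_{i+1}$ is not on the open chord), and $p$ in the relative interior of $v_{i-1} v_i$ (forbidden by minimality) can all be ruled out, forcing $p$ to lie on $l_{i-1,i}$ strictly outside the edge $v_{i-1} v_i$; the same local analysis constrains $m'$'s other incidences with initial lines bounding $R$.

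Turning this local information into a global contradiction is the main obstacle. My preferred attack is to apply Lemma~\ref{evenInternalBlocks} to the enlarged $\mathcal{L}$-region $R \cup R'$, where $R'$ is the minimal region adjacent to $R$ across the edge $v_i v_{i+1}$ (an $\mathcal{L}$-region obtained by removing the line $l_{i,i+1}$ from $\mathcal{L}$). A direct sector analysis at $v_{i+1}$ shows that one of the lateral rays of $m'$ must enter $\intr R'$, so $v_{i+1}$ flips from externally to internally blocked under this enlargement, while the internally blocked $v_i$ stays internally blocked. Accounting carefully for which other vertices on $\partial R$ and $\partial R'$ change type (and how the newly appearing vertices of $R'$ contribute), one should obtain an odd change in the internal count between $R$ and $R \cup R'$, contradicting Lemma~\ref{evenInternalBlocks} applied to both regions. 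An alternative route would trace $m'$ globally through the arrangement and exploit the induced $1$-factorization of $\mathcal{L}$ (each blocking line pairs the $n$ initial lines via its $n/2$ vertex incidences), but I expect the enlargement-and-parity approach to be cleaner.
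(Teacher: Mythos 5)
Your first two paragraphs are sound but are only preliminaries (the chord characterization and the local sector analysis at $v_{i+1}$ are correct, and indeed your observation that $\beta(v_{i+1})$ enters the cell $R'$ across the edge $v_iv_{i+1}$ is true). The problem is exactly at the step you yourself flag as the main obstacle, and the route you propose there does not close. Apply the parity count honestly to your enlargement: in $R$, among the vertices other than $v_i,v_{i+1}$, the number internally blocked (with respect to $\intr R$) is odd, since $v_i$ is internal and $v_{i+1}$ is not and the total is even by Lemma~\ref{evenInternalBlocks}; symmetrically, in $R'$ the count over the vertices other than $v_i,v_{i+1}$ is also odd, since $v_{i+1}$ is internal in $R'$ and $v_i$ is not. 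But $v_i$ and $v_{i+1}$ are no longer vertices of $R\cup R'$ at all (they sit in the relative interiors of edges of the union), so Lemma~\ref{evenInternalBlocks} for $R\cup R'$ only demands that the count over the remaining vertices, now taken with respect to $\intr R\cup\intr R'$, be even --- and odd plus odd is already even. The only discrepancy between this sum and the union count is the number of vertices whose blocking line misses their own cell's interior but crosses the other cell's interior; blocking lines of vertices of $R$ far from the shared edge can perfectly well cross $\intr R'$ without crossing $\intr R$, you have no control over this set, and the parity lemma merely forces it to be even rather than producing a contradiction. So the claimed ``odd change'' does not materialize, and no amount of careful local accounting at $v_i, v_{i+1}$ fixes this, because the obstruction is global. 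The alternative route you sketch (tracing $m'$ through a $1$-factorization) is not developed enough to assess.

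The paper's proof supplies the missing idea, which is a trapping argument on a small auxiliary region rather than an enlargement. With $u$ internally and $v$ externally blocked consecutive vertices of $R$, and $u',u,v,v'$ consecutive, one takes $p=l(u'u)\cap l(v'v)$ and the $\mathcal{L}$-region $S$ with vertices $u,v,p$ lying across the edge $uv$ from $R$ (here $R$ is not a triangle, since an internally blocked vertex forces $\beta(u)$ to exit through a non-adjacent vertex). In $S$ exactly one of $u,v$ is internally blocked (your own sector analysis gives this), so Lemma~\ref{evenInternalBlocks} forces the apex $p$ to be internally blocked in $S$; but then $\beta(p)$ must leave the triangle $S$ through the relative interior of $uv$ --- it cannot pass through $u$ or $v$, as it would then coincide with the initial line $l(u'u)$ or $l(v'v)$ --- and a blocking line crossing the interior of an edge of the minimal region $R$ creates a vertex there, contradicting minimality. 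The essential point is that the third vertex of $S$ has only one status compatible with parity and its blocking line is then geometrically trapped; this is what your proposal lacks.
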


\begin{proof} Suppose contrary, $R$ has two consecutive vertices $u$ and $v$, such that $u$ is internally blocked, but $v$ is externally blocked. Since a blocking line meets $\intr R$, $R$ cannot be a triangle. Let $u', v'$ be another two vertices of $R$, such that $u', u, v, v'$ are consecutive. Let $p = u'u \cap v'v$. Consider $\mathcal{L}$-region $S$ with vertices $u,v,p$. Inside $S$, among $u$ and $v$, exactly one is internally blocked vertex. Therefore, $p$ is an internally blocked vertex of $S$, with a blocking line $b \in \mathcal{B}$. But $b$ must cross the interior of $uv$, which is a contradiction with the fact that $R$ is a minimal $\mathcal{L}$-region.\qed\end{proof}

\begin{lemma} Let $n \geq 4$ and let $(\mathcal{L}, \mathcal{B})$ be an $n$-blocking configuration. Let $R$ be any minimal $\mathcal{L}$-region, with vertices $v_1, v_2, \dots, v_k$, sorted in the order as they appear on $\partial R$. Suppose that the vertices of $R$ are internally blocked. Then $k$ is even, and for every $i, j \in [k/2]$, the line $v_i v_{i + k/2} \in \mathcal{B}$, and the lines $v_{i - j - 1} v_{i-j}, v_i v_{i + k/2}, v_{i+j} v_{i + j + 1}$ are concurrent, (indices of vertices are taken modulo $k$).  \end{lemma}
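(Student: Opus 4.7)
The plan is to proceed in three stages: first show that blocking lines pair up the vertices of $R$, so that $k$ is even; then use auxiliary $\mathcal{L}$-regions together with Lemma~\ref{evenInternalBlocks} to establish simultaneously the antipodal pairing $v_iv_{i+k/2} \in \mathcal{B}$ and the triple concurrencies.

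\textbf{Pairing of vertices.} The key preliminary observation is that every blocking line $b \in \mathcal{B}$ meets every initial line $l \in \mathcal{L}$ at a vertex of $\mathcal{L}$ (an intersection of $l$ with some other initial line). Indeed, $l$ contains exactly $n-1$ such vertices, each blocked by a distinct blocking line (if two distinct vertices on $l$ were blocked by the same $b$ then $b$ would coincide with $l$), and $|\mathcal{B}| = n-1$, so the correspondence between blocking lines and vertices on $l$ is a bijection. Combining this with minimality of $R$: if $\beta(v_i)$ were to exit $R$ through the interior of an edge $v_mv_{m+1} \subseteq l_m$, the exit point would be a vertex $l_m \cap l_p$ of $\mathcal{L}$ interior to that edge, forcing $l_p$ into $\intr R$, a contradiction. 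So $\beta(v_i)$ exits $R$ at some other vertex $v_{j(i)}$, making $\beta$ a perfect matching on the $k$ vertices, hence $k$ is even.

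\textbf{Antipodal pairing and concurrency.} For fixed $i$ and each $j \in \{1, \ldots, k/2 - 1\}$, consider the sub-arrangement $L_j = \{l_{i-j-1}, l_{i-j}, \ldots, l_{i+j}\}$ of $2j+2$ initial lines carrying a contiguous portion of $\partial R$, and let $S_j$ be the $\mathcal{L}$-region (with respect to $L_j$) containing $\intr R$. One verifies that $S_j$ is a $(2j+2)$-gon whose vertices consist of $v_{i-j}, v_{i-j+1}, \ldots, v_{i+j}$ along one side (these are the vertices of $R$ shared with $S_j$) together with a single apex $p_j = l_{i-j-1} \cap l_{i+j}$ on the opposite side. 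Since $\intr R \subseteq \intr S_j$, the $2j+1$ vertices $v_{i-j}, \ldots, v_{i+j}$ are internally blocked with respect to $S_j$, so by Lemma~\ref{evenInternalBlocks} $p_j$ is internally blocked as well. The crux is then to show $\beta(p_j) = \beta(v_i)$, giving $p_j \in \beta(v_i)$; once this is in hand, specialising to $j = k/2 - 1$ yields $p_{k/2 - 1} = l_{i-k/2} \cap l_{i+k/2-1} = v_{i+k/2}$, so $v_{i+k/2} \in \beta(v_i)$ and uniqueness of blocking lines forces $\beta(v_i) = v_iv_{i+k/2}$, the antipodal pairing. The concurrency for each $j$ is then exactly the statement $p_j \in \beta(v_i) = v_iv_{i+k/2}$, which has just been established.

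\textbf{Main obstacle.} The principal difficulty is the identification $\beta(p_j) = \beta(v_i)$. A priori, $\beta(p_j)$ could coincide with $\beta(v_{i \pm a})$ for some $0 < a \leq j$, or it could exit $S_j$ through the interior of one of the two outer edges $v_{i-j}p_j$ or $v_{i+j}p_j$ (these lie outside $R$, so the minimality of $R$ does not immediately rule this out). Ruling out the first possibility requires induction on $j$, using the concurrencies already established at smaller $j$ to pin down where each $\beta(v_{i \pm a})$ meets $\beta(v_i)$ and so to force $\beta(p_j)$ into the middle position. Ruling out the second possibility requires using the observation about blocking lines meeting initial lines at vertices of $\mathcal{L}$ in the larger arrangement, combined with the pairing structure from the first stage; this is where the bulk of the technical work lies.
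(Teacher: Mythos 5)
Your opening stage is fine, and in fact supplies details the paper leaves implicit: the counting observation that each of the $n-1$ vertices on an initial line is blocked by a distinct blocking line, so that every blocking line meets every initial line at a vertex of the arrangement, is exactly what justifies that blocking lines cross $\partial R$ only at vertices of $R$; together with minimality this gives the matching and the evenness of $k$ correctly. The parity set-up in your second stage is also in the right spirit (the paper does the analogous thing with a quadrilateral region cut out by just the four lines $l_{i-j-2}, l_{i-j-1}, l_{i+j}, l_{i+j+1}$ at each inductive step, rather than your $(2j+2)$-line region $S_j$ containing $R$). But the proposal has a genuine gap precisely where you place the ``main obstacle'': the identification $\beta(p_j)=\beta(v_i)$ is never proved, only described as requiring work, and the mechanism you hint at is misdirected. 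Knowing ``where each $\beta(v_{i\pm a})$ meets $\beta(v_i)$'' is not the relevant information; what kills the possibility $\beta(p_j)=\beta(v_m)$ for $m\neq i$ is knowing where $\beta(v_m)$ meets the two extreme lines $l_{i-j-1}$ and $l_{i+j}$, and that comes from the concurrency statement applied at the \emph{other} base vertices $v_m$ at a smaller level $j'$. Hence the induction cannot be run for a single fixed $i$: it must carry the statement ``$l_{m-j'-1},\beta(v_m),l_{m+j'}$ concurrent'' for all $m$ simultaneously, and then uniqueness of the blocking line through $p_j$ forces $m=i$ (this is exactly how the paper closes its inductive step). Incidentally, your ``second possibility'' is much easier than you suggest: the two outer edges of $S_j$ lie on the lines $l_{i-j-1}$ and $l_{i+j}$, both of which pass through $p_j$, so $\beta(p_j)$ cannot recross them, and it cannot exit through the interior of an $R$-edge by your own counting observation plus minimality of $R$; so all the content is in the first possibility, which is the part left undone.

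A second, smaller gap: the assertion ``one verifies that $S_j$ is a $(2j+2)$-gon with vertices $v_{i-j},\dots,v_{i+j}$ and a single apex $p_j=l_{i-j-1}\cap l_{i+j}$'' is a nontrivial claim about cells of a projective sub-arrangement and is not verified. In any affine chart where $R$ is bounded, the cell of $\{l_{i-j-1},\dots,l_{i+j}\}$ containing $R$ typically wraps through the line at infinity, and one must argue that no middle line contributes a second edge or an extra vertex on the far side and that the far boundary closes up exactly at $l_{i-j-1}\cap l_{i+j}$; without this, the parity count $2j+1$ internally blocked vertices does not single out $p_j$. The paper avoids this issue by working at each step with a region bounded by only four lines, where the combinatorics of the relevant quadrilateral is immediate. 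So the architecture of your argument is sound and close in spirit to the paper's, but as written it omits both the geometric verification of the shape of $S_j$ and, more importantly, the uniqueness/induction-over-all-vertices argument that constitutes the actual proof of the concurrencies.
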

\begin{proof} Let $b_i$ be the blocking line at the vertex $v_i$, and let $l_i = v_i v_{i+1} \in \mathcal{L}$. We first prove that $l_{i - j - 1}, b_i, l_{i + j}$ are concurrent by induction on $j \in \{0, 1, 2, \dots, k/2\}$. Observe that when $j = k/2$, we have that $l_{i - k/2 - 1}, b_i, l_{i + k/2}$ are concurrent. But the lines $l_{i - k/2 - 1} = v_{i - k/2 - 1} v_{i - k/2}$ and $l_{i + k/2} = v_{i + k/2}v_{i + k/2 + 1}$ already meet at $v_{i + k/2}$, which is blocked by $b_{i + k/2}$, so by uniqueness of blocking lines $b_i = b_{i + k/2} = v_i v_{i + k/2}$.\\[3pt]
\indent For the base of induction, when $j = 0$, the lines $l_{i - 1}, b_i, l_{i}$ meet at $v_i$, so the claim holds.\\
\indent Suppose that the claim holds for some $0 \leq j < k/2$, and consider $l_{i - j - 2},$ $b_i,$ $l_{i + j + 1}$. Look at the $\mathcal{L}$-region $S$ formed by lines $l_{i - j - 2}, l_{i - j - 1}, l_{i + j}, l_{i + j + 1}$. By induction hypothesis, the triple of lines $l_{i - j - 1}, b_i, l_{i+j}$ is concurrent with the common point $p_1$. Let $p_2 \colon= l_{i-j-2} \cap l_{i+j}, p_3 \colon= l_{i-j-1} \cap l_{i+j+1}$ and $p_4 \colon= l_{i-j-2} \cap l_{i + j + 1}$, so the vertices of $S$ are precisely $p_1, p_2, p_3$ and $p_4$, and our goal is to show that $p_4 \in b_i$. Look at $\mathcal{L}$-region with vertices $v_{i-j-1}, p_1, p_2$, formed by initial lines $l_{i-j-2}, l_{i-j-1}$ and $l_{i+j}$. For this region, $v_{i-j-1}$ and $p_1$ are externally blocked, so by Lemma~\ref{evenInternalBlocks}, the vertex $p_2$ must be externally blocked as well, therefore $p_2$ is internally blocked in $S$. Likewise, the vertex $p_3$ is internally blocked in $S$, so, since $p_1$ is also internally blocked, by Lemma~\ref{evenInternalBlocks}, the remaining vertex $p_4$ is internally blocked, by some blocking line $b$. But $b$ meets $R$ between vertices $v_{i - j - 1}$ and $v_{i + j}$, so it must contain some vertex $v_l$, $l \in \{i - j, i - j + 1, \dots, i + j - 1\}$. By induction hypothesis, $b_l$ meets $l_{i-j-2}$ at a point other than $p_4$, when $l < i$, and $b_l$ meets $l_{i+j + 1}$ at point other than $p_4$, for $l > i$, proving that $l = i$, as desired.\qed\end{proof}

Having acquired enough structural information about blocking configurations, we are ready to prove Theorem~\ref{dualBlockingTheorem}.

\begin{proof}[Proof of Theorem~\ref{dualBlockingTheorem}.] Observe that since no three lines in $\mathcal{L}$ are concurrent, there are $m_1, m_2, m_3 \in \mathcal{L}$ that define a region which is a minimal $\mathcal{L}$-region (this follows from the fact that any line that crosses a triangle splits that triangle into two regions, one of which is also a triangle). Denote the other $\mathcal{L}$-regions formed by $m_1, m_2$ and $m_3$ by $S_1, S_2$ and $S_3$. Applying Theorem~\ref{classificationTheorem} to the \tbas~induced by $S_1, S_2$ and $S_3$, we see that each of them is of type $\tbty{B}{1}$. Let $x_1 \colon= m_2 \cap m_3, x_2 \colon= m_3 \cap m_1, x_3 \colon= m_1 \cap m_2$ and let $e_1 \colon= m_1 \cap S, e_2 \colon= m_2 \cap S, e_3 = \colon= m_3 \cap S$. Let $a_1, a_2, \dots, a_r$ be the vertices on $m_1 \setminus e_1$, listed from $x_2$ to $x_3$, let $b_1, b_2, \dots, b_s$ be the vertices on $m_2 \setminus e_2$, listed from $x_3$ to $x_1$, and finally let $c_1, c_2, \dots, c_t$ be the vertices of $m_3 \setminus e_3$, listed from $x_1$ to $x_2$. By the definition of type $\tbty{B}{1}$, we have $r = s = t$ and $a_i b_{r + 1-i}, b_i c_{r+1-i}, c_i a_{r+1- i} \in \mathcal{L}$, for $i \in [r]$. However, $a_1$ belongs to the initial lines $a_1 b_r, a_1c_r$ and $m_1$, and $m_1$ is different from $a_1b_r$ and $a_1 c_r$, therefore $a_1b_r = a_1c_r$, making $a_1, b_r, c_r$ collinear. Similarly, $b_r, a_1, c_1$ are collinear, so $c_1, c_r, b_r, a_1$ are collinear. If $r > 1$, then $c_1 \not= c_r$, but $c_1, c_r \in m_3$, making $a_1b_r = m_3$, which is a contradiction. Therefore $r = 1$, so $n=4$, as desired.\qed\end{proof}

\section{Proof of the Classification theorem}\label{sectionClassificationProof}
\hspace{12pt}As the title suggests, this section is devoted to the proof of the Classification theorem. For an integer $n \geq 0$, we say that $n$-Classification theorem holds, if the conclusion of Theorem~\ref{classificationTheorem} holds for all \tbas~$\Delta = (T, \mathcal{S}, \mathcal{B})$ with $|\mathcal{S}| + |\mathcal{B}| \leq n$. We denote the quantity $|\mathcal{S}| + |\mathcal{B}|$ by $|\Delta|$ and call it the \emph{size} of $\Delta$. The argument will be based on induction on $|\Delta|$. Note that 0-Classification theorem holds, as $|\Delta| = 0$ implies that $\mathcal{S} = \mathcal{B} = \emptyset$, so $\Delta$ is of type $\tbty{B}{0}$.\\

Before we proceed with the proof, we need a couple of pieces of notation. Firstly, a segment is \emph{minimal} if there are no other vertices in its interior. Also recall that for a segment $xy$, we write $l(xy)$ for the line that contains the segment, and given a point $p$, write $\beta(p)$ for the blocking segment through $p$. Further, also for a segment $xy$, we write $s(xy)$ for the unique element of $\overline{\mathcal{S}}$ which contains both $x$ and $y$, if it exists. Given a $\overline{\mathcal{S}}$-region $R$, and a vertex $v$ of $R$, we say that $v$ is \emph{internally blocked} in $R$ if $\beta(v)$ passes through the interior of $R$, and otherwise we say that $v$ is \emph{externally blocked} in $R$. In particular, if there are no blocking segments through $x_i$, we say that $x_i$ is externally blocked. (By a vertex of $R$ here, we mean an intersection of initial segments that are edges of $R$, so a vertex on boundary, but in interior of an edge of $R$ is not counted as a vertex when we talk about internally or externally blocked vertices.)\\

We restate Lemma~\ref{evenInternalBlocks} here, which will be crucial to our work. As we shall be using this lemma all the time, we will not refer to it explicitly.

\begin{lemma}Suppose that $\Delta = (T, \mathcal{S}, \mathcal{B})$ is a \tba, and let $R$ be any, not necessarily minimal, $\overline{\mathcal{S}}$-region. Then, the number of internally blocked vertices of $R$ is even.\end{lemma}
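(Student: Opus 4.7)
The plan is to mirror the double-counting argument used in the projective-plane version (Lemma~\ref{evenInternalBlocks}), accounting only for the fact that $\partial T$ now plays a role. Start by drawing inside $R$ every initial segment in $\overline{\mathcal{S}}$ that meets $R$ but is not an edge of $R$; this subdivides $R$ into minimal $\overline{\mathcal{S}}$-regions $R_1, \dots, R_t$. Let $N$ be the total number of internally blocked vertices of the $R_j$'s, counted with multiplicity across $j$, and decompose $N = N_1 + N_2 + N_3$, where $N_1$ is the number of internally blocked corners of $R$ itself, $N_2$ is the contribution from vertices lying on the interiors of edges of $R$, and $N_3$ is the contribution from vertices lying in $\intr R$. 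The goal is to deduce that $N_1$ is even, so it suffices to show each of $N$, $N_2$, $N_3$ is even.

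For $N$, fix any $b \in \mathcal{B}$ meeting $\intr R$; convexity of $R$ (as an intersection of half-planes bounded by initial segments and $T$) implies $b \cap R$ is a single chord $[p,q]$. A key point is that both $p$ and $q$ are vertices of the arrangement: each endpoint of $b \cap R$ is either a crossing of $b$ with an edge of $R$, or an endpoint of $b$ itself on $\partial T$ (hence on the edge of $T$ through that point); in either case the endpoint is an intersection of $b$ with a segment of $\overline{\mathcal{S}}$, and condition \textbf{(ii)} supplies a second initial segment through it. A quick local analysis at a vertex $v$ on $b$ gives that $v$ contributes $2$ to $N$ when $v \in \intr R$ (the two opposite sectors around $v$ entered by $b$) and $1$ when $v$ is an endpoint of $b \cap R$ (only one side of $v$ along $b$ lies in $R$). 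Summing over vertices on $b$ gives an even contribution per $b$, so $N$ is even.

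For $N_2$, every initial segment $l' \in \overline{\mathcal{S}}$ that meets $R$ but is not an edge of $R$ cuts $R$ along a chord whose two endpoints lie on $\partial R$. By condition \textbf{(i)}, no three initial segments are concurrent, so neither endpoint can lie at a corner of $R$ (such a corner is already the meeting point of two initial segments which are edges of $R$, so adding $l'$ would produce three concurrent initial segments). The same vertex-recognition argument as above, using \textbf{(ii)} when the endpoint of $l'$ lies on $\partial T$, shows that both endpoints are vertices on interiors of edges of $R$, each contributing $1$ to $N_2$; hence each $l'$ contributes $2$, and $N_2$ is even. For $N_3$, any vertex $v \in \intr R$ sits at the crossing of two initial segments, and $\beta(v)$ splits the four surrounding minimal subregions into two internally blocked and two externally blocked, yielding a contribution of $2$; so $N_3$ is even. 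Therefore $N_1 = N - N_2 - N_3$ is even, as desired.

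The main delicate point, absent in the projective version, is handling endpoints of $b$ or of a crossing $l'$ that lie on $\partial T$: I expect the crux to be precisely the observation that condition \textbf{(ii)} forces any such point to be a genuine vertex of the arrangement, so that it contributes the correct parity in the double count.
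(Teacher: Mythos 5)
Your proposal is correct and is essentially the paper's own argument: the paper simply re-uses the double-counting proof of Lemma~\ref{evenInternalBlocks}, and you adapt it with exactly the right extra observation, namely that the endpoints of $b \cap R$ lying on $\partial T$ are genuine vertices (via \textbf{(ii)}) and hence contribute $1$ each, preserving the parity count. One tiny remark: for the $N_2$ step the appeal to \textbf{(ii)} is unnecessary, since an endpoint of an initial segment $l'$ on $\partial T$ lies on an edge of $T$, which is itself a segment of $\overline{\mathcal{S}}$, so that point is automatically an intersection of two initial segments.
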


\begin{proposition} \label{ForbStruct1}Let $n \geq 1$ be an integer and suppose that $(n-1)$-\ct~holds. Let $\Delta = (T, \mathcal{S}, \mathcal{B})$ be a \tba~of size $n$, and let $x_1, x_2$ and $x_3$ be the vertices of the triangle $T$. Suppose that no blocking segment passes through $x_1$. Suppose that $s_1, s_2 \in \mathcal{S}$ are two initial segments, each with the property that one of the endpoints is on the edge $x_1x_2$ and the other is on the edge $x_1x_3$. Then $s_1$ and $s_2$ are disjoint.\end{proposition}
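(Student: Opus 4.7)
The plan is to argue by contradiction using the inductive hypothesis on a suitable sub-\tba. Suppose $s_1 = a_1 b_1$ and $s_2 = a_2 b_2$, with $a_i \in x_1 x_2$ and $b_i \in x_1 x_3$, were not disjoint. They cannot share an endpoint on $\partial T$, since such a shared endpoint lies on $x_1 x_2$ or $x_1 x_3$ and would force three initial segments (namely $s_1$, $s_2$, and that edge of $T$) to be concurrent, violating condition \textbf{(i)}. Hence $s_1$ and $s_2$ meet at some interior point $p \in \intr T$. After relabeling, assume $a_1$ is closer to $x_1$ than $a_2$ along $x_1 x_2$; the crossing geometry then forces $b_2$ to be closer to $x_1$ than $b_1$ along $x_1 x_3$.

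The key step is to consider the sub-\tba~$\Delta' = (T', \mathcal{S}_{T'}, \mathcal{B}_{T'})$ induced by the sub-triangle $T' = x_1 a_1 b_1$, whose edges are $x_1 a_1 \subset x_1 x_2$, $x_1 b_1 \subset x_1 x_3$, and $s_1 = a_1 b_1 \in \mathcal{S}$. Since $s_1$ is absorbed into the boundary of $T'$, we have $|\Delta'| \le n - 1$, so the $(n-1)$-\ct~applies to $\Delta'$. Two features of $\Delta'$ drive the argument: (a) $x_1$ is a vertex of $T'$ with no blocking segment in $\Delta'$ (inherited from $\Delta$, since restriction to $T'$ cannot introduce new blockings through $x_1$); and (b) $s_2 \cap T' = pb_2$ is a proper initial segment of $\Delta'$ with $p$ in the interior of the edge $a_1 b_1$ and $b_2$ in the interior of the edge $x_1 b_1$ of $T'$.

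Apply the classification to $\Delta'$: it must be one of the listed types. Type $\tbty{B}{0}$ is excluded since $pb_2 \in \mathcal{S}_{T'}$ is nonempty. For each remaining type, the plan is to identify which vertex of $T'$ can play the role of the first vertex $y_1$ (constrained by (a)), and then check whether the rigid endpoint pattern imposed by the type can accommodate $pb_2$ in the position described in (b). In each case (a) and (b) together should force a structural impossibility; for instance, $\tbty{B}{1}$ with $y_1 = x_1$ is ruled out because its segments go from $x_1 a_1$ to $x_1 b_1$, not from the interior of $a_1 b_1$ to $x_1 b_1$.

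The main obstacle is the proliferation of cases through the seven types. The most delicate are $\tbty{B}{1}$ with first vertex $b_1$ (where $pb_2$ formally matches the ``fan from $b_1$'' pattern, so the contradiction must come from the forced evenness---needed to keep $x_1 \in \{y_2, y_3\}$ free of blockings---combined with structural constraints on the positions of the remaining fan vertices on $s_1$ and $x_1 b_1$) and $\tbty{B}{2}$ with $y_1 = x_1$ and $y_3 \in \{a_1, b_1\}$ (where $pb_2$ matches the ``second fan'' pattern, and the contradiction must come from tracing the implied configuration back to $\Delta$, in particular the forced existence of additional proper initial segments from $x_1 x_2$ to $x_1 x_3$ whose interaction with $s_1$ or $s_2$ then yields the needed conflict).
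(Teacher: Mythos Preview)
Your strategy of classifying the single sub-\tba~$\Delta'$ induced by $T' = x_1a_1b_1$ cannot close on its own. The two features you isolate --- $x_1$ unblocked in $\Delta'$, and a proper initial segment $pb_2$ running from the interior of $a_1b_1$ to the interior of $x_1b_1$ --- are simply not enough to exclude all types. Take $\tbty{B}{1}$ with first vertex $y_1 = b_1$: its initial segments go precisely from $b_1a_1$ to $b_1x_1$, matching $pb_2$; and when the parameter is even, neither $y_2$ nor $y_3$ (one of which is $x_1$) carries a blocking segment. There is no further obstruction to extract from inside $T'$: this type is fully consistent with (a) and (b), and no ``evenness plus positional constraints'' argument produces a contradiction in $\Delta'$ alone. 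The same happens for $\tbty{B}{2}$, $\tbty{B}{3}$, and $\trt$ with suitable vertex assignments. Whatever conflict exists lives in the part of $\Delta$ outside $T'$, which throws you back onto the original problem at essentially full strength.

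The paper's proof is long precisely because no single sub-triangle suffices. It opens with a case split on whether certain vertices of the region $a_2b_2c$ are internally or externally blocked, and within each case applies the $(n-1)$-\ct\ to \emph{several} different sub-triangles (those induced by $x_1a_2a_3$, $x_1b_2b_3$, and further triangles that emerge as new segments are identified), tracking how segments extend across boundaries and interact. Crucially, the contradiction in every branch is ultimately obtained from assumption \textbf{(A)} --- the concurrence condition on $l(v_1v_2), \beta(v_3), l(v_4v_5)$ --- applied to a carefully chosen minimal $\overline{\mathcal{S}}$-region. Your outline never invokes \textbf{(A)}, and that alone is a red flag: the proposition fails for general (non-strong) \tbas, as the paper's counterexample in Figure~\ref{TBAnotInDef} illustrates, so any correct proof must use \textbf{(A)} somewhere.
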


\begin{proof}Suppose contrary, let $\Delta = (T, \mathcal{S}, \mathcal{B})$ be a \tba~of size $n$, with vertices of $T$ given by $x_1, x_2$ and $x_3$, and suppose that no blocking segment passes through $x_1$, but two initial segments $a_2 a_3$ and $b_2 b_3$ intersect at the point $c$ and $a_2,b_2 \in x_1 x_2$, $a_3,b_3 \in x_1x_3$. Without loss of generality, the vertices appear in order $x_1, b_2, a_2, x_2$ and $x_1, a_3, b_3, x_3$ on the segments $x_1x_2$ and $x_1x_3$. We consider the following cases on the positions of blocking segments.
\begin{itemize} 
\item[\textbf{Case 1}] Inside the region $a_2b_2c$, the vertices $b_2$ and $c$ are both internally blocked.
\item[\textbf{Case 2}] Inside the region $a_2b_2c$, the vertex $c$ is internally blocked, but $b_2$ is externally blocked.
\item[\textbf{Case 3}] Inside the region $a_2b_2c$, the vertex $c$ is externally blocked. Thus $c$ is internally blocked in the region $x_1 a_3 c b_2$, and by the parity of the number of internally blocked vertices, exactly one of $b_2$ and $a_3$ is internally blocked in that region, without loss of generality, $b_2$. Thus, $b_2$ is externally blocked in the region $a_2b_2c$.
\end{itemize}
We treat each case separately and we depict the steps of the proof in Figures~\ref{prop8case1Fig},~\ref{prop8case2Fig} and~\ref{prop8case3Fig}.\\

\noindent\framebox{\textbf{Case 1.}} Consider the \tba~$\Delta_1$ induced by the triangle $x_1 a_2 a_3$. Since $|\Delta_1| < |\Delta| = n$, we may apply the \ct~to $\Delta_1$. All of $x_1, a_2, a_3$ are externally blocked, so the type of $\Delta_1$ is either $\tbty{B}{i}$, for some i, or $\trt$.
\begin{figure}
\includegraphics[width = \textwidth]{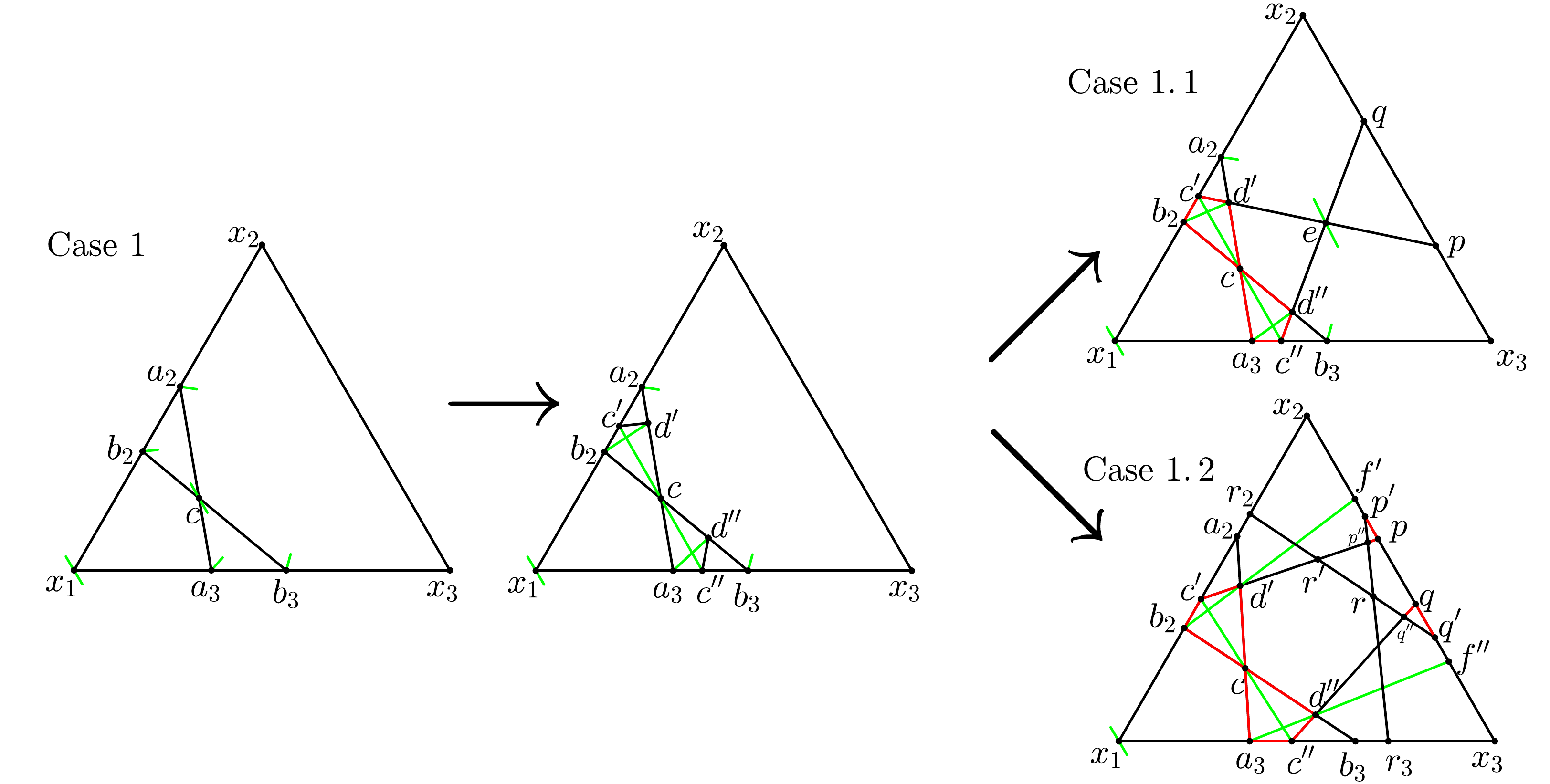}
\caption{Case 1 of proof of Proposition~\ref{ForbStruct1}}
\label{prop8case1Fig}
\end{figure}
If the type of $\Delta_1$ is $\trt$, then there is an initial segment $s \in \mathcal{S}$, whose restriction to $\Delta_1$ is $y_1y_2$, with $y_1 \in x_1a_3$ and $y_2 \in ca_3$. However, that implies that $s$ intersects segment $cb_3$, at some point $b_4$, so the \tba~$\Delta_2$, induced by the triangle $x_1b_2b_3$ has the vertex $b_3$ externally blocked, and initial segments $y_1b_4$ and $a_3c$ intersect. However, this cannot occur in any of the seven types we defined, and by the $(n-1)$-\ct,~$\Delta_2$ has one of these types, which is a contradiction.\\
Hence, the type of $\Delta_1$ is one of the four basic types, and analogously the \tba~$\Delta_2$ (induced by the triangle $x_1b_2b_3$) has also a basic type. By the definition of the basic types, if we write $d', d'', c', c''$ for the first vertices next to $c$ on $a_2c$, next to $c$ on $b_3c$, next to $b_2$ on $a_2b_2$, next to $a_3$ on $b_3a_3$, respectively, then $c'd', b_2d', cc', c''d'', a_3d''$ and $cc''$ are minimal segments, and $c'd' \in \mathcal{S}_{\Delta_1}, b_2d', cc' \in \mathcal{B}_{\Delta_1}, c''d'' \in \mathcal{S}_{\Delta_2}, a_3d'', cc'' \in \mathcal{B}_{\Delta_2}$. Furthermore, we also have that the segments $b_2c, ca_3$ are minimal. Let $p,q$ be vertices on $\partial T$ such that $c'p \colon= s(c'd')$ and $c''q\colon= s(c''d'')$.\\

\noindent\textbf{Claim.} The vertices $p$ and $q$ lie on $x_2x_3$.
\begin{proof}[Proof of the claim.] We prove the claim for $p$, the proof for $q$ is similar. Suppose contrary, $p \in x_1x_3$. Looking at \tba~$\Delta_p$ induced by $x_1 c' p$, we see that $c'$ is internally blocked, and $|\Delta_p| < n$, so the $(n-1)$-\ct~implies that $\Delta_p$ has one of types $\tbty{B}{1}, \tbty{I}{1}$ or $\tbty{I}{2}$. We can immediately discard type $\tbty{B}{1}$ as $cc'$ is blocking, but $c \notin x_1p$. On the other hand, intersecting types do not permit $\beta(c')$ crossing segment $d'a_3$ in its interior, as $a_3 \in px_1$, so we get a contradiction.\qed\end{proof}

Next, we consider the cases whether the segments $c'p$ and $c''q$ intersect or not.\\

\noindent\textbf{Case 1.1, $c'p$ and $c''q$ intersect.} Let $e \colon= c'p \cap c''q$. In the region $x_1 c'' e c'$, the vertices $c'$ and $c''$ are internally blocked, while $x_1$ is not, so $e$ is externally blocked.\\
\indent Look at the \tba~$\Delta_3$ induced by the triangle $c'' q x_3$. Its size is smaller than $n$, so the $(n-1)$-\ct~applies. As we have initial segments $d''b_3$ and $ep$, the type of $\Delta_3$ is not $\tbty{B}{0}$, nor $\tbty{B}{1}$. Suppose for a moment that is one of the two remaining basic types. Then, $\beta(e)$ crosses $x_3p$, at a point $p'$, say, and $\beta(p)$ crosses $ec''$ at a point $e'$, say, and we know that $ee', pp', e'p'$ are minimal initial segments. If $e'$ is in the interior of $ed''$, then $\beta(p)$ crosses $d''c \cup cd'$, which is impossible, as $d''c, d'c$ are minimal and $c$ is externally blocked in $cd''ed'$. But, $c''d''$ is minimal, so we must have $e' = d''$, but then $d''$ has three initial segments passing through it, $d''p', d''e, d''b_3$, which is impossible as well. Hence, $\Delta_3$ is not one of basic types.\\
\indent If the type is $\tbty{I}{1}$ or $\tbty{I}{2}$, then $q$ is internally blocked in the region $c''qx_3$. Since $e$ is externally blocked in the region $pqe$, $p$ is internally blocked in that region. Let the blocking segment through $q$ intersect $ep$ at a point $r$, say. By the definition of types $\tbty{I}{1}$ and $\tbty{I}{2}$, there is another initial segment $r_1r_2$ through $r$, with $r_1 \in px_3, r_2 \in eq$. Let $r_3 \in \partial T$ be the vertex such that $r_1r_3 = l(r_1r_2) \cap T$. Since $r_1 \in x_2x_3$ and $r_1$ and $r_3$ are on different sides of $l(c'p)$, we have $r_3 \in x_1x_2$. However, $p$ is internally blocked in the region $pqe$, and thus in the region $x_2c'p$, which thus have one of the types $\tbty{B}{1}, \tbty{I}{1}$ or $\tbty{I}{2}$, and also intersecting segments $eq$ and $r r_3$, which is a contradiction with the definitions of all these three types. Hence, $c''qx_3$ has type $\trt$.\\
\indent Since the type of $\Delta_3$ is $\trt$ and $c''d''$ is a minimal segment, it follows that $c''b_3$ and $b_3 d''$ are also minimal, and that these three segments bound a minimal $\mathcal{S}_{\Delta_3}$-region. Let $R$ be the other minimal $\mathcal{S}_{\Delta_3}$-region that contains the segment $b_3d''$, which is a hexagon by definition of $\trt$, as $\beta(d'')$ crosses $qx_3$. Let $u,v, w$ be the vertices of $R$ such that $u, b_3, d'', v, w$ are consecutive on $\partial R$. As $l(a_3d'') \cap T$ is the blocking segment through $d''$, by the assumption \textbf{(A)} $l(ub_3), l(a_3 d'')$ and $l(vw)$ are concurrent. But $l(ub_3) \cap l(a_3 d'') = a_3$, and $a_2a_3$ is another initial segment through $a_3$, which is a contradiction, as otherwise three initial segments would be concurrent.\\

\noindent\textbf{Case 1.2, $c'p$ and $c''q$ are disjoint.} Look at the \tba~$\Delta_4$ induced by the triangle $x_2 c' p$. Let $f' \colon= l(b_2d') \cap x_2x_3$ and $f'' \colon= l(a_3 d'') \cap x_2x_3$. Applying the $(n-1)$-\ct, $\Delta_4$ has one of the seven defined types. However, it cannot have a basic type, as $d' f'$ gives a blocking segment with endpoints on $x_2p$ and $c'p$, while $a_2d'$ is an initial segment and $a_2 \in c'x_2$.\\
\indent Suppose for a moment that $\Delta_4$ has type $\tbty{I}{1}$ or $\tbty{I}{2}$. By definition of these types we see that $d'f'$ does not cross any initial segment, except at its endpoints. Thus, the segments $f'd', d'c, cd'', d''c''$ are minimal. Thus, there is no initial segment that crosses $qx_3$ and $qc''$ in their interiors. But looking at the \tba~$\Delta_5$ induced by $c''qx_3$, and applying $(n-1)$-\ct, we see that $\Delta_5$ has to have one of the seven types defined, and no type satisfies the requirements that $d''b_3$ is an initial segment, $d''f''$ is a blocking segment, and there are no initial segments with a vertex on $qx_3$ and a vertex on $qc''$, which is a contradiction. Therefore, $\Delta_4$ must be of type $\trt$, and similarly, $\Delta_5$ must be of the same type.\\
\indent By definition of the type $\trt$, there are vertices $p' \in px_2, p'' \in p c'$, such that $pp',pp''$ are minimal segments and $p'p''$ is an initial segment. Similarly, there are vertices $q' \in qx_3, q'' \in q c''$ such that $qq', qq''$ are minimal segments and $q'q''$ is an initial segment. Also, recalling that $c'd'$ and $c''d''$ are minimal segments, and using the definition of $\trt$, we have that $a_2d'$ and $b_3d''$ are also minimal segments. Thus $l(p'p'')$ and $l(q'q'')$ cannot cross $a_2c \cup cb_3$, so $l(p'p'')$ crosses $x_1 x_3$, at $r_3$, say, and $l(q'q'')$ crosses $x_1x_2$, at $r_2$, say. In particular, $r_2q'$ and $r_3 p'$ intersect, at some point $r$. Also, $r_2q'$ must intersect $c'p$, at some point $r'$. However, we then have the following structure. In the \tba~induced by $x_2 r_2 q'$, the vertex $q'$ is externally blocked (by minimality of the region $qq'q''$), $p, p' \in x_2q', r, r' \in r_2q'$ and $pr', p'r$ are intersecting initial segments. Applying $(n-1)$-\ct, we obtain a contradiction, as none of the seven types has this structure. This concludes the proof in the \textbf{Case 1}.\\

\noindent\framebox{\textbf{Case 2.}} Consider the \tba~$\Delta_1$ induced by the triangle $x_1a_2a_3$. By the \ct~$\Delta_1$ has type $\tbty{B}{1}, \tbty{I}{1}$ or $\tbty{I}{2}$. Immediately, we see that the basic type is not possible here, as initial segment $b_2c$ and $\beta(a_2)$ intersect. Further, the point $e_2 \colon= b_2c \cap \beta(a_2)$, has the property that $b_2e_2$ and $e_2 c$ are both minimal segments. Let $d_2 e_3$ be the other initial segment through $e_2$, with $d_2 \in x_1a_2, e_3 \in a_2a_3$. By the definition of types $\tbty{I}{1}$ and $\tbty{I}{2}$, we have $d_2 \in a_2b_2, e_3 \in ca_3$. Let $d_3 \colon= l(d_2e_2) \cap x_1x_3$. Arguing similarly for the \tba~$\Delta_2$ induced by the triangle $x_1b_2b_3$, we obtain that the segments $b_2d_2, d_2e_2, e_2b_2, e_2c, ce_3, e_2e_3, e_3d_3, d_3a_3, a_3e_3$ are all minimal. From this, we also see that $\Delta_1$ and $\Delta_2$ must both have type $\tbty{I}{1}$. Writing $x_2' \colon= \beta(b_3) \cap x_1x_2, x'_3 \colon= \beta(a_2) \cap x_1x_3$, we also obtain that $x'_2x'_3$ is a minimal initial segment, that $x'_2b_2$ and $x'_3a_3$ are also minimal and $b_2a_3$ is a blocking segment.\\
\indent Observe $d_2c$ and $d_3c$ cannot both be blocking segments, as otherwise $d_2d_3$ would simultaneously be blocking and initial. Without loss of generality, $d_3c$ is not a blocking segment. By the definition of type $\tbty{I}{1}$, there are vertices $p,q \in d_3b_3$ and $p',q' \in cb_3$, such that $d_3p, pq, cp', p'q'$ are minimal and $p'q, pq'$ are initial segments and $p'q, pq', b_3e_3$ are concurrent. We now consider cases on the position of intersection $r$ of $l(p'q)$ and $\partial T$, other than $q$.\\[6pt]
\begin{figure}
\includegraphics[width = \textwidth]{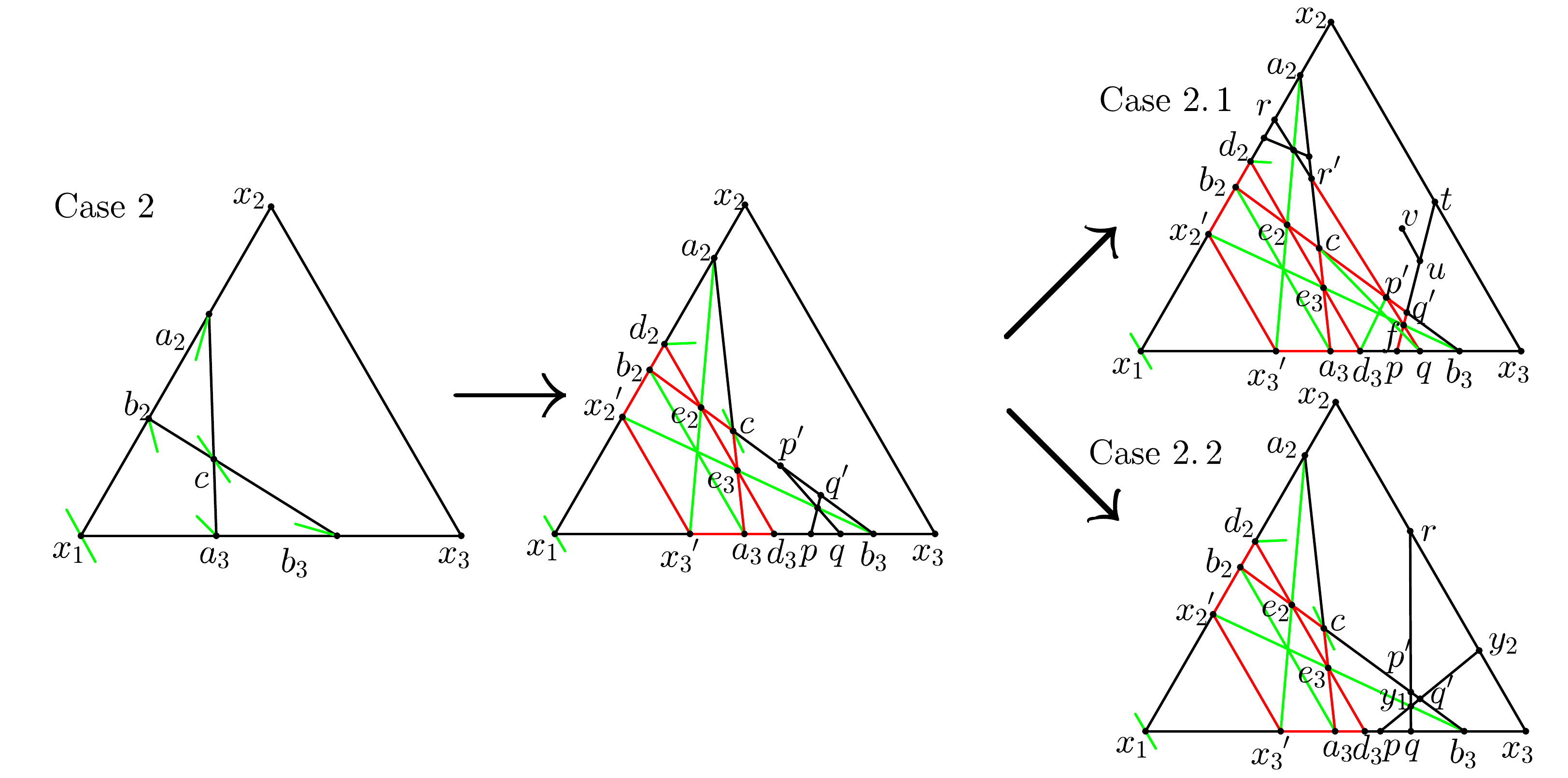}
\caption{Case 2 of proof of Proposition~\ref{ForbStruct1}}
\label{prop8case2Fig}
\end{figure}
\noindent\textbf{Case 2.1, $r \in x_1x_2$.} The type of \tba~$\Delta_3$ induced by $x_1rq$ must be $\trt$, as $x_1$ and $q$ are externally blocked, and some of the initial segments intersect. As $d_2e_2, e_2c, cp'$ are minimal segments, we must have $r \in d_2x_2$. If $r \in a_2x_2$, then we obtain a contradiction from the type $\trt$ for $\Delta_3$ and the fact that $a_2a_3$ and $d_2d_3$ intersect, while $a_2, d_2 \in x_1r, a_3d_3 \in x_1q$. Therefore $r \in d_2a_2$, and $rq$ intersects $a_2c$, at point $r'$, say. Note that the segments $q'p'$ and $p'r'$ are minimal. Returning to the \tba~$\Delta_1$ induced by $x_1a_2a_3$, which is of type $\tbty{I}{1}$, as $r$ is internally blocked in $a_2rr'$, we have vertices $s \in d_2r, s' \in a_2r'$, such that $rs, r's'$ are minimal and $rr', ss'$ are initial segments that are concurrent with $a_2e_2$.\\
\indent Extend $pq'$ to the other intersection $t$ of $l(pq')$ and $\partial T$, other than $p$. If $t \in x_1x_2$, applying the $(n-1)$-\ct~for the \tba~$\Delta_4$ induced by $x_1tp$, we have that $\Delta_4$ has type $\tbty{B}{1}, \tbty{I}{1}$ or $\tbty{I}{2}$, as $p$ is internally blocked. But, the fact that hexagonal region $x'_2 x'_3 a_3 e_3 e_2 b_2$ is minimal $\overline{\mathcal{S}_{\Delta_4}}$-region with edges on $x_1 t$ and $x_1p$ is in contradiction with the definitions of these two types. Therefore, $t \in x_2x_3$.\\
\indent Let $f \colon= pq' \cap p'q$. Applying $(n-1)$-\ct~to the \tba~induced by $x_3pt$, and observing that $q'f, qf$ are minimal segments, $q'b_3$ is initial and $fb_3$ is blocking, we must have that $qq'$ is also a blocking segment. Consider the minimal $\overline{\mathcal{S}}$-region $R$, that contains vertices $s', r', p'$ and $q'$. Let $u,v$ be the two vertices of $R$, such that $p',q', u, v$ are consecutive. As $p'r'$ and $\beta(q') = q'q$ intersect at $q$, by the assumption \textbf{(A)}, the line $l(uv)$ must also pass through $q$. However, the only initial segment other than $rq$ through $q$ is $x_1x_3$, which is disjoint from $R$, and we reach a contradiction.\\

\noindent\textbf{Case 2.2, $r \in x_2x_3$.} Applying the \ct~to the \tba~induced by $qrx_3$, we obtain a contradiction as $q$ is internally blocked in this region, while $y_1y_2 \colon= l(pq') \cap qrx_3$ and $p'b_3$ intersect, while $p', y_1 \in qr, y_2 \in rx_3, b_3 \in qx_3$, and none of the defined types has this substructure.\\

\noindent\framebox{\textbf{Case 3.}} Let $\Delta_1$ be the \tba~induced by the triangle $x_1b_2b_3$. Applying the \ct~we see that $\Delta_1$ has type $\tbty{B}{1}, \tbty{I}{1}$ or $\tbty{I}{2}$. But, $\beta(b_3)$ crosses $ca_3$, discarding $\tbty{B}{1}$ as an option. Therefore, there are collinear vertices $d_3 \in x_1a_3, e_3 \in a_3c, f_3 \in cb_3$ such that $d_3a_3, e_3d_3, e_3f_3, e_3a_3, e_3c$ and $cf_3$ are minimal, $d_3f_3$ is initial and $e_3b_3$ is blocking. Let $p \not= d_3$ be the other intersection point of $l(d_3f_3)$ and $\partial T$. If $p \in x_1x_2$, then segments $b_2b_3$ and $d_3p$ satisfy conditions of the \textbf{Case 2}, which we have proved to be impossible. Hence, $p \in x_2x_3$.\\
\begin{figure}
\includegraphics[width = \textwidth]{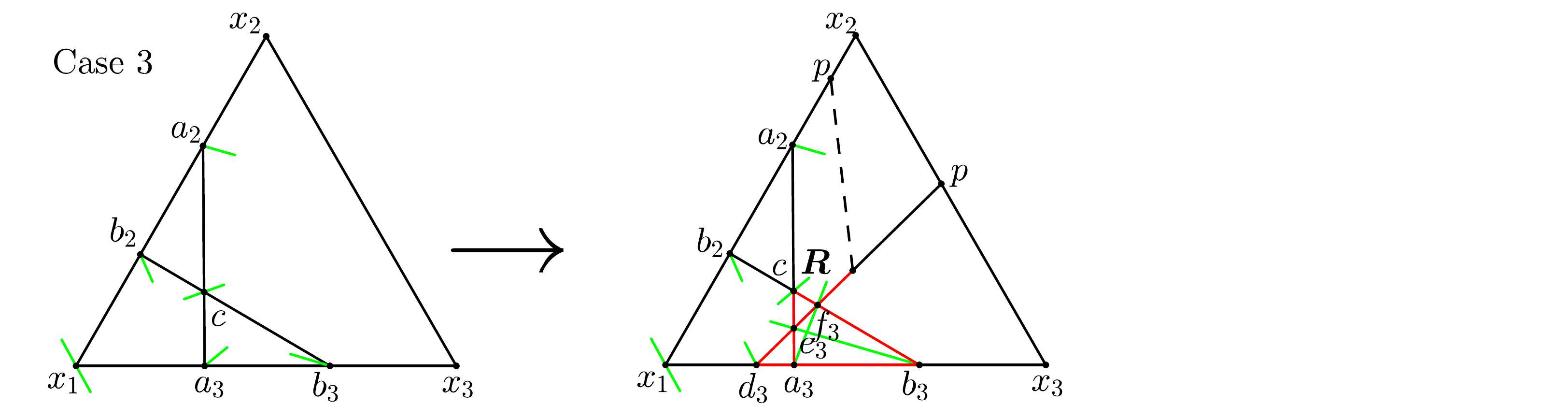}
\caption{Case 3 of proof of Proposition~\ref{ForbStruct1}}
\label{prop8case3Fig}
\end{figure}
\indent Apply the $(n-1)$-\ct~to the \tba~induced by $pd_3x_3$. Since $e_3f_3$ and $e_3a_3$ are minimal, it follows that $f_3b_3$ and $a_3b_3$ are also minimal and $a_3f_3$ is blocking. But, look at the minimal $\overline{\mathcal{S}}$-region $R$, that has $f_3$ and $c$ as two vertices, but not $e_3$. Since $l(a_2c)$ and $\beta(f_3)$ meet at $a_3$, by the assumption \textbf{(A)}, we have $a_3 \in l(s)$, where $s$ is another segment of $R$. But, the only other initial segment through $a_3$ is $x_1x_3$, which is disjoint from $R$, thus we have a contradiction.\qed\end{proof}

\begin{proposition} \label{ForbStruct2} Let $n \geq 1$ be an integer and suppose that $(n-1)$-\ct~holds. Let $\Delta = (T, \mathcal{S}, \mathcal{B})$ be a \tba~of size $n$, and let $x_1, x_2$ and $x_3$ be the vertices of the triangle $T$. Suppose there are blocking segments through $x_1$ and $x_2$. Suppose also that $a_1a_2, b_1b_2 \in \mathcal{S}$ are two initial segments, such that $a_1 \in x_1x_2, b_2 \in x_2x_3, a_2,b_1 \in x_1x_3$. Then $a_1a_2$ and $b_1b_2$ are disjoint.\end{proposition}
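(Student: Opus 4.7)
I argue by contradiction and induction on the size of $\Delta$, mimicking the structure of the proof of Proposition~\ref{ForbStruct1}. Suppose $a_1a_2$ and $b_1b_2$ intersect at some point $c$ in the interior of $T$, and, without loss of generality, arrange that the order along $x_1x_3$ is $x_1, a_2, b_1, x_3$. The two chords partition $T$ into four cells: a triangle $R_1 = x_1a_1a_2$ at the vertex $x_1$, a quadrilateral $R_2 = a_1x_2b_2c$ at $x_2$, a triangle $R_3 = a_2cb_1$, and a quadrilateral $R_4 = cb_2x_3b_1$. The essential new difficulty compared with Proposition~\ref{ForbStruct1} is that two corners of $T$, namely $x_1$ and $x_2$, carry blocking segments: $\beta(x_1)$ must cross $a_1a_2$ at some interior vertex, and $\beta(x_2)$ must enter $R_2$.

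The first step is to apply the $(n-1)$-\ct~to the sub-\tba~$\Delta_1$ induced by $R_1$. Since $b_1b_2$ contributes nothing to $\Delta_1$, we have $|\Delta_1| < n$. Because $x_1$ is internally blocked in $\Delta_1$, the admissible types for $\Delta_1$ are $\tbty{B}{1}$ (odd, with one of $a_1, a_2$ as first vertex) and $\tbty{I}{1}$ or $\tbty{I}{2}$ (with $x_1$ playing the role of $y_2$ or $y_3$); all other basic types and the triangular type $\trt$ leave every corner externally blocked and are therefore excluded. In each admissible type the structure near $x_1$ is fully determined: $\beta(x_1)$ meets $a_1a_2$ at a single vertex $d$, there is a second initial segment through $d$ with its other endpoint on $x_1a_1$ or $x_1a_2$, and the segments adjacent to $d$ are forced to be minimal. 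By running the same analysis on the sub-\tba~induced by $x_3b_1b_2$, I either obtain analogous structural data (when $\beta(x_3)$ exists) or reduce the remaining argument directly to Proposition~\ref{ForbStruct1} when $x_3$ is externally blocked.

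Next, I split into cases based on how $c$ is blocked: either $\beta(c)$ enters $R_3$, in which case parity forces exactly one of $a_2, b_1$ to be internally blocked in $R_3$ (giving two symmetric sub-cases), or $\beta(c)$ enters $R_2$, in which case $\beta(c)$ and $\beta(x_2)$ live in the same quadrilateral and I must analyse their relative position and the sides of $R_2$ they hit. In each case the strategy is the same as in Proposition~\ref{ForbStruct1}: the structural data from the previous step produces auxiliary initial segments that let me carve a quadrilateral cell into sub-triangles, and the $(n-1)$-\ct~applied to each resulting sub-\tba~then yields a configuration incompatible with any of the seven types $\tbty{B}{0}, \tbty{B}{1}, \tbty{B}{2}, \tbty{B}{3}, \tbty{I}{1}, \tbty{I}{2}, \trt$. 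Whenever an auxiliary sub-triangle ends up with an externally blocked corner, I can also close the case by invoking Proposition~\ref{ForbStruct1} directly, which will often shortcut the type analysis.

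The main obstacle is that the cells $R_2$ and $R_4$ adjacent to $c$ are quadrilaterals and therefore do not themselves induce sub-\tba{}s. One must first produce the right auxiliary initial segments (via the classification of $\Delta_1$ and its $x_3$-counterpart) and then carefully track how $\beta(x_2)$ and $\beta(c)$ interact with the resulting sub-triangulation, using minimality of segments, uniqueness of blocking segments, and the concurrency constraint from assumption~\textbf{(A)}. As in Proposition~\ref{ForbStruct1}, this produces a branching case analysis (analogous to the \textbf{Case~1.1}/\textbf{1.2} split there) whose bookkeeping is the technically demanding part of the argument.
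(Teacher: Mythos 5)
Your setup is geometrically inconsistent, and the error propagates through the whole plan. With $a_1\in x_1x_2$, $b_2\in x_2x_3$ and $a_2,b_1\in x_1x_3$, the segments $a_1a_2$ and $b_1b_2$ can intersect only if $b_1$ lies between $x_1$ and $a_2$: with your ``without loss of generality'' order $x_1,a_2,b_1,x_3$, both endpoints of $b_1b_2$ lie in the convex quadrilateral $a_1x_2x_3a_2$, so the two segments are disjoint and there is nothing to argue. In the correct picture the cell at $x_1$ is the quadrilateral $x_1a_1cb_1$ (the triangle $x_1a_1a_2$ is a legitimate sub-triangle, but it contains the piece $b_1c$ of $b_1b_2$, so your claim that $b_1b_2$ ``contributes nothing'' to the induced sub-\tba~is false), the triangular cell is $b_1ca_2$ resting on $x_1x_3$, and the cell at $x_3$ is $a_2cb_2x_3$. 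This also breaks your case split on $\beta(c)$: the four cells around $c$ pair up into opposite pairs $\{x_1\text{-cell},x_3\text{-cell}\}$ and $\{b_1ca_2,\,x_2\text{-cell}\}$, so ``$\beta(c)$ enters $R_3$'' and ``$\beta(c)$ enters $R_2$'' describe the same alternative, while the genuinely different alternative --- $\beta(c)$ crossing the $x_1$- and $x_3$-cells --- is never treated. (The paper instead splits on the internal/external blocking of $a_1$ and $c$ with respect to the region $x_2a_1cb_2$, giving four cases.) Your fallback of ``reducing directly to Proposition~\ref{ForbStruct1} when $x_3$ is externally blocked'' also does not apply to the given pair, since $a_1a_2$ does not have both endpoints on the two edges through $x_3$.

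Beyond the setup, the proposal defers all of the substantive content. Asserting that carving the quadrilateral cells and applying the $(n-1)$-\ct~``yields a configuration incompatible with any of the seven types'' is precisely what has to be proved, case by case, and it is where the paper's proof does its real work: extracting minimal segments and collinear triples from the types $\tbty{B}{1},\tbty{I}{1},\tbty{I}{2}$ or $\trt$ of the sub-\tbas~induced by $x_1a_1a_2$ and $b_1b_2x_3$, and then deriving a contradiction either because some line would have to cross $x_1x_3$ twice, or via assumption \textbf{(A)} applied to a minimal $\overline{\mathcal{S}}$-region to force a third initial segment through a point that only the (disjoint) segment $x_1x_3$ could supply, or via Proposition~\ref{ForbStruct1} applied to auxiliary segments constructed along the way; the hardest case (both $a_1$ and $c$ externally blocked in $x_2a_1cb_2$) needs further subcases on where an auxiliary segment exits $\partial T$. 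None of these contradictions is actually derived, or even sketched at a checkable level, in your proposal, so as it stands it is an outline resting on a flawed configuration rather than a proof.
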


\begin{proof} Let $c$ be the intersection $a_1a_2 \cap b_1b_2$. Depending on the blocking segment through $c$ and $a_1$, we have the following four cases.
\begin{itemize}
\item[\textbf{Case 1}] In the region $x_2a_1cb_2$, both $a_1$ and $c$ are internally blocked.
\item[\textbf{Case 2}] In the region $x_2a_1cb_2$, $a_1$ is internally blocked, while $c$ is externally blocked.
\item[\textbf{Case 3}] In the region $x_2a_1cb_2$, $a_1$ is externally blocked, while $c$ is internally blocked.
\item[\textbf{Case 4}] In the region $x_2a_1cb_2$, both $a_1$ and $c$ are externally blocked.
\end{itemize}

We treat each case separately and we depict the steps of the proof in Figures~\ref{prop9case1Fig},~\ref{prop9case2Fig},~\ref{prop9case3Fig} and~\ref{prop9case4Fig}.\\

\noindent\framebox{\textbf{Case 1.}} We have $b_2$ externally blocked in the region $b_1b_2x_3$, and in the region $b_1ca_2$, the vertex $b_1$ is externally blocked, while $a_2$ is internally blocked. Applying the $(n-1)$-\ct~to the \tba~induced by $b_1b_2x_3$, it has a basic type or $\trt$. In either of these cases, from the definition of the types, there are vertices $d \in b_1a_2, e \in b_1c$ such that $ce, ed, da_2$ are minimal initial segments, and $cd, a_2e$ are blocking. Applying the $(n-1)$-\ct~to the \tba~induced by the triangle $x_1a_1a_2$, we see that it has type $\tbty{I}{1}$ or $\tbty{I}{2}$, so $l(de)$ must cross the segment $a_1c$, at some point $f$, and also $fc$ is minimal. Let $R$ be the minimal $\overline{\mathcal{S}}$-region with vertices $c,f$, but not $e$. Let $f', u, v$ be the vertices of $R$, such that $f', f,c,u,v$ are consecutive, appearing in this order of $\partial R$. As $l(ff') = l(de)$ and $\beta(c)$ intersect at $d$, by the assumption \textbf{(A)}, we must have $d \in l(uv)$. However, the other initial segment through $d$, apart from $de$, is $x_1x_3$, which is disjoint from $R$, and we have a contradiction in this case.\\[6pt]
\begin{figure}
\begin{center}
\includegraphics[width = 0.7\textwidth]{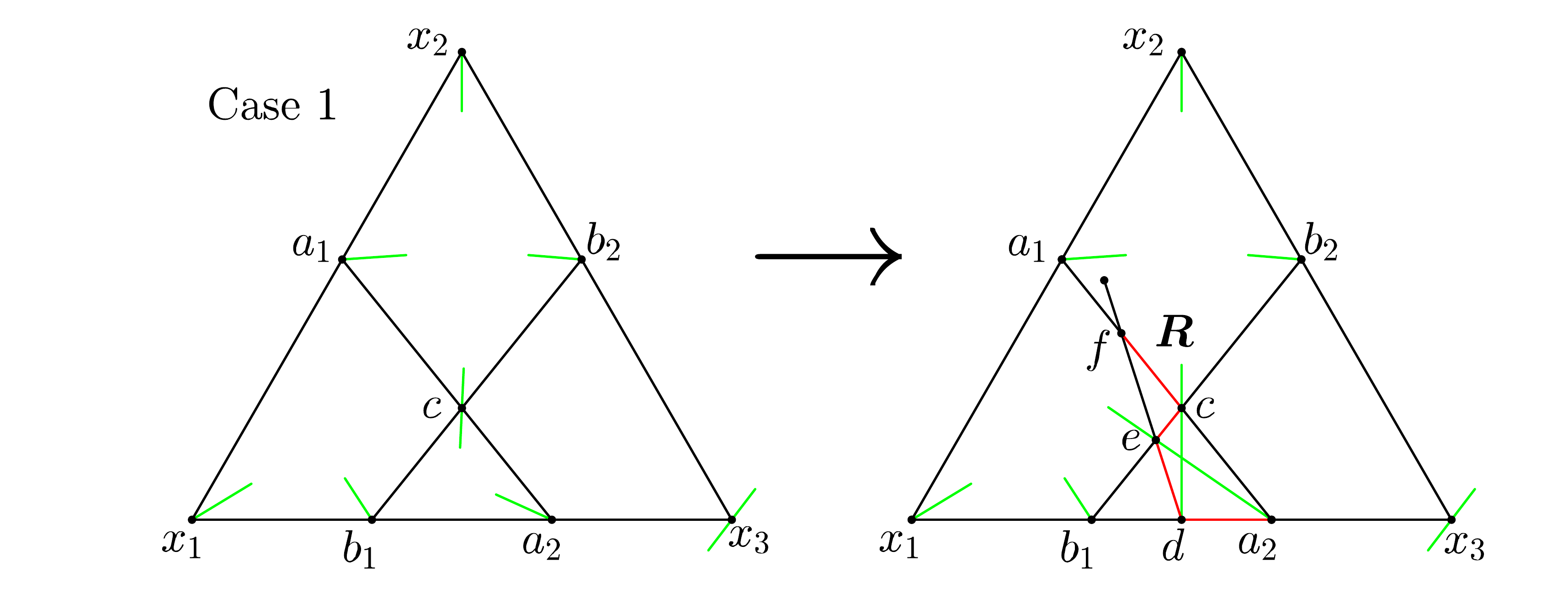}
\caption{Case 1 of proof of Proposition~\ref{ForbStruct2}}
\label{prop9case1Fig}
\end{center}\end{figure}
\noindent\framebox{\textbf{Case 2.}} We have $b_2$ internally blocked in the region $b_1b_2x_3$, and in the region $b_1ca_2$, the vertices $b_1$ and $a_2$ are internally blocked. Let $e \colon= \beta(a_2) \cap b_1c$. Applying the $(n-1)$-\ct~to the \tba~induced by $x_1a_1a_2$, we see that its type is either $\tbty{I}{1}$ or $\tbty{I}{2}$. In either case, there are vertices $d \in x_1b_1, f \in a_2c$ such that $b_1d, de, ef, fc$ are minimal initial segments. Note that $l(ef)$ intersects the segment $b_2x_3$, at some point $p$, say. However, applying $(n-1)$-\ct~to the \tba~induced by $b_1b_2x_3$, we obtain a contradiction, as no type permits a configuration where $b_1, b_2$ are internally blocked, and $ca_2, ep$ cross.\\[6pt]
\begin{figure}\begin{center}
\includegraphics[width = 0.7\textwidth]{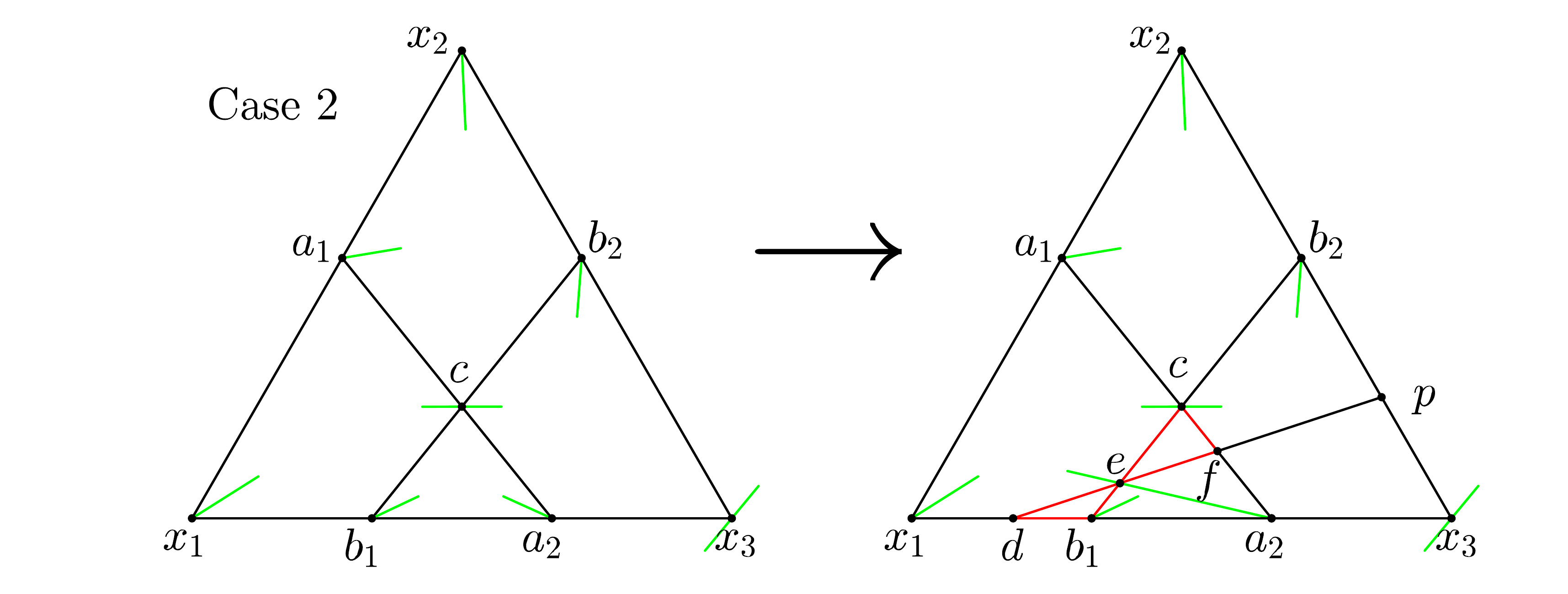}
\caption{Case 2 of proof of Proposition~\ref{ForbStruct2}}
\label{prop9case2Fig}
\end{center}\end{figure}
\noindent\framebox{\textbf{Case 3.}} We have $b_2$ internally blocked in the region $b_1b_2x_3$, and in the region $b_1ca_2$, the vertex $b_1$ is internally blocked, while $a_2$ is externally blocked. Apply the $(n-1)$-\ct~to the \tba~induced by $x_1a_1a_2$. Its type is one of $\tbty{B}{1}, \tbty{I}{1}$ and $\tbty{I}{2}$, but in any case, $b_1c$ is minimal, and there are vertices $d \in b_1a_2, e \in ca_2$ such that $ce,b_1d$ and $de$ are minimal initial segments, and $cd$ and $b_1e$ are blocking.  Apply the $(n-1)$-\ct~to the \tba~induced by $b_1b_2x_3$, which must then have type $\tbty{I}{1}$ or $\tbty{I}{2}$, and in particular $l(de)$ crosses $cb_2$, at a point $f$, say.\\
\indent Let $p \not= d$ be the other point of intersection of $l(de)$ with $\partial T$. If $p \in x_2x_3$, then the segments $pd, b_1b_2$ and the vertex $x_3$ form a configuration that is impossible by Proposition~\ref{ForbStruct1}, hence $p \in x_1x_2$. As $pd$ crosses $a_1a_2$ at $e$, we actually have $p \in a_1x_2$. However, applying $(n-1)$-\ct~to the \tba~induced by $px_1d$, we obtain a contradiction, as no type allows a subconfiguration, where $b_1f$ and $a_1e$ cross, $x_1$ and $d$ are internally blocked, and $e,f \in pd, b_1 \in x_1d, a_1 \in x_1p$.\\[6pt]
\begin{figure}\begin{center}
\includegraphics[width = 0.7\textwidth]{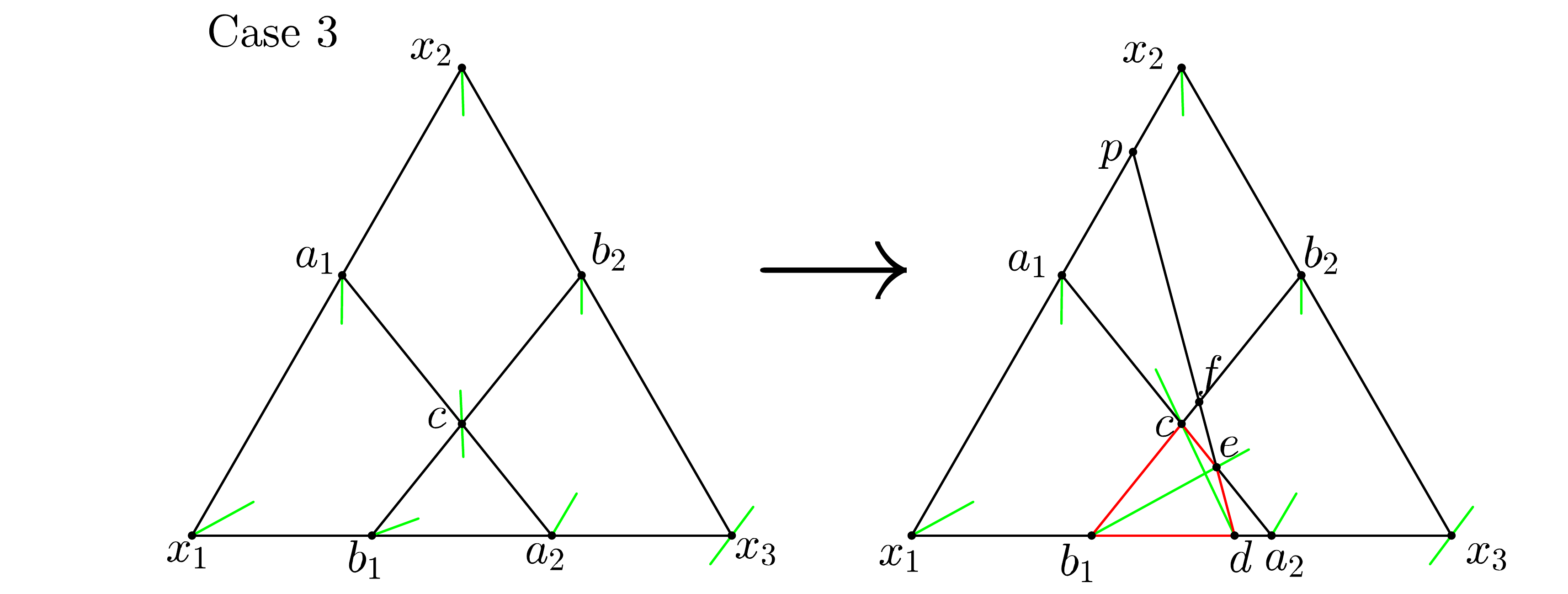}
\caption{Case 3 of proof of Proposition~\ref{ForbStruct2}}
\label{prop9case3Fig}
\end{center}\end{figure}
\noindent\framebox{\textbf{Case 4.}} We have $b_2$ externally blocked in the region $b_1b_2x_3$, and in the region $b_1ca_2$, the vertices $b_1$ and $a_2$ are externally blocked. Applying the $(n-1)$-\ct~to the \tba~$\Delta_1$ induced by $x_1a_1a_2$, we see that its type is $\tbty{B}{1}, \tbty{I}{1}$ or $\tbty{I}{2}$. In particular, the \tba~induced by $b_1a_2c$ is of type $\tbty{B}{0}$ or $\tbty{B}{1}$ and $b_1c$ is minimal. On the other hand, applying the $(n-1)$-\ct~to the \tba~$\Delta_2$ induced by $b_1b_2x_3$, which must have a basic type or $\trt$, we also see that as $b_1c$ is minimal, so are $b_1a_2$ and $a_2c$.
\begin{figure}\begin{center}
\includegraphics[width = 0.7\textwidth]{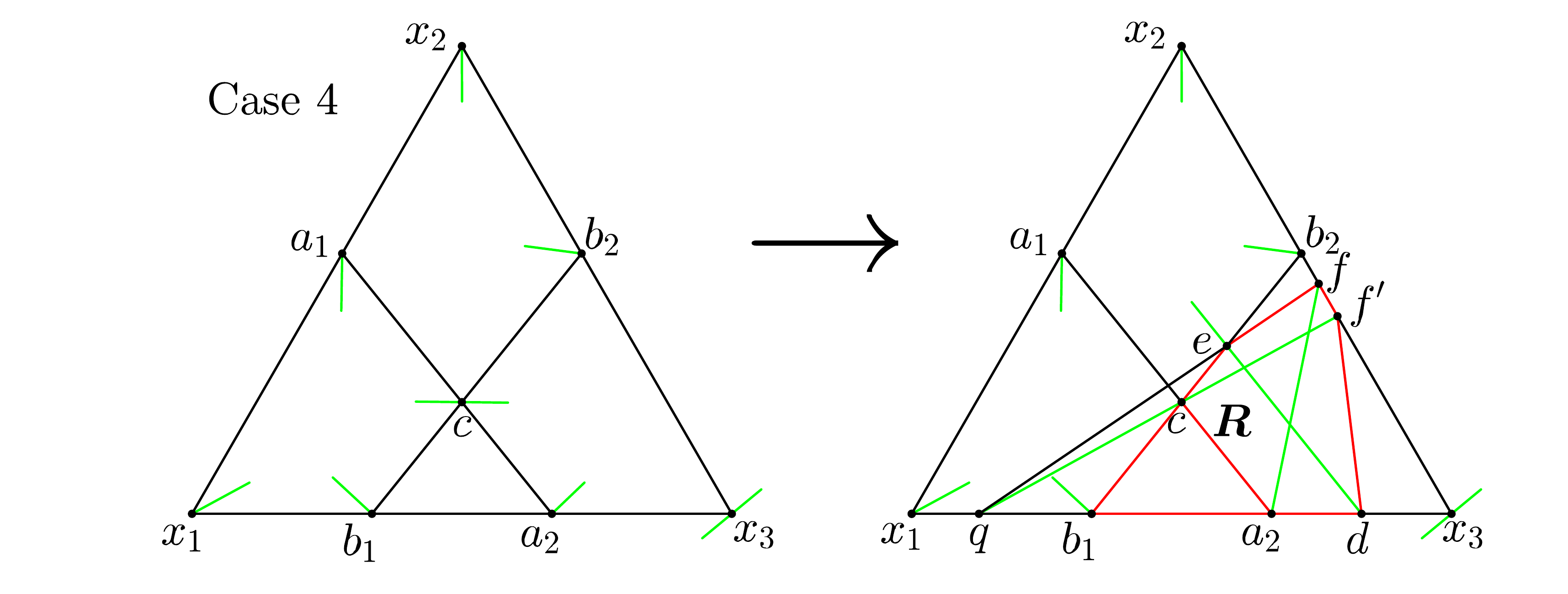}
\caption{Case 4 of proof of Proposition~\ref{ForbStruct2}}
\label{prop9case4Fig}
\end{center}\end{figure}
Assume now that there is no initial segment with one vertex on $cb_2$ and the other vertex on $a_2x_3$. Thus $\Delta_2$ has type $\trt$, and there are vertices $d \in a_2x_3, f,f' \in x_3b_2, e \in b_2c$ such that $a_2d, df', f'f, fe, ec$ are minimal initial segments, that bound a minimal $\overline{\mathcal{S}}$-region $R$, and $a_2f, de, cf'$ are blocking. Observe that by the assumption \textbf{(A)} for $R$, as $l(a_2c)$ and $l(ff')$ are disjoint in $T$, we must have $l(de)$ disjoint from these two lines in $T$, as well. Hence $l(de)$ crosses segment $a_1x_2$. Thus, $\beta(x_2)$ does not pass through $e$ and $c$, and $fe, ec, cb_1$ are minimal, so $\beta(x_2)$ has to cross $a_1c$ and either crosses $x_1b_1$ or contains $b_1$.\\
\indent Next, we show that $l(cf')$ crosses $x_1x_3$. If $\Delta_1$ has type $\tbty{B}{1}$, then this is true. If $l(cf')$ does not cross $x_1x_3$, then, $\Delta_1$ has type $\tbty{I}{1}$ or $\tbty{I}{2}$, but in that case, the only blocking segments that could cross $l(cf')$ in $\Delta_1$ are $\beta(x_1), \beta(a_1)$ and $\beta(b_1)$. Thus $\beta(x_2)$ passes through $b_1$. But $\beta(b_1)$ crosses the interior of $a_1c$, but types $\tbty{I}{1}$ and $\tbty{I}{2}$ imply that $\beta(b_1)$ crosses $a_1x_1$, which is a contradiction.\\
\indent Hence, $l(cf')$ crosses $x_1x_3$. Then, by the assumption \textbf{(A)} for region $R$, $l(ef), l(cf'), l(a_2d)$ are concurrent at a point $q \not= x_2$. However, Proposition~\ref{ForbStruct1} gives a contradiction, when applied to the vertex $x_3$ and the segments $fq$ and $b_1b_2$.\\[3pt]
\indent Finally, we assume that there is an initial segment $de$ with $e \in cb_2$ and $d \in a_2x_3$. To obtain a contradiction, we consider the following three cases on the position of $p \not= d$, the other intersection of $l(de)$ with $\partial T$.\\[3pt]
\noindent\textbf{Case 4.1. Suppose that $p \in x_1a_1$.} Applying $(n-1)$-\ct~to the \tba~induced by $x_1pd$, we see that $d$ must be internally blocked in this region. It follows that $e$ is internally blocked in the region $ca_2de$. However, $pd$ and $b_1b_2$ satisfy the conditions of the \textbf{Case 1}, which is impossible.\\[3pt]
\noindent\textbf{Case 4.2. Suppose that $p \in a_1x_2$.} Applying $(n-1)$-\ct~to the \tba~induced by $x_1pd$ results in a contradiction as $a_1a_2$ and $b_1e$ cross, but $x_1$ is internally blocked.\\[3pt]
\noindent\textbf{Case 4.3. Suppose that $p \in x_2x_3$.} We actually have $p \in x_2b_2$ and Proposition~\ref{ForbStruct1} says that configuration where $x_3$ is externally blocked, and $b_1b_2$ and $dp$ cross is impossible.\qed\end{proof}

\begin{proposition} \label{ForbStruct3} Let $n \geq 1$ be an integer and suppose that $(n-1)$-\ct~holds. Let $\Delta = (T, \mathcal{S}, \mathcal{B})$ be a \tba~of size $n$, and let $x_1, x_2$ and $x_3$ be the vertices of the triangle $T$. Suppose there are blocking segments through $x_1$ and $x_3$. Suppose also that $a_1a_2, b_1b_2 \in \mathcal{S}$ are two initial segments, such that $a_1 \in x_1x_2, b_2 \in x_2x_3, a_2,b_1 \in x_1x_3$. Then $a_1a_2$ and $b_1b_2$ are disjoint.\end{proposition}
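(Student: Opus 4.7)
The plan is to follow the case-analysis template used in the proof of Proposition~\ref{ForbStruct2}, adapting it to the present blocking configuration. Assume for contradiction that $a_1a_2$ and $b_1b_2$ meet at a point $c$. Since the two crossing initial segments cut off the corners $x_1$ and $x_3$ respectively, a short geometric argument shows that the vertices on the edge $x_1x_3$ appear in the order $x_1, b_1, a_2, x_3$. I then split the argument into four cases based on whether the vertices $a_1$ and $c$ are internally or externally blocked inside the quadrilateral region $R \colon= x_2 a_1 c b_2$. The parity constraint (the number of internally blocked vertices of $R$ is even) couples the status of $b_2$ to the statuses of $a_1$, $c$, and $x_2$, and since the hypothesis does not assert the existence of $\beta(x_2)$, an auxiliary sub-split on whether $x_2$ is internally or externally blocked in $R$ may be needed in some of the cases.

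In each case, apply the $(n-1)$-\ct~to the sub-\tbas~$\Delta_1$ induced by the triangle $x_1 a_1 a_2$ and $\Delta_3$ induced by the triangle $x_3 b_1 b_2$. The key difference from Proposition~\ref{ForbStruct2} is that here $\Delta_3$ has its apex $x_3$ internally blocked (since $\beta(x_3)$ exists by hypothesis), just as $\Delta_1$ always has $x_1$ internally blocked. Consequently the type of each of $\Delta_1$ and $\Delta_3$ is restricted to $\tbty{B}{1}$ odd (with the apex playing the role of $y_2$ or $y_3$), $\tbty{I}{1}$, or $\tbty{I}{2}$. From these types the Classification Theorem yields concrete local information near $c$: additional vertices on the cut edges of $\Delta_1$ and $\Delta_3$, minimality of certain short sub-segments adjacent to $c$, and specific triples of concurrent initial and blocking segments inside each sub-\tba. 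The aim is then, in each case, to combine this local information with assumption~\textbf{(A)}, applied to a suitable minimal $\overline{\mathcal{S}}$-region containing $c$ or a $c$-neighbouring vertex on its boundary, to force a third initial segment through a vertex already carrying two (such as $a_2$, $b_1$, or $x_3$), thereby contradicting clause~\textbf{(i)} of the definition of a \tba. When a direct application of~\textbf{(A)} does not suffice, one instead extends a freshly produced initial segment to its second endpoint $p \in \partial T$ and invokes Proposition~\ref{ForbStruct1} (if the resulting configuration involves the externally blocked corner $x_2$) or Proposition~\ref{ForbStruct2} (if the configuration exhibits blocking through two appropriate corners).

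The hardest case will be the analogue of Case~4 of Proposition~\ref{ForbStruct2}, in which both $a_1$ and $c$ are externally blocked in $R$. There both $\Delta_1$ and $\Delta_3$ tend to be of intersecting type, so the local ladder structure is present on both sides of $c$ simultaneously, producing many interacting vertices and minimal segments. In this case the argument must track both the location of $\beta(x_2)$ (when it exists) and the endpoint on $\partial T$ of any newly produced initial segment, splitting into sub-cases according to which edge of $T$ that endpoint lies on, in order to reduce to a configuration already forbidden by Proposition~\ref{ForbStruct1} or~\ref{ForbStruct2} or to force the contradicting concurrency via assumption~\textbf{(A)}.
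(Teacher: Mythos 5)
Your outline correctly identifies the right toolkit and mirrors the paper's strategy in spirit: assume a crossing point $c$, split into cases according to which of the nearby vertices are internally or externally blocked (your split on $a_1,c$ in $x_2a_1cb_2$ is, via the parity lemma, just a repackaging of the paper's split on $c,b_1$ in the triangle $cb_1a_2$), apply the $(n-1)$-\ct~to the corner sub-\tbas~$x_1a_1a_2$ and $x_3b_1b_2$ (whose apexes are indeed internally blocked, so their types are confined to $\tbty{B}{1}$, $\tbty{I}{1}$, $\tbty{I}{2}$), and then use assumption \textbf{(A)} together with Propositions~\ref{ForbStruct1} and~\ref{ForbStruct2} to reach a contradiction. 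However, what you have written is a plan, not a proof: in no case do you actually extract the local vertices and minimal segments, locate the blocking segments, and exhibit the contradiction. Phrases such as ``the aim is then to force a third initial segment through a vertex already carrying two'' and ``one instead extends a freshly produced initial segment \dots and invokes Proposition~\ref{ForbStruct1} or~\ref{ForbStruct2}'' describe what a proof would do, but nothing in the proposal shows that each case in fact terminates this way.

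The gap is most serious precisely in the case you flag as hardest, where all of $c$, $b_1$, $a_2$ are externally blocked in $cb_1a_2$ (equivalently, $a_1$ and $c$ externally blocked in your quadrilateral). There the contradiction is not a one-step application of \textbf{(A)}: in the paper it requires first ruling out that $\beta(c)$ crosses $a_2x_3$ (and, by symmetry, $x_1b_1$) via two successive applications of \textbf{(A)} to carefully chosen minimal $\overline{\mathcal{S}}$-regions, so that $\beta(c)$ is forced to cross $a_1x_1$ and $b_2x_3$; then extracting from the types of the two corner sub-\tbas~initial segments $pf$ and $p'f'$ near the feet of $\beta(x_1)$ and $\beta(x_3)$; then excluding $l(pf)=l(p'f')$ by an appeal to Proposition~\ref{ForbStruct2}; and only then obtaining two crossing initial segments to which Proposition~\ref{ForbStruct1} applies at the corner $x_2$. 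None of these constructions (nor their analogues in the other cases, e.g.\ the ``$l(de)$ would meet $x_1x_3$ twice'' argument when $b_1$ is internally blocked) appear in your proposal, and there is no a priori reason the case analysis closes without them; so as it stands the argument is incomplete rather than wrong.
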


\begin{proof} Suppose contrary, let $c\colon= a_1a_2 \cap b_1b_2$. Depending on the blocking segments through $c$ and $b_1$, up to symmetry, we have the following three cases.
\begin{itemize}
\item[\textbf{Case 1}] In the region $cb_1a_2$, $c$ is externally blocked, while $b_1$ is internally blocked.
\item[\textbf{Case 2}] In the region $cb_1a_2$, both $b_1$ and $c$ are externally blocked.
\item[\textbf{Case 3}] In the region $cb_1a_2$, both $b_1$ and $c$ are internally blocked.
\end{itemize}

As before, we treat each case separately and we depict the steps of the proof in Figures~\ref{prop10case1Fig},~\ref{prop10case2Fig} and~\ref{prop10case3Fig}.\\

\noindent\framebox{\textbf{Case 1.}} Note that the vertex $a_2$ is internally blocked in the region $cb_1a_2$, and that $a_1$ and $b_2$ are internally blocked in the region $x_2a_1cb_2$. Let $d\colon= \beta(a_2) \cap cb_1, e\colon= \beta(b_1) \cap ca_2$. Applying the $(n-1)$-\ct~to the \tba~$\Delta_1$ induced by $a_1a_2x_1$, we see that $\Delta_1$ has type among $\tbty{I}{1}$ and $\tbty{I}{2}$. In either case, $dc$ and $db_1$ are minimal segments, and no initial segment may cross the interior of the segment $b_1e$. Similarly, by looking at the \tba~$\Delta_2$ induced by $b_1b_2x_3$, we have that $ce, ea_2$ are minimal, and no initial segment crosses the interior of $a_2d$. It follows that the initial segment through $d$, which is different from $cb_1$, must be $de$, so $de$ is an initial segment.\\
\indent However, the type of $\Delta_1$ implies that $l(de)$ crosses $x_1b_1$, while the type of $\Delta_2$ implies that $l(de)$ crosses $a_2x_3$, so $l(de)$ crosses segment $x_1x_3$ twice, which is impossible.\\[6pt]
\begin{figure}\begin{center}
\includegraphics[width = 0.7\textwidth]{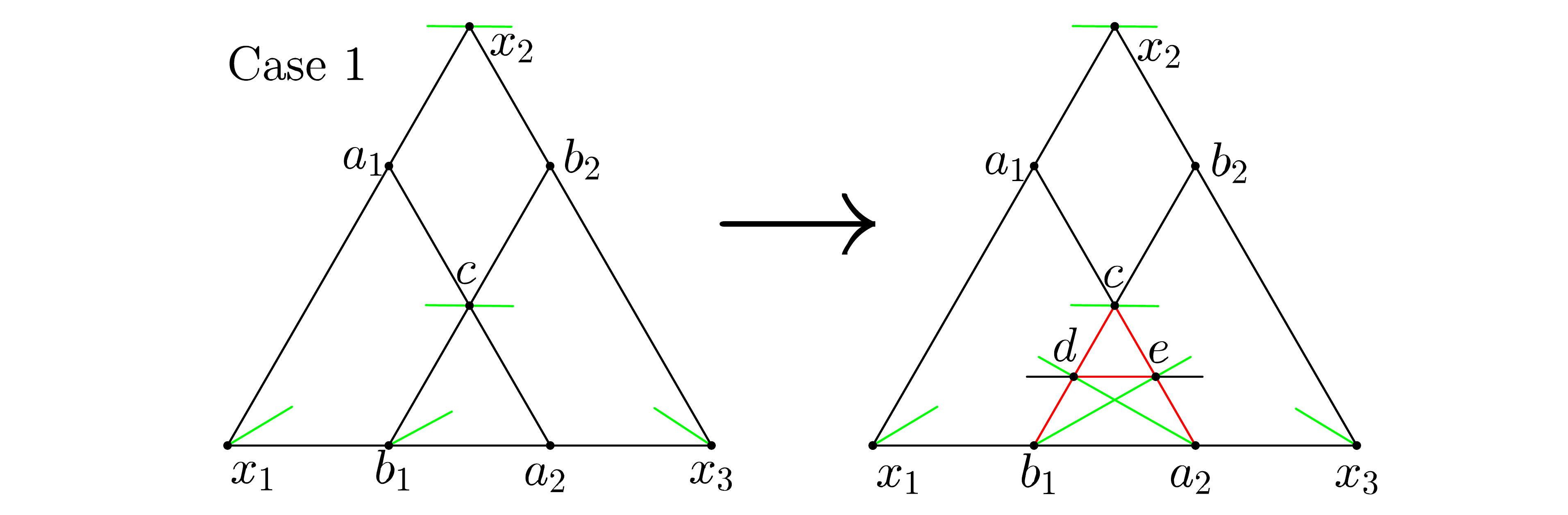}
\caption{Case 1 of Proposition~\ref{ForbStruct3}}
\label{prop10case1Fig}
\end{center}
\end{figure}
\noindent\framebox{\textbf{Case 2.}}Note that the vertex $a_2$ is externally blocked in the region $cb_1a_2$, and that $a_1$ and $b_2$ are externally blocked in the region $x_2a_1cb_2$. Applying the $(n-1)$-\ct~to the \tba~$\Delta_1$ induced by $a_1a_2x_1$, the type of $\Delta_1$ is among $\tbty{B}{1}, \tbty{I}{1}$ or $\tbty{I}{2}$. In either case, the segment $b_1c$ is minimal. Similarly, looking at the \tba~$\Delta_2$ induced by $b_1b_2x_3$, the segment $ca_2$ is also minimal.\\
\indent Suppose for a moment that $\beta(c)$ crosses $a_2x_3$, at a point $e$. Let $R$ be the minimal $\overline{\mathcal{S}}$-region with vertices $b_1$ and $c$, but not $a_2$. Let $u, v, w$ be the vertices of $R$, such that $u, b_1, c, v, w$ are consecutive and appear in that order on $\partial R$, thus $wv, vc$ are minimal. As $l(ub_1)$ and $\beta(c)$ intersect at $e$, by the assumption \textbf{(A)}, $d \in l(vw)$. Also $l(vw)$ crosses $cb_2$, let $d$ be their intersection point. From the $(n-1)$-\ct~applied to $\Delta_2$, we see that $cd, de, a_2e$ are minimal and $a_2d$ is blocking. As $cd, cv$ are minimal, so is $vd$. But, if we look at the minimal $\overline{\mathcal{S}}$-region $R'$, with vertices $v,d$, but not $c$, since $l(vc) = l(s_1)$ and $\beta(d)$ intersect at $a_2$ (where $s_1$ is the segment of $\partial R'$ through $v$, different from $vd$), it follows by the assumption \textbf{(A)}, that $a_2 \in l(s_2)$ for another segment $s_2$ of $\partial R'$. However, $a_2 \in x_1x_3$, and $x_1x_3$ is disjoint from $R'$, which is a contradiction. Therefore, $\beta(c)$ is disjoint from $a_2x_3$, and by symmetry $\beta(c)$ is also disjoint from $x_1b_1$.\\
\indent Thus $\beta(c)$ crosses $a_1x_1$, at some point $d$, and crosses $b_2x_3$, at some point $e$. Further, $\beta(x_1)$ crosses $a_1c$, at a point $f$, and $\beta(x_3)$ crosses $b_2c$, at a point $f'$. From the types of $\Delta_1$ and $\Delta_2$, we also have that $fc, f'c$ are minimal, and there are points $p \in a_1d$, and $p' \in b_2e$, such that $pf$ and $p'f'$ are initial segments. Further, also from the types of $\Delta_1$ and $\Delta_2$, we have that if an initial segment $s$ crosses $fx_1$, then $s$ must have one vertex on $a_1x_1$ and the other on $x_1b_1$, and we have that if an initial segment $s$ crosses $x_3f'$, then $s$ has one vertex on $b_2x_3$, and the other on $a_2x_3$. It follows that $l(pf) \cap \operatorname{int} f'x_3 = l(p'f') \cap \operatorname{int} fx_1 = \emptyset$.\\
\indent Observe that if it happens that $l(pf) = l(p'f')$, then Proposition~\ref{ForbStruct2}~applies to segments $pp', a_1a_2$ inside $x_1x_2x_3$ to give a contradiction. Therefore, $l(pf)$ crosses $x_2x_3$ at some point $q \not= p'$ (and $x_1x_2$ at $p$) and $l(p'f')$ crosses $x_1x_2$ at some point $q' \not= p$ (and $x_2x_3$ at $p'$). Finally, as $cf$ and $cf'$ are minimal, $pq$ and $p'q'$ must cross, but then the segments $pq, p'q'$ and the vertex $x_2$ are in a contradiction with Proposition~\ref{ForbStruct1}.\\[6pt]
\begin{figure}\begin{center}
\includegraphics[width = 0.7\textwidth]{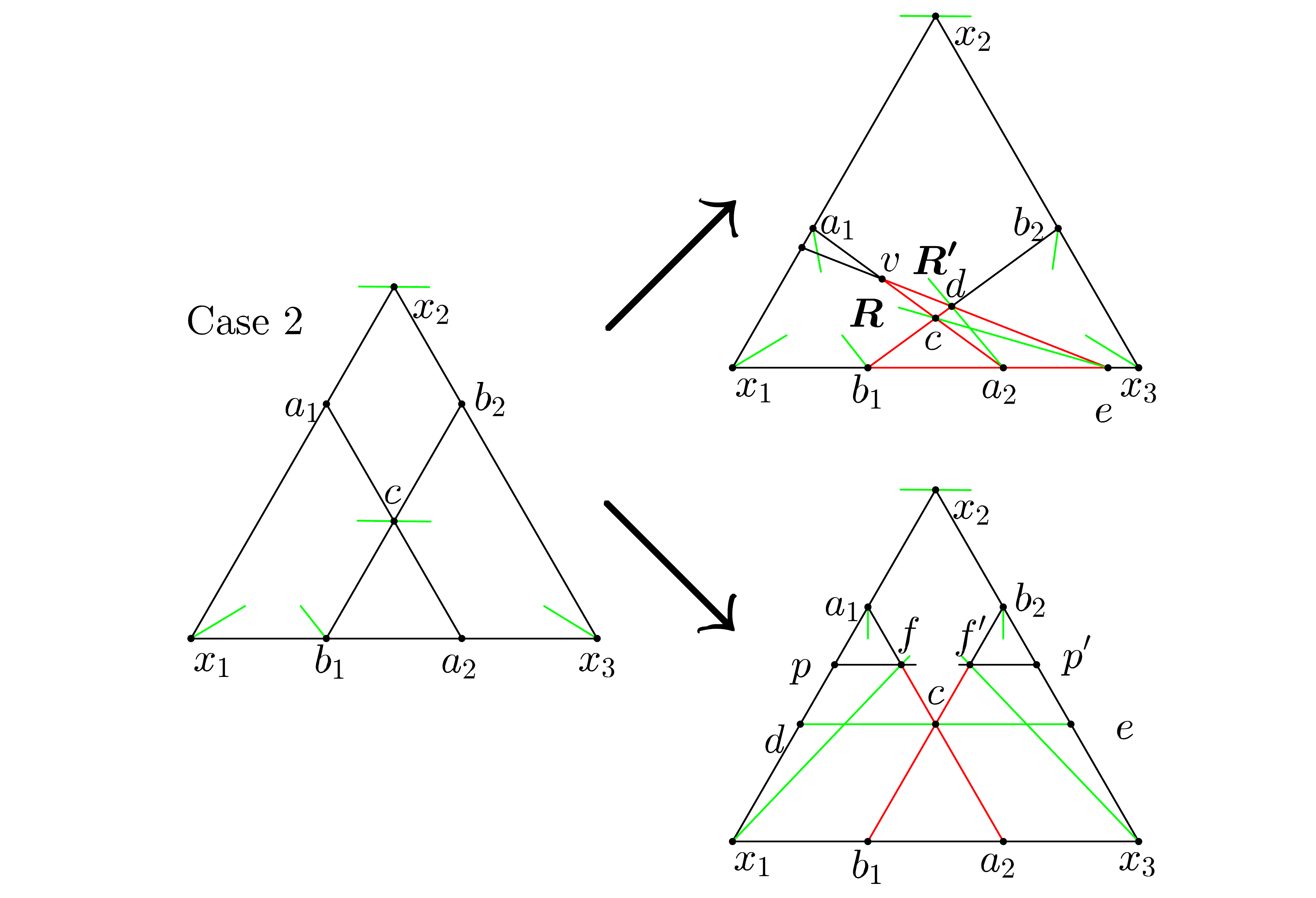}
\caption{Case 2 of Proposition~\ref{ForbStruct3}}
\label{prop10case2Fig}
\end{center}
\end{figure}
\noindent\framebox{\textbf{Case 3.}} Note that the vertex $a_2$ is externally blocked in the region $cb_1a_2$, and that $a_1$ is externally and $b_2$ is internally blocked in the region $x_2a_1cb_2$. Applying the $(n-1)$-\ct~to the \tba~$\Delta_1$ induced by $a_1a_2x_1$, we have that the type of $\Delta_1$ is among $\tbty{B}{1}, \tbty{I}{1}$ and $\tbty{I}{2}$. In either case, $b_1c$ is a minimal segment. Next, applying the $(n-1)$-\ct~to the \tba~$\Delta_2$ induced by $b_1b_2x_3$, the type of $\Delta_2$ is either $\tbty{I}{1}$ or $\tbty{I}{2}$. Since $b_1c$ is minimal, setting $d \colon= \beta(c) \cap b_1a_2, e\colon= \beta(b_1) \cap ca_2$, we must have that $de$ is an initial segment, $l(de)$ crosses $cb_2$, at some point $f$, and $cf, ce, fe, de, ea_2,da_2$ are minimal. Let $R$ be a minimal $\overline{\mathcal{S}}$-region such that $c,f \in \partial R$, but $e \notin \partial R$. Let $u, v, w$ be the vertices of $R$, such that $u, f, c, v, w$ are consecutive and appear in this order on $\partial R$. Since $\beta(c)$ and $l(uf) = l(fe)$ meet at $d$, by the assumption \textbf{(A)}, it follows that $d \in l(vw)$. However, the other initial segment through $d$, apart from $de$, is $x_1x_3$, which is disjoint from $R$, which is a contradiction.
\begin{figure}\begin{center}
\includegraphics[width = 0.7\textwidth]{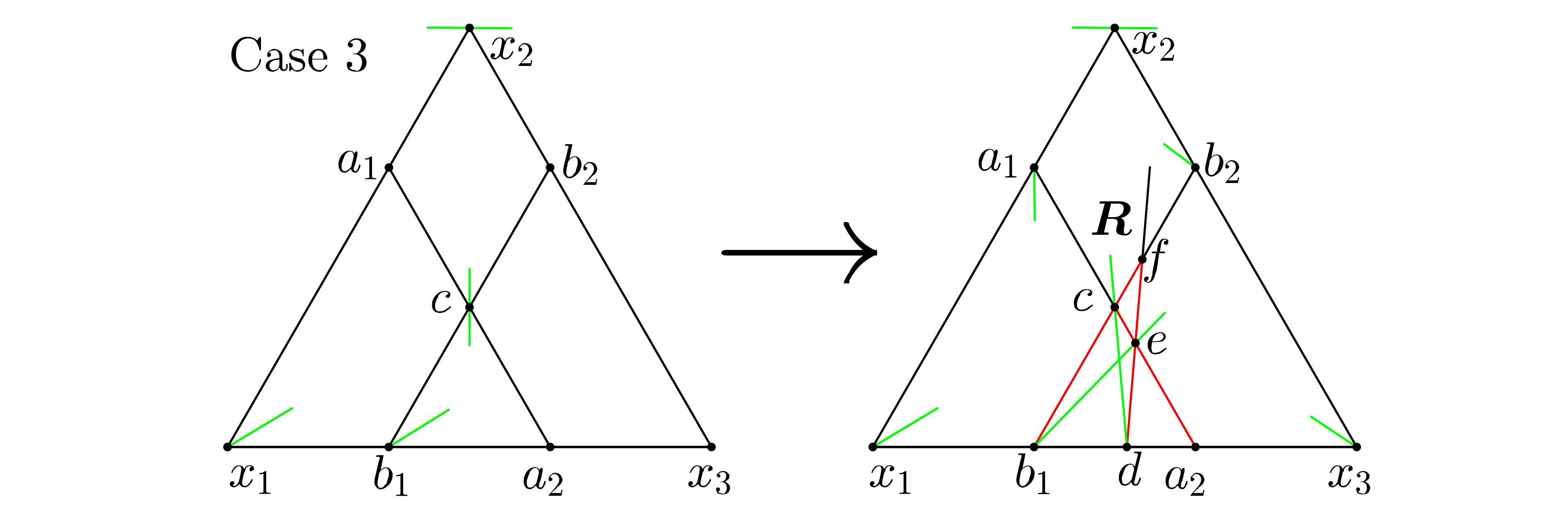}
\caption{Case 3 of Proposition~\ref{ForbStruct3}}
\label{prop10case3Fig}
\end{center}
\end{figure}\qed\end{proof}

\begin{lemma}\label{triangularBlocksLemma} Let $n \geq 1$ be an integer and suppose that $(n-1)$-\ct~holds. Let $\Delta = (T, \mathcal{S}, \mathcal{B})$ be a \tba~of size $n$, and let $x_1, x_2$ and $x_3$ be the vertices of the triangle $T$. Suppose that there are vertices $a_1 \in x_1x_2, a_2,b_1 \in x_1x_3, b_2 \in x_2x_3$ such that $a_1a_2$ and $b_1b_2$ are initial segments, intersecting at a point $c$. Then, in the region $a_1cb_2x_2$, the vertices $c, x_2$ are externally blocked and $a_1, b_2$ are internally blocked. Also, $x_1, x_3$ are externally blocked, and in the region $cb_1a_2$, the vertices $a_2,b_1$ are externally blocked.\end{lemma}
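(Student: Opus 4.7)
The plan is three-staged: (a) show that no corner $x_i$ of $T$ has a blocking segment through it, (b) determine the direction of $\beta(a_1), \beta(a_2), \beta(b_1), \beta(b_2)$ by applying the $(n-1)$-\ct~to two sub-\tbas, and (c) derive the status of $c$ by parity.

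For stage (a), since $a_1a_2$ and $b_1b_2$ intersect at $c$ with the endpoint pattern matching the hypotheses of Propositions~\ref{ForbStruct2} and~\ref{ForbStruct3}, the coexistence of $\beta(x_1)$ and $\beta(x_2)$ is ruled out by Proposition~\ref{ForbStruct2}, that of $\beta(x_1)$ and $\beta(x_3)$ by Proposition~\ref{ForbStruct3}, and that of $\beta(x_2)$ and $\beta(x_3)$ by Proposition~\ref{ForbStruct2} under the symmetry $x_1 \leftrightarrow x_3$ of the configuration. Hence at most one of $\beta(x_1), \beta(x_2), \beta(x_3)$ exists; applying parity to $T$ itself viewed as an $\overline{\mathcal{S}}$-region with corners $x_1, x_2, x_3$ forces their total number to be even, so exactly zero exist.

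For stage (b), consider the sub-\tba~$\Delta_1$ induced by the triangle $x_1a_1a_2$; its edges lie in $\overline{\mathcal{S}}$ and $|\Delta_1|<n$, so the $(n-1)$-\ct~applies. The chord $b_1c$, obtained from the portion of $b_1b_2$ inside $x_1a_1a_2$, is a proper initial segment of $\Delta_1$ across the corner $a_2$, so $\Delta_1$ is not of type $\tbty{B}{0}$. Parity in $\Delta_1$, using that $x_1$ is externally blocked there, gives that $a_1, a_2$ are either both internally or both externally blocked in $\Delta_1$. The main obstacle is ruling out the former: the pattern ``$x_1$ externally blocked, $a_1, a_2$ internally blocked'' is compatible only with the types $\tbty{B}{1}$ (odd, first vertex $x_1$), $\tbty{I}{1}$, or $\tbty{I}{2}$ (the last two with $y_1=x_1$). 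The $\tbty{B}{1}$ case is excluded because $b_1c$ is across $a_2$, not across $x_1$. In the types $\tbty{I}{1}$ and $\tbty{I}{2}$, the only labelling that admits $b_1c$ in the prescribed $u_iv_j$ pattern is $y_2=a_2$, and then the auxiliary segment $z_2z_3$ demanded by the type extends in $\Delta$ to an initial segment across $x_1$, while the blocking segments $y_2z_3=a_2z_3$ and $y_3v_{2k}=a_1v_{2k}$ together with the concurrency condition built into these types can be combined, via an application of assumption~\textbf{(A)} to a minimal $\overline{\mathcal{S}}$-region adjacent to $b_1c$, to produce a further initial segment whose existence contradicts Proposition~\ref{ForbStruct1} applied at the externally blocked corner $x_1$. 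Therefore $a_1, a_2$ are both externally blocked in $\Delta_1$, which means $\beta(a_1)$ heads into $a_1cb_2x_2$ and $\beta(a_2)$ heads into $cb_2x_3a_2$; this makes $a_1$ internally blocked in $a_1cb_2x_2$ and $a_2$ externally blocked in $cb_1a_2$. The symmetric argument applied to the sub-\tba~$\Delta_2$ induced by $x_3b_1b_2$ yields that $b_1$ is externally blocked in $cb_1a_2$ and $b_2$ is internally blocked in $a_1cb_2x_2$.

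For stage (c), parity closes the argument: in the triangle $cb_1a_2$ both $b_1, a_2$ are externally blocked, so $c$ must be externally blocked there; in the quadrilateral $a_1cb_2x_2$ both $a_1, b_2$ are internally blocked and $x_2$ is externally blocked, so $c$ must be externally blocked there as well. The principal difficulty throughout is the careful elimination of the ``both internally blocked'' alternative for $\Delta_1$ (and symmetrically $\Delta_2$) in stage (b); the remainder is parity combined with the three propositions already established.
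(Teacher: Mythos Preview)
Your overall architecture is sound and actually cleaner than the paper's: the paper does a direct three-case analysis on the positions of $\beta(c)$ and $\beta(a_2)$ (Cases 1 and 2 being eliminated by ad hoc geometric contradictions using both sub-\tbas\ simultaneously), whereas you aim to pin down the blocking directions of $a_1,a_2$ via $\Delta_1$ alone, then $b_1,b_2$ via $\Delta_2$, and close with parity. Stage (a) and stage (c) are fine.

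The gap is in stage (b), precisely where you flag the ``main obstacle''. Your elimination of the alternative ``$a_1,a_2$ both internally blocked in $\Delta_1$'' is not a proof: you invoke the segment $z_2z_3$, assumption~\textbf{(A)} applied to an unspecified region, and an unspecified ``further initial segment'', and you only mention Proposition~\ref{ForbStruct1} at $x_1$. None of this is pinned down, and the ingredients you list (in particular $z_2z_3$ and assumption~\textbf{(A)}) are not the right ones. Note that $z_2z_3$ and $a_1a_2$ do \emph{not} intersect, so Proposition~\ref{ForbStruct1} at $x_1$ does not apply to that pair.

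What does work, and makes your route go through, is much simpler. In the $\tbty{I}{1}$ or $\tbty{I}{2}$ structure on $\Delta_1$ (with $y_1=x_1$), the chord $b_1c$ is one member of an intersecting pair of initial segments; call the other member $u'v'$ with $u'\in x_1a_2\subset x_1x_3$ and $v'\in a_1a_2$, crossing $b_1c$ at the concurrency point $p$. In $\Delta$ the full segment $s(u'v')$ has one endpoint $u'\in x_1x_3$ and, since it passes through $v'\in\intr(a_1a_2)$, its second endpoint lies on $x_1x_2$ or on $x_2x_3$. In the first case $s(u'v')$ and $a_1a_2$ are two intersecting initial segments between $x_1x_2$ and $x_1x_3$, contradicting Proposition~\ref{ForbStruct1} at the externally blocked $x_1$. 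In the second case $s(u'v')$ and $b_1b_2$ are two intersecting initial segments between $x_1x_3$ and $x_2x_3$ (they meet at $p$), contradicting Proposition~\ref{ForbStruct1} at the externally blocked $x_3$. Either way you get the contradiction, without assumption~\textbf{(A)} and without $z_2z_3$; the symmetric argument then handles $\Delta_2$. Replace your hand-waved paragraph by this and the proof is complete.
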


\begin{proof} By Propositions~\ref{ForbStruct2} and~\ref{ForbStruct3}, we have $x_1, x_2, x_3$ all externally blocked. Looking at the blocking segments $\beta(c)$ and $\beta(a_2)$, we have the following three cases, up to symmetry.
\begin{itemize}
\item[\textbf{Case 1}] In the region $a_1cb_2x_2$, the vertices $a_1, c, b_2, x_2$ are externally blocked. In the region $cb_1a_2$, the vertices $a_2,b_1$ are internally blocked.
\item[\textbf{Case 2}] In the region $a_1cb_2x_2$, the vertices $a_1, c$ are internally blocked and $b_2, x_2$ are externally blocked. In the region $cb_1a_2$, the vertex $b_1$ is internally blocked and $a_2$ is externally blocked.
\item[\textbf{Case 3}] In the region $a_1cb_2x_2$, the vertices $a_1, b_2$ are internally blocked and $c, x_2$ are externally blocked. In the region $cb_1a_2$, the vertices $a_2,b_1$ are externally blocked.
\end{itemize}
Observe that the \textbf{Case 3} is exactly the conclusion of the lemma, so we just need to discard the first two cases. As before, we depict the steps in the proof in Figure~\ref{triangleBlocksFig}.\\[6pt]
\begin{figure}
\begin{center}
\includegraphics[width = 0.7\textwidth]{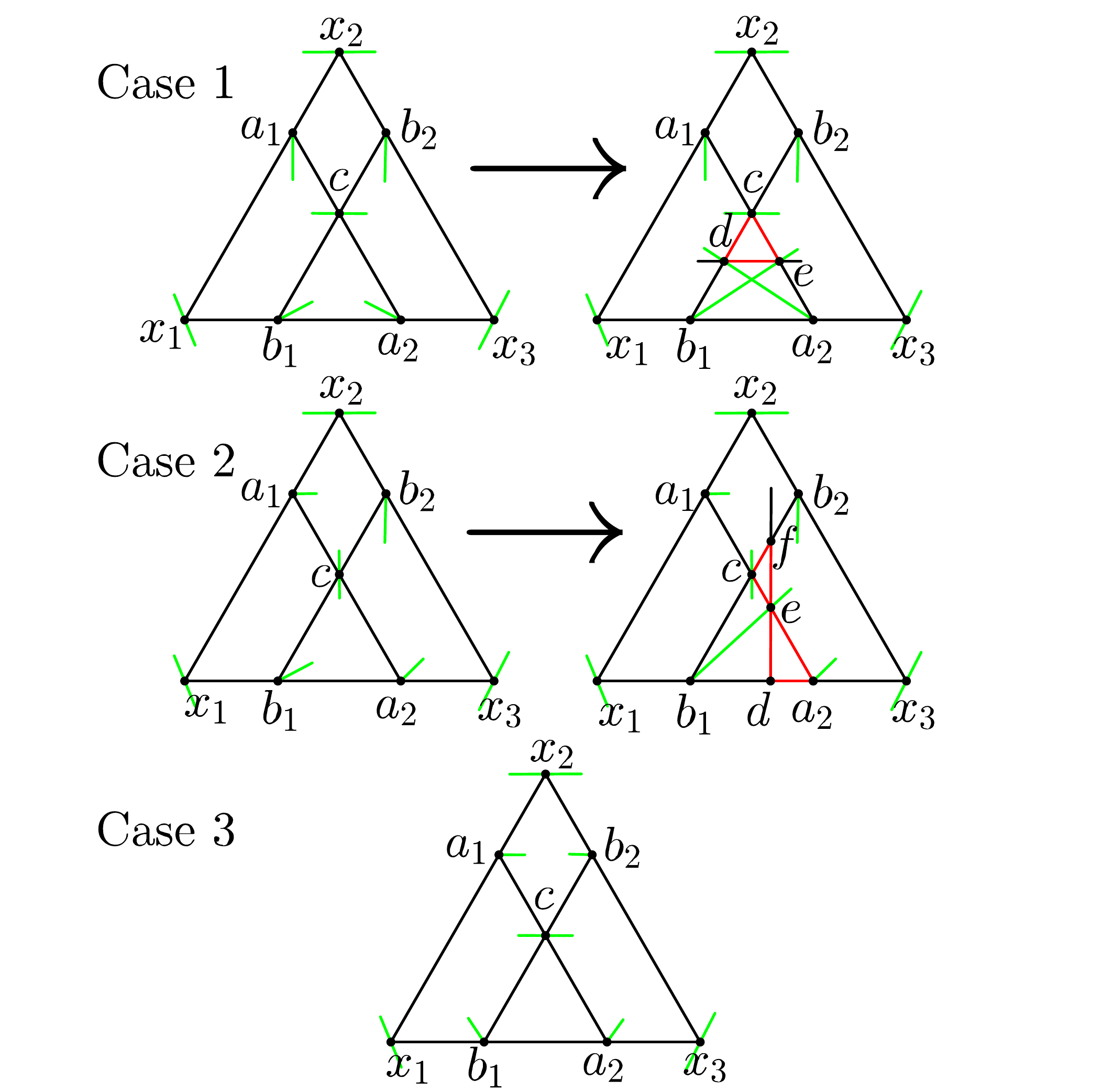}
\label{triangleBlocksFig}
\caption{Cases in the proof of Lemma~\ref{triangularBlocksLemma}}
\end{center}
\end{figure}
\noindent\framebox{\textbf{Case 1.}} Let $d \colon= \beta(a_2) \cap b_1c, e \colon= \beta(b_1) \cap a_2c$. By the $(n-1)$-\ct~applied to the \tba~$\Delta_1$ induced by $a_1a_2x_1$, the segment $cd$ is  minimal, and there is another initial segment through $d$ that crosses $ce$, possibly through $e$. Similarly, by applying the $(n-1)$-\ct~to the \tba~$\Delta_2$ induced by $b_1b_2x_3$, $ce$ is also minimal segment, so it follows that $de$ is itself a minimal initial segment. But, as $\Delta_1$ has type $\tbty{I}{1}$ or $\tbty{I}{2}$, it follows that $l(de)$ crosses $x_1b_1$. Similarly, from the type of $\Delta_2$, $l(de)$ crosses $a_2x_3$, hence $l(de)$ meets $x_1x_3$ twice, which is impossible.\\

\noindent\framebox{\textbf{Case 2.}} Let $e \colon= \beta(b_1) \cap ca_2$. By the $(n-1)$-\ct~applied to the \tba~$\Delta_1$ induced by $b_1b_2x_3$, $\Delta_1$ has type $\tbty{I}{1}$ or $\tbty{I}{2}$, and in particular, there are vertices $d \in b_1a_2,f \in cb_2$, such that $d,e,f$ lie on the same initial segment, and $ce,ef,fc,ed,da_2,a_2e$ are minimal. Let $p \not= d$, be the other intersection of the line $l(de)$ with $\partial T$. If $p \in x_2x_3$, then we have a substructure where $b_1b_2, dp$ intersect, and $x_3$ is externally blocked, which is forbidden by Proposition~\ref{ForbStruct1}, yielding a contradiction. Therefore, $p \in x_1x_2$. Once again, Proposition~\ref{ForbStruct1} gives contradiction, as $a_1a_2, dp$ intersect and $x_1$ is externally blocked.\qed\end{proof}

\begin{proposition}\label{triangleCoversProp}Let $n \geq 1$ be an integer and suppose that $(n-1)$-\ct~holds. Let $\Delta = (T, \mathcal{S}, \mathcal{B})$ be a \tba~of size $n$, and let $x_1, x_2$ and $x_3$ be the vertices of the triangle $T$. Suppose that there are vertices $a_3, b_3 \in x_1x_2$, appearing in order $x_1, b_3, a_3, x_2$, $c_2, a_2 \in x_1x_3$, appearing in order $x_1, c_2, a_2, x_3$, and $b_1, c_1 \in x_3x_2$, appearing in order $x_3, b_1, c_1, x_2$, such that $a_2a_3, b_1b_3, c_1c_2 \in \mathcal{S}$. Let $d_1 = b_1b_3\cap c_1c_2, d_2 = a_2a_3 \cap c_1c_2, d_3 = a_2a_3\cap b_1b_3$. Suppose additionally that $d_1$ is in the interior of $x_1a_2a_3$. Then $\Delta$ has type $\trt$.\end{proposition}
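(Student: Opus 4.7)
The plan is to reconstruct the full type~$\trt$ structure around the three given chords $a_2 a_3$, $b_1 b_3$, $c_1 c_2$ by combining blocking data from Lemma~\ref{triangularBlocksLemma} with a recursive application of the $(n-1)$-\ct~to a corner sub-\tba.

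First, I would apply Lemma~\ref{triangularBlocksLemma} in turn to the three intersecting pairs $(a_2 a_3, b_1 b_3)$, $(a_2 a_3, c_1 c_2)$, and $(b_1 b_3, c_1 c_2)$; in each pair the two segments share one of the edges of $T$ in the sense required by that lemma (namely $x_1 x_2$, $x_1 x_3$, and $x_2 x_3$ respectively). Together, the three applications yield that $x_1, x_2, x_3$ are externally blocked, that the central vertices $d_1, d_2, d_3$ are externally blocked, and that each of the six endpoints $a_2, a_3, b_1, b_3, c_1, c_2$ is externally blocked in its adjacent corner triangle and internally blocked in its adjacent middle quadrilateral region. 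This blocking pattern already matches the one prescribed by type~$\trt$ at the six outermost vertices of its three parallel families of initial segments.

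Next, I would analyze the sub-\tba $\Delta' = \Delta_{T'}$ induced by the corner triangle $T' = x_1 a_3 a_2$. Since $a_2 a_3 \in \mathcal{S}$ becomes an edge of $T'$ and hence is no longer a proper initial segment of $\Delta'$, we have $|\Delta'| < n$, so the $(n-1)$-\ct~applies to $\Delta'$. The hypothesis $d_1 \in \operatorname{int}(T')$ forces the restrictions $b_3 d_3$ and $c_2 d_2$ of $b_1 b_3$ and $c_1 c_2$ to be proper initial segments of $\Delta'$ that cross in the interior of $T'$ at $d_1$. Going through the seven possible types: the four basic types $\tbty{B}{0}$, $\tbty{B}{1}$, $\tbty{B}{2}$, $\tbty{B}{3}$ all have the property that pairs of proper initial segments meet only on $\partial T'$ (as recorded in the remark after the definition of $\trt$) and are ruled out; the intersecting types $\tbty{I}{1}$ and $\tbty{I}{2}$ place their interior crossings at specific concurrency points with a rigid blocking template which, when confronted with the blocking data at $d_1$ from the first step, is seen to be incompatible (in particular, in these types the sub-\tba induced by the inner triangle $y_1 z_2 z_3$ is of a basic type, which does not match the structure of $\Delta'$). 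Hence $\Delta'$ must be of type~$\trt$, say with parameter $k' \geq 2$, yielding three parallel families of proper initial segments inside $T'$, one near each of its vertices $x_1, a_3, a_2$, with the families near $a_3$ and $a_2$ containing $b_3 d_3$ and $c_2 d_2$ respectively.

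By running the same analysis at the two other corner sub-\tbas $\Delta_{x_2 b_3 b_1}$ and $\Delta_{x_3 c_1 c_2}$ -- in which the required interior crossings $d_2$ and $d_3$ are furnished by the $\trt$-structure of $\Delta'$ together with the list of segments on $\partial T$ accumulated so far -- I would obtain analogous $\trt$-structure at the other two corners. Each proper initial segment of the family near $a_3$ of $\Delta'$ extends across the chord $a_2 a_3$ into a full chord of $T$ whose other part, by matching endpoints on $\partial T$, belongs to the family near $b_3$ of $\Delta_{x_2 b_3 b_1}$; performing this identification at all three pairs of adjacent corners produces coherent sequences $u_1, \dots, u_{2k}$ on $x_1 x_2$, $v_1, \dots, v_{2k}$ on $x_2 x_3$, $w_1, \dots, w_{2k}$ on $x_3 x_1$ with a common $k = k' + 1$, together with the three required families of initial segments of type~$\trt$. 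The blocking segments in $\mathcal{B}$ and the innermost corner sub-\tbas $T_1, T_2, T_3$ of type~$\trt$ are inherited from the three recursive structures, and the concurrency relations at the points $p_{a,b,c}$ follow from assumption~\textbf{(A)} applied to the appropriate minimal $\overline{\mathcal{S}}$-regions, much as in the proofs of Propositions~\ref{ForbStruct1}--\ref{ForbStruct3}.

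The main obstacle I expect is the second step: ruling out the intersecting types $\tbty{I}{1}$ and $\tbty{I}{2}$ for $\Delta'$ requires a careful case analysis matching the rigid interior-crossing and blocking templates of those types against the blocking data at $d_1$ from the first step. A secondary bookkeeping-heavy difficulty is verifying in the third step that the recursive parameter $k'$ agrees at all three corners and that no proper initial segment of $\Delta$ lives outside the union of the three corner sub-\tbas' supports together with the central region between them.
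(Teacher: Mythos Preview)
Your overall plan---apply the $(n-1)$-\ct\ to each of the three corner sub-\tbas\ $x_1a_3a_2$, $x_2b_3b_1$, $x_3c_1c_2$, conclude each has type $\trt$, and then stitch---matches the paper's proof exactly. However, you have inverted the difficulty of the two steps.

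Ruling out $\tbty{I}{1}$ and $\tbty{I}{2}$ for $\Delta'$ is \emph{not} the main obstacle and requires no blocking data from Lemma~\ref{triangularBlocksLemma} at all. The restrictions $b_3d_3$ and $c_2d_2$ in $\Delta'$ join \emph{different} pairs of edges of $T'$ (one goes from $x_1a_3$ to $a_3a_2$, the other from $x_1a_2$ to $a_3a_2$), while in each of the types $\tbty{B}{i}$, $\tbty{I}{1}$, $\tbty{I}{2}$ the only interior crossings of proper initial segments, if any, are between segments joining the \emph{same} pair of edges. So $\Delta'$ is forced to have type $\trt$ in one line; your detour through Lemma~\ref{triangularBlocksLemma} is unnecessary. (It can be made to work---the lemma gives that $x_1,a_2,a_3$ are all externally blocked in $T'$, which also kills the $\tbty{I}$-types---but the paper does not use it here.) Note also that once $d_1 \in \operatorname{int}(x_1a_2a_3)$, a two-line convexity argument (on $c_1c_2$ the order is $c_2,d_1,d_2,c_1$, and $c_1$ lies on the $x_2$-side of $b_1b_3$) gives $d_2 \in \operatorname{int}(x_2b_3b_1)$ and symmetrically $d_3 \in \operatorname{int}(x_3c_1c_2)$; you do not need the structure of $\Delta'$ to ``furnish'' these interior crossings.

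The genuine content is the stitching, which you under-specify. Your claim that the corner parameters coincide and equal $k'+1$ is premature: the paper names the three parameters $k,l,m$ separately, concatenates vertex sequences along each edge of $T$, and proves the three resulting lengths $p_u,p_v,p_w$ coincide by a parity/counting argument (consecutive $u_i,u_{i+1}$ send their second initial segments to \emph{different} edges of $T$, giving $p_{u,1}=p_{u,2}$; endpoint matching gives $p_{u,1}=p_{v,2}$, etc.). It then uses Proposition~\ref{ForbStruct1} repeatedly to pin down exactly which $u_i$ is joined to which $v_j$ or $w_j$, ruling out forbidden crossings, and finally reads off the blocking segments from disjointness. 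This is where the real work lies, and your sketch does not yet contain the key counting step or the appeals to Proposition~\ref{ForbStruct1}.
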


\begin{proof} Apply the $(n-1)$-\ct~to the \tba~$\Delta_a$ induced by $x_1a_3a_2$. As $b_1b_3 \cap \Delta_a$ and $c_1c_2 \cap \Delta_a$ intersect, it must have type $\trt$. By definition, there are an integer $k \geq 2$ and vertices $u^{(a)}_1, u^{(a)}_2, \dots,$ $u^{(a)}_{2k}$ appearing on $x_1a_3$, in that order from $x_1$ to $a_3$, there are vertices $v^{(a)}_1, v^{(a)}_2, \dots,$ $v^{(a)}_{2k}$ appearing on $a_3a_2$, in that order from $a_3$ to $a_2$, and there are vertices $w^{(a)}_1, w^{(a)}_2, \dots,$ $w^{(a)}_{2k}$ appearing on $a_2x_1$, in that order from $a_2$ to $x_1$, such that segments $u^{(a)}_iu^{(a)}_{i+1}, v^{(a)}_iv^{(a)}_{i+1},$ $w^{(a)}_iw^{(a)}_{i+1}$ are minimal, and the \tbas~induced by $x_1u^{(a)}_1w^{(a)}_{2k}, a_3v^{(a)}_1u^{(a)}_{2k}$ and $a_2w^{(a)}_1v^{(a)}_{2k}$, are of type $\tbty{B}{0}$ or $\tbty{B}{1}$, with minimal initial segments $u^{(a)}_1w^{(a)}_{2k}, v^{(a)}_1u^{(a)}_{2k}$ and $w^{(a)}_1v^{(a)}_{2k}$. The initial segments, outside the three small regions $x_1u^{(a)}_1w^{(a)}_{2k},$ $a_3v^{(a)}_1u^{(a)}_{2k},$ $a_2w^{(a)}_1v^{(a)}_{2k}$ are given by $u^{(a)}_{2i-1} w^{(a)}_{2k+2-2i}, v^{(a)}_{2i-1} u^{(a)}_{2k+2-2i},$ $w^{(a)}_{2i-1} v^{(a)}_{2k+2-2i}$, for $i = 1, 2, \dots, k$, and the blocking segments outside the three small regions are given by $w^{(a)}_{2i-1} u^{(a)}_{2k+2-2i},$ $u^{(a)}_{2i-1} v^{(a)}_{2k+2-2i},$ $v^{(a)}_{2i-1} w^{(a)}_{2k+2-2i}$, for $i = 1, 2, \dots, k$.\\
\indent Similarly, apply the $(n-1)$-\ct~to the \tba~$\Delta_b$ induced by $x_2b_1b_3$. It must have type $\trt$. By definition, there are an integer $l \geq 2$ and vertices $u^{(b)}_1, u^{(b)}_2, \dots, u^{(b)}_{2l}$ appearing on $b_3x_2$, in that order from $b_3$ to $x_2$, there are vertices $v^{(b)}_1, v^{(b)}_2, \dots, v^{(b)}_{2l}$ appearing on $x_2b_1$, in that order from $x_2$ to $b_1$, and there are vertices $w^{(b)}_1, w^{(b)}_2, \dots, w^{(b)}_{2l}$ appearing on $b_1b_3$, in that order from $b_1$ to $b_3$, such that segments $u^{(b)}_iu^{(b)}_{i+1},$ $v^{(b)}_iv^{(b)}_{i+1},$ $w^{(b)}_iw^{(b)}_{i+1}$ are minimal, and the \tbas~induced by $b_3u^{(b)}_1w^{(b)}_{2l},$ $x_2v^{(b)}_1u^{(b)}_{2l}$ and $b_1w^{(b)}_1v^{(b)}_{2l}$, are of type $\tbty{B}{0}$ or $\tbty{B}{1}$, with minimal initial segments $u^{(b)}_1w^{(b)}_{2l},$ $v^{(b)}_1u^{(b)}_{2l},$ $w^{(b)}_1v^{(b)}_{2l}$. The initial segments, outside the three small regions $b_3u^{(b)}_1w^{(b)}_{2l},$ $x_2v^{(b)}_1u^{(b)}_{2l},$ $b_1w^{(b)}_1v^{(b)}_{2l}$ are given by $u^{(b)}_{2i-1} w^{(b)}_{2l+2-2i},$ $v^{(b)}_{2i-1} u^{(b)}_{2l+2-2i},$ $w^{(b)}_{2i-1} v^{(b)}_{2l+2-2i}$, for $i = 1, 2, \dots, l$, and the blocking segments outside the three small regions are given by $w^{(b)}_{2i-1} u^{(b)}_{2l+2-2i},$ $u^{(b)}_{2i-1} v^{(b)}_{2l+2-2i},$ $v^{(b)}_{2i-1} w^{(b)}_{2l+2-2i}$, for $i = 1, 2, \dots, l$.\\
\indent Apply the $(n-1)$-\ct~to the \tba~$\Delta_c$ induced by $x_3c_1c_2$. It must have type $\trt$, so by definition, there are and integer $m \geq 2$ and vertices $u^{(c)}_1, u^{(c)}_2, \dots, u^{(c)}_{2m}$ appearing on $c_2c_1$, in that order from $c_2$ to $c_1$, there are vertices $v^{(c)}_1, v^{(c)}_2, \dots, v^{(c)}_{2m}$ appearing on $c_1x_3$, in that order from $c_1$ to $x_3$, and there are vertices $w^{(c)}_1, w^{(c)}_2, \dots,$ $w^{(c)}_{2m}$ appearing on $x_3c_2$, in that order from $x_3$ to $c_2$, such that segments $u^{(c)}_iu^{(c)}_{i+1},$ $v^{(c)}_iv^{(c)}_{i+1},$ $w^{(c)}_iw^{(c)}_{i+1}$ are minimal, and the \tbas~induced by $c_2u^{(c)}_1w^{(c)}_{2m},$ $c_1v^{(c)}_1u^{(c)}_{2m}$ and $x_3w^{(c)}_1v^{(c)}_{2m}$, are of type $\tbty{B}{0}$ or $\tbty{B}{1}$, with minimal initial segments $u^{(c)}_1w^{(c)}_{2m},$ $v^{(c)}_1u^{(c)}_{2m},$ $w^{(c)}_1v^{(c)}_{2m}$. The initial segments, outside the three small regions $c_2u^{(c)}_1w^{(c)}_{2m},$ $c_1v^{(c)}_1u^{(c)}_{2m},$ $x_3w^{(c)}_1v^{(c)}_{2m}$ are given by $u^{(c)}_{2i-1} w^{(c)}_{2m+2-2i},$ $v^{(c)}_{2i-1} u^{(c)}_{2m+2-2i},$ $w^{(c)}_{2i-1} v^{(c)}_{2m+2-2i}$, for $i = 1, 2, \dots, m$, and the blocking segments outside the three small regions are given by $w^{(c)}_{2i-1} u^{(c)}_{2l+2-2i},$ $u^{(c)}_{2i-1} v^{(c)}_{2m+2-2i},$ $v^{(c)}_{2i-1} w^{(c)}_{2m+2-2i}$, for $i = 1, 2, \dots, m$.\\

If we look at the \tba~induced by $a_3u^{(a)}_{2k}v^{(a)}_1$, which we know has type $\tbty{B}{0}$ or $\tbty{B}{1}$ with $u^{(a)}_{2k}v^{(a)}_1$ minimal, since $\Delta_b$ has type $\trt$, it follows that $a_3u^{(a)}_{2k}v^{(a)}_1$ actually is a minimal $\overline{\mathcal{S}}$-region, so $a_3u^{(a)}_{2k}$ is also minimal. Similarly, it follows that $c_1 v^{(c)}_1, b_1v^{(b)}_{2l}, a_2 w^{(a)}_1, c_2 w^{(c)}_{2m}$ and $b_3 u^{(b)}_1$ are minimal segments. Let $i_a, i_b, i_c$ be such that $a_3 = u^{(b)}_{i_a}, b_1 = v^{(c)}_{i_b}, c_2 = w^{(a)}_{i_c}$. Define 
\begin{itemize}
\item[] $u_i = u^{(a)}_i$, for $1\leq i \leq 2k$, and $u_{2k + i} = u^{(b)}_{i_a + i - 1}$, for $1\leq i \leq 2l + 1 - i_a$, 
\item[] $v_i = v^{(b)}_i$, for $1\leq i \leq 2l$, and $v_{2l + i} = v^{(c)}_{i_b + i - 1}$, for $1\leq i \leq 2m + 1 - i_b$,
\item[] $w_i = w^{(c)}_i$, for $1\leq i \leq 2m$, and $w_{2m + i} = w^{(a)}_{i_c + i - 1}$, for $1\leq i \leq 2k + 1 - i_c$.
\end{itemize}
Let $p_u = 2k + 2l + 1 - i_a, p_v = 2l + 2m + 1 - i_b, p_w = 2m + 2k + 1 - i_c$ (which are the lengths of these sequences). We now show that $p_u = p_v = p_w$. Let $p_{u,1}$ be the number of vertices $u_i$, whose initial segment $\not= x_1x_2$ crosses $x_2x_3$, let $p_{v,1}$ be the number of vertices $v_i$, whose initial segment $\not= x_2x_3$ crosses $x_3x_1$, let $p_{w,1}$ be the number of vertices $w_i$, whose initial segment $\not= x_3x_1$ crosses $x_1x_2$, and let $p_{u,2} = p_u - p_{u,1}, p_{v,2} = p_v - p_{v,1}, p_{w,2} = p_w - p_{w,1}$.\\
\indent Firstly, we show that $p_{u,1} = p_{u,2}$ (so by symmetry $p_{v,1}= p_{v,2}$ and $p_{w,1}=p_{w,2}$). Note immediately that the initial segment $u_1w_{p_w}$ crosses $x_1x_2$ and $x_1x_3$, and that the initial segment $u_{p_u} v_1$ crosses $x_1x_2$ and $x_2x_3$. Also, for any $i < p_u$, observe that, by the definition of type $\trt$, the initial segments $\not=x_1x_2$ through $u_i$ and $u_{i+1}$ cannot both intersect $x_1x_3$, nor can both intersect $x_2x_3$. To spell out details, looking at $\Delta_a$, if $i < 2k$, then one of $u_i, u_{i+1}$ has the other initial segment with a vertex on $x_1a_2$, and the second has the other initial segment with a vertex on $a_3a_2$. Write temporarily $q$ for this second vertex among $u_i, u_{i+1}$ and $r\in a_3a_2$ for the vertex such that $qr$ is initial. But $l(qr)$ crosses $a_3a_2$, and we must have $l(qr)$ cross $x_2x_3$, as otherwise we obtain a contradiction by Proposition~\ref{ForbStruct1}. We argue similarly for $i \geq 2k$, by considering $\Delta_b$. The claim follows.\\
\indent Secondly, we show $p_{u,1} = p_{v,2}$ (and by symmetry $p_{v,1} = p_{w,2}$ and $p_{w,1} = p_{u,2}$). But, if a segment crosses $x_1x_2$ at some $u_i$, and crosses $x_2x_3$ at a point $q$, then by minimality of $u_{p_u} v_1$ and $v_{p_v} w_1$, $q = v_j$ for some $j$. However, this is injective map $i \mapsto j$, so $p_{u,1} \geq p_{v,2}$, and by symmetry $p_{u,1} = p_{v,2}$. From these observations, it follows that $p_u = p_v = p_w$, and we may write $p$ for this common value.\\ 

Next, we show that initial and blocking segments at $u_i, v_i, w_i$ satisfy the conditions of the type $\trt$. As we have seen already, initial segments through $u_i$ are $u_1 w_{i_1}, u_3 w_{i_3}, \dots, u_{p-1} w_{i_{p-1}}$ and $u_2 v_{i_2}, u_4v_{i_4}, \dots, u_p v_{i_p}$, where $i_1, i_3, \dots,$ $i_{p-1} \in [p]$ are distinct and even, and $i_2, i_4, \dots, i_p \in[p]$ are distinct and odd. However, if $i_{j} > i_{j'}$ holds for some $j > j'$ of the same parity, then we obtain a contradiction by Proposition~\ref{ForbStruct1}. Hence, $i_{j} = p + 1 - j$, for all $j$, as desired, and a similar argument shows that all initial segments have desired structure. For blocking segments, observe that all initial and blocking segments between $x_1x_2$ and $x_2x_3$ through some $u_i$ are disjoint, so the blocking segments have the desired structure. The intersections structure follows from the structure of $\Delta_a, \Delta_b, \Delta_c$.\\  
\indent Finally, we know from before that the \tbas~induced by $x_1u_1w_{p}, x_2v_1u_{p}, x_3 w_1 v_{p}$ are of type $\tbty{B}{0}$ or $\tbty{B}{1}$, with minimal segments $u_1w_{p}, v_1u_{p}, w_1 v_{p}$. This completes the proof.\qed\end{proof}

\begin{corollary}\label{triangleS3}Let $n \geq 1$ be an integer and suppose that $(n-1)$-\ct~holds. Let $\Delta = (T, \mathcal{S}, \mathcal{B})$ be a \tba~of size $n$, and let $x_1, x_2$ and $x_3$ be the vertices of the triangle $T$. Suppose that there are vertices $a_3, b_3 \in x_1x_2$, appearing in order $x_1, b_3, a_3, x_2$, $c_2, a_2 \in x_1x_3$, appearing in order $x_1, c_2, a_2, x_3$, and $b_1, c_1 \in x_3x_2$, appearing in order $x_3, b_1, c_1, x_2$, such that $a_2a_3, b_1b_3, c_1c_2 \in \mathcal{S}$. Then $\Delta$ has type $\trt$.\end{corollary}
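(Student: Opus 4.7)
The plan is to deduce the corollary from Proposition~\ref{triangleCoversProp} by a symmetry argument. The prescribed orderings on the endpoints force the three chords to cross pairwise, so the three intersection points $d_1 = b_1b_3 \cap c_1c_2$, $d_2 = a_2a_3 \cap c_1c_2$, $d_3 = a_2a_3 \cap b_1b_3$ all lie inside $T$. I consider the three corner inclusions
\[
\textbf{(A)}\ d_1 \in \operatorname{int}(x_1a_2a_3), \quad \textbf{(B)}\ d_2 \in \operatorname{int}(x_2b_1b_3), \quad \textbf{(C)}\ d_3 \in \operatorname{int}(x_3c_1c_2),
\]
and observe that a cyclic relabelling of $x_1, x_2, x_3$ turns any one of them into the hypothesis of Proposition~\ref{triangleCoversProp}. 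Thus whenever any of \textbf{(A)}, \textbf{(B)}, \textbf{(C)} holds, the proposition immediately gives that $\Delta$ is of type $\trt$.

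The next step is to show that the three conditions \textbf{(A)}, \textbf{(B)}, \textbf{(C)} are equivalent. On the chord $c_1c_2$, the endpoint $c_2 \in x_1x_3$ lies on the $x_1x_3$-side of $b_1b_3$, while $c_1 \in b_1x_2 \subset x_2x_3$ lies on the $x_2$-side, so the crossing $d_1 = c_1c_2 \cap b_1b_3$ splits $c_1c_2$ into a $c_2d_1$-portion on the $x_1x_3$-side and a $d_1c_1$-portion on the $x_2$-side of $b_1b_3$. If \textbf{(A)} holds, meaning $d_1$ lies on the $x_1$-side of $a_2a_3$, then $d_1$ lies on the $c_2d_2$-portion of $c_1c_2$ (since $c_2$ is on the same side), forcing the order $c_2, d_1, d_2, c_1$ along $c_1c_2$. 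Consequently $d_2$ sits on the $d_1c_1$-portion and hence on the $x_2$-side of $b_1b_3$, i.e.\ \textbf{(B)} holds. The cyclic symmetry of the configuration then yields \textbf{(B)} $\Rightarrow$ \textbf{(C)} $\Rightarrow$ \textbf{(A)}, so all three are equivalent.

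It therefore suffices to rule out the case that \textbf{(A)}, \textbf{(B)}, \textbf{(C)} all fail. In this case, inside each of the corner sub-\tbas~$\Delta_a, \Delta_b, \Delta_c$ induced by $x_1a_3a_2, x_2b_1b_3, x_3c_1c_2$, the two chord-pieces contributed by the other two of the three chords of $\Delta$ are disjoint. By the $(n-1)$-\ct, a disjoint pair of chord-pieces from two different flavours does not fit types $\tbty{B}{0}, \tbty{B}{1}, \tbty{I}{1}, \tbty{I}{2}$, or $\trt$, and fits $\tbty{B}{2}$ or $\tbty{B}{3}$ only after adding further initial segments of $\Delta$ into the corner. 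Extending each such additional segment across $T$ and combining it with the blocking restrictions on $\beta(d_1), \beta(d_2), \beta(d_3)$ obtained from Lemma~\ref{triangularBlocksLemma} applied to each of the three chord pairs, I would produce a pair of initial segments of $\Delta$ crossing in a configuration forbidden by one of Propositions~\ref{ForbStruct1}, \ref{ForbStruct2}, or \ref{ForbStruct3}, yielding the desired contradiction.

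The most delicate step is this last contradiction: verifying that the extra initial segments forced at each corner by the $\tbty{B}{2}$/$\tbty{B}{3}$-structure must indeed interact across $T$ in a way that triggers one of the forbidden-structure propositions, rather than fitting together into a consistent global arrangement. The combined input from Lemma~\ref{triangularBlocksLemma} (which pins down the admissible directions of $\beta(d_1), \beta(d_2), \beta(d_3)$) and the additional-chord structure forced by the corner types is what I expect to localise such a forbidden crossing.
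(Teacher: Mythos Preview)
Your equivalence argument for \textbf{(A)}, \textbf{(B)}, \textbf{(C)} is correct and is a clean simplification that the paper does not isolate. However, your plan for the ``all fail'' case is aimed at the wrong target: this case is \emph{not} impossible, so no contradiction can be reached. Concretely, take a \tba~of type $\trt$ with $k = 2$ and trivial corners (each of type $\tbty{B}{0}$), and set $a_2a_3 = u_3w_2$, $b_1b_3 = v_3u_2$, $c_1c_2 = w_3v_2$; this is in fact the \emph{only} triple satisfying the ordering hypotheses of the corollary in that arrangement. From the concurrence relations in the definition of $\trt$ one has $d_1 = p_{4,3,3} \in u_4w_1$, and the whole segment $u_4w_1$ lies on the non-$x_1$ side of $a_2a_3 = u_3w_2$. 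Thus \textbf{(A)} fails, and by your own equivalence so do \textbf{(B)} and \textbf{(C)}. So the ``all fail'' case is realised by a genuine $\trt$ arrangement, and your proposed search for a crossing forbidden by Propositions~\ref{ForbStruct1}--\ref{ForbStruct3} cannot succeed: every crossing present is of the permitted kind already described by Lemma~\ref{triangularBlocksLemma}.

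What the paper does instead is exactly what your own analysis begins to uncover but misinterprets. Assuming (more strongly) that \emph{no} triple of chords in $\Delta$ satisfies the hypothesis of Proposition~\ref{triangleCoversProp}, the paper uses Lemma~\ref{triangularBlocksLemma}, Proposition~\ref{ForbStruct1}, and the $(n-1)$-classification to show that all of $a_3d_3,\,b_3d_3,\,a_3b_3,\dots$ and $d_1d_2,\,d_2d_3,\,d_3d_1$ are minimal. It then applies the $(n-1)$-\ct\ to each corner triangle $x_1a_3a_2,\,x_2b_3b_1,\,x_3c_2c_1$ and finds that each is of type $\tbty{B}{3}$; the ``additional'' segment your argument detects at each corner is precisely the extra chord that $\tbty{B}{3}$ supplies (in the example above, it is $u_1w_4$ at the $x_1$-corner), and these three extra chords, together with the associated blocking segments, are what assemble into the global $\trt$ structure. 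In short, the last step should be a \emph{construction} of the $\trt$ type from the three $\tbty{B}{3}$ corners, not a contradiction.
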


\begin{proof}Let $d_1 = b_1b_3\cap c_1c_2, d_2 = a_2a_3 \cap c_1c_2, d_3 = a_2a_3\cap b_1b_3$. By previous proposition, we may suppose that $d_1$ is not in the interior of $x_1a_2a_3$, and moreover that there are no triples of segments where each pair intersects and form a small triangle that satisfies the conditions of Proposition~\ref{triangleCoversProp}.\\
\indent By Lemma~\ref{triangularBlocksLemma}, we have that $x_1, x_2, x_3$ are externally blocked, and in regions $a_3b_3d_3, a_2c_2d_2, b_1c_1d_1$, the vertices $a_3, b_3, d_3, a_2, c_2, d_2, b_1, c_1, d_1$ are externally blocked as well. Suppose for a moment that $a_3d_3$ is not a minimal segment. Let $q \in a_3d_3$ be another vertex. Let $pr$ be another initial segment through $q$, with $p,r \in \partial T$, with $p \in a_3x_1 \cup x_1a_2$ and $r \in a_3x_2 \cup x_2x_3 \cup x_3a_2$. If $pr$ crosses $b_3b_1$, then either Proposition~\ref{ForbStruct1} gives a contradiction, for $pr$ and $b_3b_1$ or $pr$ and $a_3a_2$, or the segments $pr, b_3b_1, a_3a_2$ form a triple we forbade at the beginning of the proof. Hence, $pr \cap b_3b_1 = \emptyset$, so $p \in a_3b_3, r \in x_2b_1$. However, if we apply the $(n-1)$-\ct~to the \tba~induced by $x_2b_3b_1$, it must have type $\trt$, and it follows that there is an initial segment $s$ which crosses both $d_3a_3$ and $d_3b_3$. By Proposition~\ref{ForbStruct1}, it follows that $s$ crosses $x_1x_3$ and $x_2x_3$, thus $s, b_3b_1, a_3a_2$ forms a structure that we forbade at the beginning of the proof. This is a contradiction, and it follows similarly that $a_3d_3, b_3d_3, d_2a_2, d_2c_2, d_1b_1, d_1c_1$ are minimal, and further, $a_3b_3, a_2c_2, b_1c_1$ are minimal as well.\\
\indent Therefore, we have actually shown that any configuration of segments like $a_2a_3, b_1b_3, c_1c_2$ implies the minimality of the segments $a_3d_3, b_3d_3$, etc. Using this observation and Proposition~\ref{ForbStruct1}, it follows also that $d_1d_2, d_2d_3$ and $d_3d_1$ are minimal. Applying the $(n-1)$-\ct~to the \tbas~induced by $x_1a_3a_2, x_2b_3b_1$ and $x_3c_2c_1$, all three have the type $\tbty{B}{3}$, and there are vertices $a'_2 \in x_1c_2, a'_3 \in b_3x_1, b'_3 \in a_3x_2, b'_1 \in x_2c_1, c'_1 \in b_1x_3, c'_2 \in a_2x_3$, such that $c_2a'_2, a'_2a'_3, a'_3b_3, a_3b'_3, b'_3b'_1, b'_1c_1, b_1c'_1, c'_1c'_2, c'_2a_2$ are minimal initial segments, and $\beta(d_1) = b'_3c'_2, \beta(d_2) = c'_1a'_3, \beta(d_3) = a'_2b'_1$. From the types of same \tbas, it follows that \tbas~induced by $x_1a'_2a'_3, x_2b'_3b'_1, x_3c'_1c'_2$ are of type $\tbty{B}{0}$ or $\tbty{B}{1}$, with $a'_2a'_3, b'_1b'_3, c'_1c'_2$ minimal. Thus, $\Delta$ has the type $\trt$.\qed\end{proof}

\begin{proposition}\label{finalS3prop}Let $n \geq 1$ be an integer and suppose that $(n-1)$-\ct~holds. Let $\Delta = (T, \mathcal{S}, \mathcal{B})$ be a \tba~of size $n$, and let $x_1, x_2$ and $x_3$ be the vertices of the triangle $T$. Suppose that there are vertices $a_1 \in x_1x_2, a_2,b_1 \in x_1x_3, b_2 \in x_2x_3$ such that $a_1a_2$ and $b_1b_2$ are initial segments, intersecting at a point $c$. Then $\Delta$ has type $\trt$.\end{proposition}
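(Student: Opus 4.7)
The plan is to produce a third initial segment in $\mathcal{S}$ whose endpoints lie on $x_1x_2$ and $x_2x_3$, so that together with $a_1a_2$ and $b_1b_2$ the hypothesis of Corollary~\ref{triangleS3} is met, and then $\Delta$ must be of type $\trt$.

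First, I would invoke Lemma~\ref{triangularBlocksLemma} to pin down the blocking behaviour near $c$: $x_1, x_2, x_3$ are externally blocked; in the quadrilateral $a_1cb_2x_2$ the vertices $a_1, b_2$ are internally blocked and $c, x_2$ externally blocked; and in the triangle $cb_1a_2$ the vertices $a_2, b_1$ are externally blocked. Consequently $\beta(a_2)$ enters $T_1\colon= x_1a_1a_2$, $\beta(b_1)$ enters $T_3\colon= x_3b_1b_2$, and $\beta(c)$ enters both $T_1$ and $T_3$ (since $c$ is externally blocked in the quadrilateral, $\beta(c)$ must traverse the opposite pair of regions around $c$). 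Applying the $(n-1)$-\ct~to the sub-\tbas~$\Delta_1$ and $\Delta_3$ induced by $T_1$ and $T_3$, the internally blocked vertices at $a_2, c$ (resp.\ $b_1, c$), combined with the restricted arrangement inherited from $\Delta$, rule out the purely basic types and leave type $\tbty{I}{1}$ or $\tbty{I}{2}$. Reading off the definitions of those types then yields minimality of $a_1c, ca_2, b_1c, cb_2$ and pins down the initial segments in small neighbourhoods of $c$ inside $T_1$ and $T_3$.

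The crux of the argument is to analyse the blocking segment $\beta(a_1)$, which enters the quadrilateral $a_1cb_2x_2$. Since $\beta(a_1)\neq x_1x_2$, its other endpoint lies on $x_2x_3$. I would split into cases on whether $\beta(a_1)$ stays in the quadrilateral or crosses $a_1c$ or $cb_2$ on the way out, and use Propositions~\ref{ForbStruct1},~\ref{ForbStruct2} and~\ref{ForbStruct3} to eliminate the forbidden sub-configurations. In each surviving case, the exit point $p \in x_2b_2$ carries, by condition \textbf{(ii)}, an initial segment $pq \in \mathcal{S}$; a symmetric analysis of $\beta(b_2)$ together with another round of the forbidden-substructure propositions forces $q \in x_1x_2$.

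Armed with three initial segments $a_1a_2, b_1b_2, pq$, one per pair of edges of $T$ and in the correct vertex order on each edge, Corollary~\ref{triangleS3} applies directly and yields that $\Delta$ has type $\trt$. The principal obstacle is the case analysis of the trajectory of $\beta(a_1)$ (and $\beta(b_2)$): several sub-cases must be tracked depending on how these blocking segments meet the interior edges of $a_1cb_2x_2$ and on the precise types of $\Delta_1$ and $\Delta_3$, and the forbidden-substructure propositions established earlier in the section are exactly the tools that kill each unwanted branch.
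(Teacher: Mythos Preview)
There is a genuine error in your second step. Lemma~\ref{triangularBlocksLemma} tells you that in the region $cb_1a_2$ the vertex $a_2$ is \emph{externally} blocked. Since the angle of $cb_1a_2$ at $a_2$ coincides with the angle of $T_1=x_1a_1a_2$ at $a_2$ (both are bounded by the rays along $a_1a_2$ and along $x_1x_3$ towards $x_1$), the vertex $a_2$ is also externally blocked in $T_1$. Likewise $a_1$ is externally blocked in $T_1$ (its blocking segment enters the quadrilateral, not $T_1$), and $x_1$ is externally blocked by the same lemma. Thus all three corners of $T_1$ are externally blocked, and the types $\tbty{I}{1}$, $\tbty{I}{2}$ --- which require an internally blocked corner --- are impossible for $\Delta_1$. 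Your claim that $\Delta_1$ must have one of these types is therefore wrong, and with it the advertised minimality of $a_1c$, $ca_2$ and the subsequent analysis of $\beta(a_1)$ lose their foundation.

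In fact the paper shows the opposite: in the residual case $\Delta_1$ is forced to have type $\tbty{B}{3}$, and this is exactly what drives the contradiction. The paper's route is to list all vertices $c_1,\dots,c_r$ on $a_1a_2$, write $p_iq_i$ for the second initial segment through $c_i$, observe (via Proposition~\ref{ForbStruct1}) that each $q_i\in x_2x_3$, and note that if for some $i$ one has $p_i\in x_1x_3$ and $p_{i+1}\in x_1x_2$ then Corollary~\ref{triangleS3} finishes. Otherwise the $p_i$'s split monotonically, Lemma~\ref{triangularBlocksLemma} forces all corners of each $a_1p_ic_i$ (resp.\ $a_2p_ic_i$) to be externally blocked, and the $(n-1)$-\ct\ on $x_1a_1a_2$ leaves only $\tbty{B}{3}$. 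From the $\tbty{B}{3}$ structure one reads off minimal edges and blocking diagonals, then uses assumption \textbf{(A)} to pin down where $\beta(c_1)$ and $\beta(c_2)$ land on $x_2x_3$; a further application of the $(n-1)$-\ct\ to $p_1x_2q_1$ and $p_2x_3q_2$ produces an initial segment that must cross $p_2q_2$, contradicting Proposition~\ref{ForbStruct1}. Your strategy of manufacturing a third segment to feed into Corollary~\ref{triangleS3} is one branch of this argument, but the hard case is precisely when no such segment exists, and that case is handled by the $\tbty{B}{3}$ analysis rather than by tracking $\beta(a_1)$.
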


\begin{proof} By Lemma~\ref{triangularBlocksLemma}, the vertices $x_1,x_2,x_3$ are externally blocked.  Let $c_1, c_2, \dots,$ $c_r$ be the vertices that lie on $a_1a_2$, in that order from $a_1$ to $a_2$. Thus, $r \geq 1$. Let the initial segment $\not=a_1a_2$ through $c_i$ be $p_iq_i$. Note that at least one of $p_i,q_i$ must be on $x_2x_3$, otherwise we obtain a contradiction using Proposition~\ref{ForbStruct1}, without loss of generality, $q_i \in x_2x_3$. Also, if $p_i \in x_1x_3, p_{i+1} \in x_1x_2$, then by Corollary~\ref{triangleS3}, $\Delta$ has type $\trt$. Thus, assume that there is $i_0$ such that $p_i \in x_1x_2$ for $i \leq i_0$ and $p_i \in x_1x_3$ for $i > i_0$. Moreover, by Proposition~\ref{ForbStruct1}, on $a_1x_1$, the vertices $a_1, p_1, p_2, \dots, p_{i_0}, x_1$ appear in this order, and on $x_1a_2$, the vertices $x_1 p_{i_0+1} p_{i_0 + 2} \dots p_r a_2$ appear in this order. By Lemma~\ref{triangularBlocksLemma}, we also have $a_1, p_i$ and $c_i$ externally blocked in the region $a_1p_ic_i$, for $i \leq i_0$, and $a_2, p_i$ and $c_i$ externally blocked in the region $a_2p_ic_i$ for $i > i_0$. However, applying the $(n-1)$-\ct~to the \tba~$\Delta_a$ induced by $a_1a_2x_1$, the only type that can be satisfied by $\Delta_a$ is $\tbty{B}{3}$.\\
\indent From the definition of type $\tbty{B}{3}$, it follows that $p_1c_1, c_1c_2, c_2p_2$ are minimal, that there are vertices $r_1 \in x_1p_1, r_2 \in x_1p_2$ such that $r_1r_2$ is a minimal initial segment, and also that $r_1p_1, r_2p_2$ are minimal. Furthermore, $r_2c_1, r_1c_2, p_1p_2$ are all blocking segments.\\
\indent Observe that, by the assumption \textbf{(A)}, if $l(c_1r_2)$ crosses $x_1x_2$, then $l(p_2c_2)$ must pass through the same point, however, $l(p_2c_2)$ cuts $x_2x_3$, which is a contradiction. Hence, $\beta(c_1)$ crosses the segment $x_2q_1$, and similarly, $\beta(c_2)$ cuts $q_2x_3$. Therefore, $(n-1)$-\ct~applied to the \tbas~$\Delta_1, \Delta_2$ induced respectively by regions $p_1x_2q_1$ and $p_2x_3q_2$, both have type $\trt$ or $\tbty{B}{3}$. In particular, looking at $\Delta_1$, there are vertices $u \in x_2q_1,v \in c_1q_1$, such that $uv$ is an initial segment. As $c_1c_2$ and $c_2a_2$ are minimal, $l(uv)$ is disjoint from $c_1a_2$, so $l(uv)$ must cross $a_2x_3$, at some point $w$. But, $uw$ and $p_2q_2$ must cross as well, and this is a final contradiction granted by Proposition~\ref{ForbStruct1}.\qed\end{proof} 


\begin{lemma}\label{S12basicintersectionLemma}Let $n \geq 1$ be an integer and suppose that $(n-1)$-\ct~holds. Let $\Delta = (T, \mathcal{S}, \mathcal{B})$ be a \tba~of size $n$, and let $x_1, x_2$ and $x_3$ be the vertices of the triangle $T$. Suppose that there are vertices $a_1, b_1 \in x_1x_2, a_2,b_2 \in x_2x_3$ such that $x_1, b_1, a_1, x_2$ and $x_3, a_2, b_2, x_2$ appear in these orders, and $a_1a_2, b_1b_2$ are initial segments, with an intersection point $c$. Then, in the region $a_1x_2b_2c$, the vertices $x_2$ and $c$ are internally blocked.\end{lemma}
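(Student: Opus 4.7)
My plan proceeds in four steps.

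\textbf{Step 1 (the vertex $x_2$).} I first show $x_2$ is internally blocked in $R = a_1x_2b_2c$. If $\beta(x_2)$ did not exist, then Proposition~\ref{ForbStruct1} applied with $x_2$ playing the role of $x_1$ would force the two initial segments $a_1a_2$ and $b_1b_2$ (each of which has one endpoint on $x_2x_1$ and one on $x_2x_3$) to be disjoint, contradicting $c = a_1a_2 \cap b_1b_2$. Hence $\beta(x_2)$ exists, and since the sector of $R$ at the corner $x_2$ equals the full interior angle of $T$ at $x_2$, the segment $\beta(x_2)$ enters the interior of $R$ from $x_2$.

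\textbf{Step 2 (parity reduction).} Suppose for contradiction that $c$ is externally blocked in $R$. Applying the restated Lemma~\ref{evenInternalBlocks} to the quadrilateral $R = a_1x_2b_2c$, the number of internally blocked vertices is even. Since $x_2$ is internally blocked and $c$ is not, exactly one of $a_1, b_2$ is internally blocked. Using the symmetry $a_1 \leftrightarrow b_2$, $b_1 \leftrightarrow a_2$, $x_1 \leftrightarrow x_3$ (which interchanges the two edges incident to $x_2$), I may assume WLOG that $a_1$ is internally and $b_2$ is externally blocked in $R$.

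\textbf{Step 3 (locating $\beta(c)$).} Since $c$ is externally blocked in $R$, the two opposite rays of $\beta(c)$ at $c$ cannot enter $R$'s sector (bounded by the rays $ca_1$ and $cb_2$), so they enter the two sectors adjacent to $R$, which are contained in the triangles $a_1b_1c$ and $b_2a_2c$. Hence $\beta(c)$ has endpoints $e \in a_1b_1 \subset x_1x_2$ and $f \in a_2b_2 \subset x_2x_3$. By the intersection condition at $e$, there is a proper initial segment $s_e \in \mathcal{S}$ through $e$ distinct from $x_1x_2$; its other endpoint is on $x_1x_3$ or on $x_2x_3$.

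\textbf{Step 4 (contradiction via a sub-\tba).} I consider the sub-\tba~$\Delta_{T_3}$ induced by $T_3 = x_2b_1b_2$ (edges $x_2b_1 \subset x_1x_2$, $b_1b_2 \in \mathcal{S}$, and $b_2x_2 \subset x_2x_3$); by the inductive hypothesis the $(n-1)$-\ct~applies to $\Delta_{T_3}$. The initial segment $a_1a_2$ restricts in $T_3$ to $a_1c$ (with $a_1 \in x_2b_1$ and $c \in b_1b_2$, both edges meeting at the corner $b_1$), and $\beta(c)$ restricts to $ec$. If $s_e$'s other endpoint lies on $x_1x_3$, then the endpoints of $s_e$ lie on opposite sides of $b_1b_2$, so $s_e$ crosses $b_1b_2$ at a point $g \in b_1c$, producing a second proper initial $eg$ in $T_3$ also emanating from $b_1$. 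The only classification type compatible with two such proper initial segments from a single corner is $\tbty{B}{1}$ with first vertex $b_1$ and $n = 2$; the definition of this type forces $a_1g \in \mathcal{B}_{T_3}$, i.e.\ $\beta(a_1)$ to pass through $g$. But $g$ lies in the triangle $a_1b_1c$, on the opposite side of $a_1c$ from $R$, so $\beta(a_1)$ does not enter $R$'s sector at $a_1$, contradicting the internal blocking of $a_1$. If $s_e$ instead has its other endpoint on $x_2x_3$, an analogous argument using the sub-\tba~$\Delta_{T_2}$ induced by $T_2 = x_2a_1a_2$ produces two proper initial segments $cb_2$ and $he'$ sharing the corner $a_2$ (with $h = s_e \cap a_1a_2$); the forced $\tbty{B}{1}$ structure then makes $\beta(b_2)$ pass through $h \in a_1c \subset \partial R$ and so enter the interior of $R$, contradicting the external blocking of $b_2$.

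\textbf{Main obstacle.} The hardest part is verifying that no classification type other than $\tbty{B}{1}$ with $n = 2$ accommodates the two proper initial segments in $\Delta_{T_3}$ (resp.\ $\Delta_{T_2}$), and carefully tracking the case on the location of $s_e$'s (and $s_f$'s) other endpoint; the rigidity of the $\tbty{B}{1}$ blocking pattern is what drives each contradiction.
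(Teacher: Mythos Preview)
Your Steps 1--3 are sound: $x_2$ is indeed internally blocked via Proposition~\ref{ForbStruct1}, the parity reduction is correct, and $\beta(c)$ does land at some $e$ in the open segment $a_1b_1$ (and at $f\in a_2b_2$).

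The argument collapses in Step 4. In the sub-\tba\ $\Delta_{T_3}$ induced by $T_3=x_2b_1b_2$ you have $x_2$ internally blocked (Step~1) and $b_2$ externally blocked (Step~2), so by parity the corner $b_1$ is \emph{internally} blocked in $T_3$. But in type $\tbty{B}{1}$ the first vertex $y_1$ carries no blocking segment and is therefore externally blocked; hence $\tbty{B}{1}$ with first vertex $b_1$ is impossible, and your deduction ``$\beta(a_1)=a_1g$'' never gets off the ground. With two internally blocked corners and one externally blocked corner, the classification actually forces $\Delta_{T_3}$ to be of type $\tbty{I}{1}$ or $\tbty{I}{2}$ (with the externally blocked $b_2$ playing the role of $y_1$), and those types happily accommodate several disjoint or crossing proper initial segments between $x_2b_1$ and $b_1b_2$; in particular nothing pins down $\beta(a_1)$ to pass through $g$. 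The same objection applies to your ``analogous'' treatment of $T_2=x_2a_1a_2$: even when $\tbty{B}{1}$ with first vertex $a_2$ is admissible (here $a_2$ \emph{is} externally blocked), you have given no reason why there are only the two initial segments $cb_2$ and $he'$, i.e.\ why $n=2$; with $n>2$ the pairing in the definition of $\tbty{B}{1}$ need not make $h$ the partner of $b_2$, so $\beta(b_2)=b_2h$ does not follow. You also do not cover the sub-case where the second endpoint of $s_e$ lies on $a_2x_3$ (then $s_e$ misses $a_1a_2$ entirely).

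For comparison, the paper's proof works with $\beta(b_1)$ rather than $\beta(c)$: once $b_1$ is seen to be internally blocked in $T_3$, the $(n-1)$-classification gives $\Delta_{T_3}$ type $\tbty{I}{1}$ or $\tbty{I}{2}$, which produces a concrete initial segment $pr$ through $q=\beta(b_1)\cap a_1c$ with $r\in cb_2$; extending $l(pr)$ to meet $b_2a_2$ at $s$ yields two crossing initial segments $cb_2$ and $qs$ in the triangle $x_2a_1a_2$ whose corner $a_2$ is externally blocked, and Proposition~\ref{ForbStruct1} then gives the contradiction. The key missing idea in your attempt is to exploit the $\tbty{I}{1}/\tbty{I}{2}$ structure (which genuinely arises here) to manufacture a new crossing pair of initial segments to feed back into Proposition~\ref{ForbStruct1}, rather than trying to force a rigid $\tbty{B}{1}$ pattern that the blocking data at the corners already excludes.
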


\begin{proof} By Proposition~\ref{ForbStruct1}, we have $x_2$ internally blocked. Thus, $\Delta$ cannot have type $\trt$, so by Proposition~\ref{finalS3prop} it follows that any two initial segments that intersect have to have their endpoints on the same edges of $T$.\\
\indent Suppose contrary, vertex $c$ is externally blocked in $x_2a_1cb_2$. Thus, exactly one of $a_1, b_2$ is internally blocked in this region, by symmetry, we may assume that $a_1$ is internally blocked. So $b_1$ is internally blocked in $a_1b_1c$. Let $q = \beta(b_1) \cap a_1c$. Applying the $(n-1)$-\ct~to the \tba~induced by $x_2b_1b_2$, implies that it has type $\tbty{I}{1}$ or $\tbty{I}{2}$. Thus, there are vertices $p \in a_1b_1,r \in cb_2$, such that $p,q,r$ are collinear, and $pq, qr$ are minimal initial segments. Let $s$ be the intersection $l(pr) \cap b_2a_2$. However, $cb_2, qs$ intersect, while $a_2$ is externally blocked in $a_1a_2x_2$, so application of Proposition~\ref{ForbStruct1} results in contradiction.\qed\end{proof}

\begin{lemma}Let $\Delta$ and vertices $x_1,x_2,x_3,a_1,a_2,b_1,b_2,c$ satisfy the assumptions of Lemma~\ref{S12basicintersectionLemma}. Then, in the regions $a_1b_1c$ and $a_2b_2c$, the vertices $a_1,a_2,b_1,b_2,c$ are externally blocked.\end{lemma}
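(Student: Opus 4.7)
First I would dispense with $c$ and reduce to a single parity case. By Lemma~\ref{S12basicintersectionLemma}, $\beta(c)$ enters the interior of $a_1x_2b_2c$; since $\beta(c)$ is a straight chord through the transverse intersection $c$ of $a_1a_2$ and $b_1b_2$, it passes through two opposite quadrants at $c$, namely $a_1x_2b_2c$ and the opposite pentagon $x_1b_1ca_2x_3$, hence avoids the interiors of $a_1b_1c$ and $a_2b_2c$, so $c$ is externally blocked in both. Applying the parity lemma to $a_1b_1c$ (with $c$ externally blocked) shows $a_1$ and $b_1$ share blocking status there; similarly $a_2, b_2$ in $a_2b_2c$. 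Parity in $a_1x_2b_2c$ (where $x_2, c$ are internally blocked by Lemma~\ref{S12basicintersectionLemma}) pairs $a_1$ with $b_2$. Since $\beta(a_1)$ enters $T$ on exactly one of the two sides of $a_1a_2$, the blocking status of $a_1$ in $a_1x_2b_2c$ is complementary to its status in $a_1b_1c$ (and similarly for $b_2$). Hence if $a_1$ is internally blocked in $a_1x_2b_2c$, all four of $a_1,b_1,a_2,b_2$ are externally blocked in their small triangles, as desired. The task therefore reduces to ruling out the \emph{bad case} where $a_1$ and $b_2$ are both externally blocked in $a_1x_2b_2c$.

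In the bad case, all of $a_1, b_1, a_2, b_2$ are internally blocked in their small triangles. I would apply the $(n-1)$-\ct~to the sub-\tba~$\Delta''$ induced by $T''= b_1x_2b_2$; its size is strictly less than $n$ since $a_1a_2$ has extent outside $T''$. In $\Delta''$, $b_1$ and $x_2$ are internally blocked (their blocking segments enter $a_1b_1c \subset T''$ and $a_1x_2b_2c \subset T''$ respectively), while $b_2$ is externally blocked, because $\beta(b_2)$ enters $a_2b_2c$, which lies on the opposite side of $b_1b_2$ from $T''$. Checking the corner blocking patterns across the seven types, only $\tbty{I}{1}$ and $\tbty{I}{2}$ admit exactly one externally blocked corner (their first vertex $y_1$), so $y_1 = b_2$; since the proper initial segment $a_1c$ of $\Delta''$ has its endpoints on the two edges of $T''$ meeting at $b_1$, we must have $y_2 = b_1, y_3 = x_2$, and $a_1c$ is an outer initial segment of the intersecting type. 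By the concurrency condition in the definition of $\tbty{I}{1}$ (or $\tbty{I}{2}$), $a_1c$ pairs with another outer initial segment $s$ passing through $e := \beta(b_1) \cap a_1c$; the restriction of $s$ to $T''$ has endpoints on $b_1x_2$ and on $b_1c \subset b_1b_2$. In $\Delta$, $s$ extends past this second endpoint across $b_1b_2$ into the pentagon, ending on $\partial T$, so its $\Delta$-endpoints lie on $x_1x_2$ and on either $x_1x_3$ or $a_2x_3 \subset x_2x_3$.

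The main obstacle is to convert this structural information into a contradiction. A symmetric application of the $(n-1)$-\ct~to the sub-\tba~$\Delta'''$ induced by $a_1x_2a_2$ yields an analogous partner $s'$ of $cb_2$, an initial segment in $\Delta$ with one endpoint on $a_2x_2 \subset x_2x_3$ and, after extending past the $T'''$-restriction across $a_1a_2$, the other on $x_1x_2$ or $x_1x_3$. A case analysis on the four possible $\partial T$-endpoint patterns for $(s, s')$, combined with the fact that $x_1$ and $x_3$ carry no blocking segment anywhere in $\Delta$ (being externally blocked), produces a pair of intersecting initial segments whose endpoints violate one of Propositions~\ref{ForbStruct1},~\ref{ForbStruct2}, or~\ref{ForbStruct3} applied at the externally blocked corner $x_1$ or $x_3$. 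Carrying out this case analysis cleanly — tracing each extension of $s$ and $s'$ and verifying that every resulting endpoint combination leads to a forbidden subconfiguration — is the delicate heart of the argument, and proceeds in the same spirit as the final contradiction in the proof of Lemma~\ref{S12basicintersectionLemma}.
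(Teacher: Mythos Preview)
Your reduction to the bad case is correct and matches the paper: you rightly observe that $\beta(c)$ occupies the two opposite quadrants at $c$, so $c$ is externally blocked in both small triangles, and the parity argument then pairs the four corner vertices as you describe. Your application of the $(n-1)$-classification theorem to $T'' = b_1b_2x_2$ and the identification of the partner segment through $e = \beta(b_1)\cap a_1c$ also agree with the paper's approach (the paper calls the partner's endpoints $p \in a_1x_2$ and $r \in b_1c$; your constraint that the $b_1b_2$-endpoint lies in $b_1c$ is justified by the bad-case assumption on $\beta(a_1)$, which forces $a_1$ to be an odd-indexed $u_{2i-1}$ in the $\tbty{I}{\bullet}$ notation).

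The gap is in the final contradiction. You assert that $x_1$ and $x_3$ carry no blocking segment, but nothing proved so far gives this: Lemma~\ref{S12basicintersectionLemma} only tells you about $x_2$ and $c$. Without that assertion your four-case analysis on the $\partial T$-endpoints of $(s,s')$ has no anchor, and Propositions~\ref{ForbStruct1}--\ref{ForbStruct3} cannot be invoked at $x_1$ or $x_3$ in the way you describe.

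The paper avoids this entirely by recalling, from the opening line of the proof of Lemma~\ref{S12basicintersectionLemma}, that $x_2$ being internally blocked forces $\Delta$ to be non-$\trt$, and hence by Proposition~\ref{finalS3prop} \emph{any two intersecting initial segments in $\Delta$ have their endpoints on the same pair of edges of $T$}. Since your partner segment $s$ (the paper's $pr$) crosses $b_1b_2$, its full extension in $\Delta$ must therefore have both endpoints on $x_1x_2 \cup x_2x_3$; as $p\in x_1x_2$, the far endpoint lies on $a_2x_3 \subset x_2x_3$, and your case ``$x_1x_3$'' never occurs. There is then no need for the symmetric $s'$ at all: writing $s$ for the endpoint on $a_2x_3$, the paper observes that in the sub-arrangement on $x_2ps$ the vertex $s$ is externally blocked (since $p$ is externally blocked in $pqa_1$ by the $\tbty{I}{\bullet}$ structure, hence $p$ and $x_2$ are both internally blocked in $x_2ps$), and the restrictions $qa_2$, $rb_2$ of $a_1a_2$, $b_1b_2$ now violate Proposition~\ref{ForbStruct1} at $s$. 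This replaces your projected four-case analysis with a single application of one proposition.
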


\begin{proof} As in the proof of Lemma~\ref{S12basicintersectionLemma}, intersecting initial segments must have endpoints on the same edges of $T$. Also, by that lemma, $x_2$ is internally blocked and $c$ is internally blocked in the region $a_1cb_2x_2$.\\
\indent Consider the vertex $b_1$. If we prove that $b_1$ is externally blocked in the $a_1b_1c$, then it follows that so is $a_1$, and looking at regions $a_1a_2x_2$ and $b_1b_2x_2$, the conclusion follows. Therefore, assume contrary, that $b_1$ is internally blocked in $a_1b_1c$.\\
\indent Set $q = \beta(b_1) \cap a_1c$. By $(n-1)$-\ct~applied to the \tba~$\Delta_b$ induced by $b_1b_2x_2$, the type of $\Delta_b$ is either $\tbty{I}{1}$ or $\tbty{I}{2}$, but in either case we have vertices $p \in a_1x_2, r \in b_1c$, such that $p,q,r$ are collinear, and $pq, qr$ are minimal initial segments. Looking at $l(pr)$ and $b_1b_2$, as these intersect, the line $l(pr)$ must cross $a_2x_3$, with intersection point $s$, say. Recalling that the type of $\Delta_b$ is either $\tbty{I}{1}$ or $\tbty{I}{2}$, we have that $p$ is externally blocked in $pqa_1$, hence in $x_2ps$, the vertices $x_2, p$ are internally blocked, and therefore $s$ is externally blocked. But $qa_2$ and $rb_2$ cross at $c$, which is a contradiction by Proposition~\ref{ForbStruct1} applied to $x_2ps$.\qed\end{proof}

\begin{lemma} Let $\Delta$ and vertices $x_1,x_2,x_3,a_1,a_2,b_1,b_2,c$ satisfy the assumptions of Lemma~\ref{S12basicintersectionLemma}. Then, $a_1c$ and $b_2c$ are minimal and $x_2c$ is blocking.\end{lemma}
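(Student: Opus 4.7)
The plan is to apply the $(n-1)$-\ct~to the sub-\tba~$\Delta_b$ induced by the triangle $T'' := b_1 b_2 x_2$, and read off the desired structure from the resulting type. By the two preceding lemmas, $\beta(b_1)$ lies in the region $a_1 b_1 c$ (outside $T''$) so $b_1$ is externally blocked in $\Delta_b$, while $\beta(b_2)$ and $\beta(x_2)$ both enter $a_1 x_2 b_2 c \subset T''$, so $b_2$ and $x_2$ are internally blocked in $\Delta_b$. Inspecting the seven types, only $\tbty{B}{1}$ (odd, with first vertex $b_1$), $\tbty{I}{1}$, and $\tbty{I}{2}$ (both with $y_1 = b_1$) are compatible with this blocking pattern.

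Next I would locate the initial segment $a_1 c \in \mathcal{S}_{\Delta_b}$ within each possible type. With $a_1 \in b_1 x_2$ and $c \in b_1 b_2$, it must be either one of the main chords $v_i u_i$ of a $\tbty{B}{1}$ pattern or, in $\tbty{I}{1}$ and $\tbty{I}{2}$, either a basic-sub-\tba~chord inside $T' = b_1 z_2 z_3$ or the separator $z_2 z_3$ itself. The forcing observation is that, by Lemma~\ref{S12basicintersectionLemma}, $\beta(c)$ enters the interior of $a_1 x_2 b_2 c$ and must coincide with $\beta(x_2)$; combined with the explicit description of the blocking through $c$ in each type, this forces $a_1 c$ to be the outermost chord $v_n u_n$ in $\tbty{B}{1}$, or the separator $z_2 z_3$ in $\tbty{I}{1}$, $\tbty{I}{2}$, so that $a_1 = v_n$ or $z_2$ and $c = u_n$ or $z_3$.

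With the position of $a_1 c$ pinned down, all three conclusions follow by direct inspection of the type: both $a_1 c$ and $c b_2$ have no interior vertices in $\Delta_b$, hence are minimal in $\Delta$, and $\beta(x_2) = y_2 u_n = x_2 c$ (in $\tbty{B}{1}$) or $\beta(x_2) = y_2 z_3 = x_2 c$ (in $\tbty{I}{1}$, $\tbty{I}{2}$), so $x_2 c$ is a blocking segment of $\Delta$.

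The main difficulty is the forcing step: one must rule out configurations in which $a_1 c$ corresponds to a non-outermost chord of the $\tbty{B}{1}$ pattern or to a basic-sub-\tba~chord in the $\tbty{I}{1}$, $\tbty{I}{2}$ cases. In any such configuration, additional initial segments of $\Delta$ would cross the region $a_1 x_2 b_2 c$, yielding a nested pair of intersecting initial segments from $x_1 x_2$ to $x_2 x_3$; applying Lemma~\ref{S12basicintersectionLemma} and the preceding lemma to this pair would ultimately contradict Proposition~\ref{ForbStruct1} or the assumption~\textbf{(A)} on a suitable minimal $\overline{\mathcal{S}}$-region.
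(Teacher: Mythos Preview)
Your opening move---apply the $(n-1)$-classification theorem to the sub-\tba~$\Delta_b$ induced by $b_1 b_2 x_2$ and narrow its type to $\tbty{B}{1}$, $\tbty{I}{1}$, or $\tbty{I}{2}$---is exactly what the paper does. (One remark: your sentence ``$\beta(b_1)$ lies in the region $a_1 b_1 c$ (outside $T''$)'' is garbled. The triangle $a_1 b_1 c$ is \emph{inside} $T''$, and the previous lemma says $b_1$ is externally blocked there, so $\beta(b_1)$ lies \emph{outside} $a_1 b_1 c$, hence outside $T''$. Your conclusion that $b_1$ is externally blocked in $\Delta_b$ is correct, but the justification as written is backwards.)

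Where you diverge is in the ``forcing step'': you try to argue that $a_1 c$ must be the \emph{outermost} chord ($v_n u_n$ or $z_2 z_3$). This is both unnecessary and not adequately justified. Your sketch---that a non-outermost position would produce a nested intersecting pair leading to a contradiction via Proposition~\ref{ForbStruct1} or assumption~\textbf{(A)}---is too vague to stand as a proof, and in fact it is not clear the claim is even true: nothing in the hypotheses prevents further initial segments of $\Delta$ from cutting across $a_1 x_2 b_2 c$, and chasing those through the earlier lemmas does not obviously terminate in a contradiction.

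The paper avoids this entirely with a much simpler observation. In each of the three admissible types for $\Delta_b$ (with $y_1 = b_1$), \emph{every} initial segment joining the two edges through $y_1$ is already minimal---the chords $v_i u_i$ in $\tbty{B}{1}$ are nested and disjoint, and in $\tbty{I}{1}$, $\tbty{I}{2}$ the segments in the $\tbty{B}{0}/\tbty{B}{1}$ sub-\tba~together with $z_2 z_3$ are likewise nested and not crossed by anything. So $a_1 c$ is minimal regardless of which chord it is. By symmetry (applying the same reasoning to the sub-\tba~induced by $a_1 a_2 x_2$), $b_2 c$ is also minimal. Finally, since $\beta(x_2)$ enters the region $a_1 x_2 b_2 c$ and both $a_1 c$, $c b_2$ are minimal, $\beta(x_2)$ can only meet a vertex at $c$, so $x_2 c$ is blocking. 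No pinning down of $a_1 c$ is needed.
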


\begin{proof} By previous Lemma, we have that $a_1,b_1, a_2, b_2, c$ are externally blocked in regions $a_1b_1c$ and $a_2b_2c$. Apply the $(n-1)$-\ct~to the \tba~$\Delta_b$ induced by $b_1b_2x_2$, thus $\Delta_b$ has type $\tbty{B}{1}$, $\tbty{I}{1}$ or $\tbty{I}{2}$. But, in either case, $a_1c$ is minimal. Similarly, $b_2c$ is minimal, and $\beta(x_2)$ must pass through $c$.\qed\end{proof}

\begin{lemma}\label{S12finalintersectionLemma} Let $\Delta$ and vertices $x_1,x_2,x_3,a_1,a_2,b_1,b_2,c$ satisfy the assumptions of Lemma~\ref{S12basicintersectionLemma}. Then, $a_2c$ and $b_1c$ are minimal. \end{lemma}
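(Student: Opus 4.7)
The plan is to argue by contradiction. Suppose $d$ is a vertex lying strictly between $a_2$ and $c$ on the segment $a_1a_2$; I will derive a contradiction by invoking the preceding three lemmas of this sequence on a newly chosen pair of intersecting initial segments.

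Since $d$ is a vertex, there is an initial segment $s \neq a_1a_2$ passing through $d$. As recorded at the start of the proof of Lemma~\ref{S12basicintersectionLemma}, in this setting any two intersecting initial segments must have endpoints on the same two edges of $T$, so I may write $s = pq$ with $p \in x_1x_2$ and $q \in x_2x_3$. Because $s$ and $a_1a_2$ cross at $d$, the four endpoints $p, a_1, q, a_2$ interleave on $\partial T$; designating the endpoint closer to $x_2$ on each of $x_1x_2$ and $x_2x_3$ as $a_1'$ and $b_2'$ respectively, and the farther ones as $b_1'$ and $a_2'$, one obtains $\{a_1'a_2', b_1'b_2'\} = \{s, a_1a_2\}$ and the hypotheses of Lemma~\ref{S12basicintersectionLemma} are met for this new pair with intersection $d$.

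Invoking the full chain (Lemma~\ref{S12basicintersectionLemma} and the two subsequent lemmas) for $\{s, a_1a_2\}$ then yields $\beta(x_2) = x_2 d$. However, the same chain applied to the original pair $\{a_1a_2, b_1b_2\}$ has already given $\beta(x_2) = x_2 c$. By uniqueness of the blocking segment at $x_2$, the segments $x_2 d$ and $x_2 c$ coincide, forcing $d = c$ and contradicting the strict placement of $d$ between $a_2$ and $c$. Hence $a_2c$ is minimal, and a symmetric argument—assuming a vertex strictly between $b_1$ and $c$ on $b_1b_2$, taking an initial segment through it, and applying the same chain with $b_1b_2$ in place of $a_1a_2$—shows $b_1c$ is also minimal. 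The only bookkeeping step that needs care is the relabeling: depending on the relative positions of $p$ and $a_1$ on $x_1x_2$ (and of $q$ and $a_2$ on $x_2x_3$), the segment $s$ plays either the role of $a_1a_2$ or of $b_1b_2$ in Lemma~\ref{S12basicintersectionLemma}; but the interleaving of the four endpoints guarantees a unique consistent labeling in every crossing configuration, so no deeper obstacle arises.
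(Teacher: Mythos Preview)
Your proof is correct and follows essentially the same strategy as the paper's: assume an extra vertex $d$ on the segment, take the second initial segment through $d$, note it must have endpoints on $x_1x_2$ and $x_2x_3$, and then apply the preceding lemma to this new crossing pair to reach a contradiction. The only real difference is which conclusion of the preceding lemma you invoke: the paper uses the minimality conclusion (obtaining that $b_2q$ is minimal while $c$ lies in its interior), whereas you use the blocking conclusion (obtaining $\beta(x_2)=x_2d$, which conflicts with $\beta(x_2)=x_2c$ since $x_2\notin l(a_1a_2)$). Your choice is arguably tidier, because the statement ``$x_2c'$ is blocking'' is symmetric in the labeling of the two segments, so the relabeling discussion you flag at the end is in fact unnecessary---either case leads directly to $\beta(x_2)=x_2d$.
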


\begin{proof} Suppose contrary, $b_1c$ is not minimal. Thus, there is a vertex $q \in b_1c$. Let $pr \not= b_1b_2$ be the initial segment through $q$, with $p, r \in \partial T$. As $a_1c$ is minimal, without loss of generality, $p \in a_1b_1$. As in the proof of Lemma~\ref{S12basicintersectionLemma}, since $pr$ and $b_1b_2$ intersect, $r \in x_2x_3$. However, we may apply the previous lemma to $b_1b_2$ and $pr$, to obtain that $b_2q$ is minimal, which is a contradiction as $c \in b_2q$.\qed\end{proof}

\begin{corollary}\label{segmentsCorollary} Let $n \geq 1$ be an integer and suppose that $(n-1)$-\ct~holds. Let $\Delta = (T, \mathcal{S}, \mathcal{B})$ be a \tba~of size $n$, and let $x_1, x_2$ and $x_3$ be the vertices of the triangle $T$. If $\Delta$ is not of type $\trt$, then for every $\{j_1, j_2, j_3\} = [3]$ there are vertices $p_1, p_2, \dots ,p_k \in x_{j_1} x_{j_2}$ and $q_1, q_2, \dots, q_k \in x_{j_1}x_{j_3}$ such that $x_{j_1} p_1, p_i p_{i+1}, x_{j_1}q_1,  q_i q_{i+1}$ are minimal, for all $i < k$, and one of the following alternatives holds.
\begin{enumerate}
\item Vertex $x_{j_1}$ is externally blocked. Each $p_iq_i$ is a minimal initial segment, and every initial segment with one vertex on $x_{j_1}x_{j_2}$ and the other vertex on $x_{j_1}x_{j_3}$ is among $p_iq_i$. Furthermore, $k$ is odd, and $p_{2i-1}q_{2i}, p_{2i}q_{2i-1}$ are blocking segments, for $i \leq \frac{k-1}{2}$, and $p_k, q_k$ are externally blocked in $x_{j_1} p_k q_k$.
\item Vertex $x_{j_1}$ is externally blocked. Each $p_iq_i$ is a minimal initial segment, and every initial segment with one vertex on $x_{j_1}x_{j_2}$ and the other vertex on $x_{j_1}x_{j_3}$ is among $p_iq_i$. Furthermore, $k$ is even, and $p_{2i-1}q_{2i}, p_{2i}q_{2i-1}$ are blocking segments, for $i \leq \frac{k}{2}$.
\item Vertex $x_{j_1}$ is internally blocked and $k$ is even. For all $i \leq k/2$, the segments $p_{2i-1}q_{2i}$ and $p_{2i}q_{2i-1}$ are initial segments and intersect at point $r_i$. Every initial segment with one vertex on $x_{j_1}x_{j_2}$ and the other vertex on $x_{j_1}x_{j_3}$ is among these. The initial segments $r_ip_{2i-1}, r_ip_{2i}, r_iq_{2i-1}r_iq_{2i}$ are minimal. The vertices $r_1, \dots, r_{k/2}$ all lie on $\beta(x_{j_1})$. Also, $p_1q_1$ is blocking, and $p_{2i}q_{2i+1}, p_{2i+1}q_{2i}$ are blocking for $i < k/2$, and $p_k$ is externally blocked in $x_{j_1}p_kq_{k-1}$, and $q_k$ is externally blocked in $x_{j_1}q_kp_{k-1}$.
\end{enumerate}
\end{corollary}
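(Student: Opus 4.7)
The plan is to perform case analysis on whether $x_{j_1}$ is externally or internally blocked, using Proposition~\ref{finalS3prop} as the key reduction. Since $\Delta$ is not of type $\trt$, Proposition~\ref{finalS3prop} ensures that no initial segment with endpoints on $x_{j_1}x_{j_2}$ and $x_{j_1}x_{j_3}$ (I shall call such a \emph{corner-$x_{j_1}$ segment}) meets any initial segment having an endpoint on $x_{j_2}x_{j_3}$. Hence corner-$x_{j_1}$ segments interact only with each other, and any vertex on $x_{j_1}x_{j_2}$ closer to $x_{j_1}$ than the outermost corner-$x_{j_1}$ endpoint is itself an endpoint of a corner-$x_{j_1}$ segment; this drives every minimality and non-crossing argument that follows.

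\textbf{Case 1: $x_{j_1}$ externally blocked.} Proposition~\ref{ForbStruct1} forces corner-$x_{j_1}$ segments to be pairwise disjoint, so I list them as $p_1q_1, \dots, p_kq_k$ in order outward from $x_{j_1}$. Minimality of $x_{j_1}p_1$, $p_ip_{i+1}$, $q_iq_{i+1}$ and $p_iq_i$ then follows by inspecting a hypothetical extra vertex: the other initial segment through it can neither be another corner-$x_{j_1}$ segment (it would appear in the list) nor cross such a segment (by the reduction). To identify the blocking segments I set $R_0 = x_{j_1}p_1q_1$ and $R_i = p_ip_{i+1}q_{i+1}q_i$ for $1 \le i \le k-1$; each is a minimal $\overline{\mathcal{S}}$-region, so Lemma~\ref{evenInternalBlocks} forces an even number of internally blocked vertices. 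Using that $\beta(p_i)$ cannot exit a minimal region through a minimal boundary sub-segment nor meet an edge-vertex outside the known list, I would induct on $i$ to show that $R_0, R_2, R_4, \dots$ have all vertices externally blocked, whereas $R_1, R_3, \dots$ have all vertices internally blocked via the diagonal pair $\beta(p_{2i-1}) = p_{2i-1}q_{2i}$ and $\beta(q_{2i-1}) = q_{2i-1}p_{2i}$. Alternatives~(1) and~(2) of the corollary then correspond to whether $k$ is odd (with $p_k, q_k$ externally blocked in $x_{j_1}p_kq_k$) or even (all pairs closed).

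\textbf{Case 2: $x_{j_1}$ internally blocked.} Here $\beta(x_{j_1})$ runs from $x_{j_1}$ to a point on $x_{j_2}x_{j_3}$, and any corner-$x_{j_1}$ segment must cross it exactly once. By condition~(ii) of the \tba, each such crossing $r$ carries a second initial segment which, by the reduction, is itself corner-$x_{j_1}$; hence corner-$x_{j_1}$ segments pair up at crossings $r_1, r_2, \dots$ along $\beta(x_{j_1})$. Lemmas~\ref{S12basicintersectionLemma}--\ref{S12finalintersectionLemma} applied at each $r_i$ yield the minimality of the four incident sub-segments and place $r_i$ on $\beta(x_{j_1})$. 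I would order pairs by distance from $x_{j_1}$ along $\beta(x_{j_1})$ to label the endpoints as $p_1, \dots, p_k$ on $x_{j_1}x_{j_2}$ and $q_1, \dots, q_k$ on $x_{j_1}x_{j_3}$; the non-nesting of the pairs (so the labelling works) follows from Proposition~\ref{ForbStruct1} applied to the two sub-\tbas~separated by $\beta(x_{j_1})$. The blocking segments $p_1q_1$ and $p_{2i}q_{2i+1}, p_{2i+1}q_{2i}$ are then recovered by parity analysis on the minimal $\overline{\mathcal{S}}$-regions that arise --- a quadrilateral $x_{j_1}p_1r_1q_1$ followed by hexagons $r_ip_{2i}p_{2i+1}r_{i+1}q_{2i+1}q_{2i}$ --- each of which must have all vertices internally blocked by the available diagonals together with $\beta(x_{j_1})$. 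The external blocking of $p_k, q_k$ in their outer triangles follows from $\beta(p_k), \beta(q_k)$ necessarily exiting via $x_{j_2}x_{j_3}$ rather than returning inward. The main obstacle is this inductive parity analysis in Case~2: one must verify layer by layer that the diagonals close up as stated and that no stray initial segment intrudes, and here assumption~\textbf{(A)} is crucial, since whenever a potential diagonal blocking segment arises in a minimal region, (A) forces it to be concurrent with specific initial segments at $x_{j_1}$, pinning down the structure.
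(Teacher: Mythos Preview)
Your approach is essentially the same as the paper's, with only a cosmetic difference in the case split: you split on whether $x_{j_1}$ is externally or internally blocked, while the paper splits on whether any two corner-$x_{j_1}$ segments intersect. These are equivalent via Proposition~\ref{ForbStruct1} (externally blocked $\Rightarrow$ disjoint) and the observation that $\beta(x_{j_1})$ forces pairing (internally blocked with corner segments present $\Rightarrow$ intersecting pairs exist). Within each case the arguments match: Proposition~\ref{finalS3prop} for the reduction, minimality plus parity for the non-intersecting case, Lemmas~\ref{S12basicintersectionLemma}--\ref{S12finalintersectionLemma} plus assumption~\textbf{(A)} for the intersecting case.

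One small slip: in your Case~2 you invoke Proposition~\ref{ForbStruct1} ``applied to the two sub-\tbas\ separated by $\beta(x_{j_1})$'' to establish that the crossing pairs are non-nesting. This does not parse as written, since sub-\tbas\ are induced by triangles whose edges lie in $\overline{\mathcal{S}}$, and $\beta(x_{j_1})$ is a blocking segment, so it does not bound a sub-\tba\ to which Proposition~\ref{ForbStruct1} could be applied. The paper obtains non-nesting differently and more directly: once Lemma~\ref{S12finalintersectionLemma} gives that for a single crossing pair $pq, p'q'$ meeting at $r$ the six segments $pr, p'r, qr, q'r, pp', qq'$ are minimal, any further corner segment with an endpoint on $x_{j_1}p$ is forced to have its other endpoint on $x_{j_1}q$ (otherwise it would cross one of these minimal pieces), and iterating this yields the layered labelling. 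This is an easy fix, and the rest of your proposal is sound.
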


\begin{proof} Without loss of generality, $j_1 = 1, j_2 = 2, j_3 = 3$. We split into two cases, depending on whether some initial segments between $x_1x_2$ and $x_1x_3$ intersect or not. The possible outcomes are shown in Figure~\ref{cor19Fig}.\\[6pt]
\begin{figure}
\begin{center}
\includegraphics[width = 0.8\textwidth]{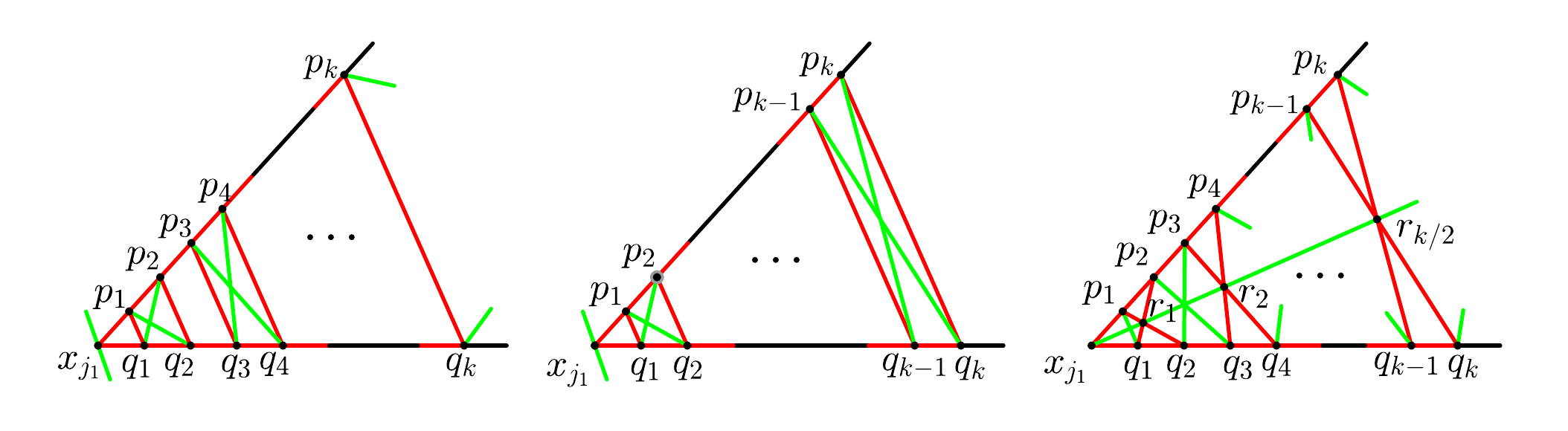}
\caption{Possibilities in Corollary~\ref{segmentsCorollary}}
\label{cor19Fig}
\end{center}
\end{figure}
\noindent\textbf{Case 1: there is an intersecting pair.} By Lemma~\ref{S12finalintersectionLemma}, $x_1$ is internally blocked. Let $pq$ be any initial segment with $p \in x_1x_2, q \in x_1x_3$. Then $\beta(x_1)$ crosses $pq$, at a point $r$, say, and let $p'q'$ be another initial segment through $r$, with $p',q' \in \partial T$. However, $\Delta$ is not of type $\trt$, and by Proposition~\ref{finalS3prop}, without loss of generality, $p' \in x_1x_2, q' \in x_1x_3$. Applying Lemma~\ref{S12finalintersectionLemma} to $pq$ and $p'q'$, it follows that $pr, p'r, qr, q'r, pp', qq'$ are minimal. Observe further that if an initial segment $s$ has an endpoint on $x_1p$, unless the second endpoint is on $x_1q$, $s$ crosses $pq$, and thus $s = p'q'$. Combining these observations, we conclude that there are points $p_1, p_2, \dots, p_k \in x_1x_2, q_1, q_2, \dots, q_k \in x_1x_3$, such that $x_1p_1, p_1p_2, \dots, p_{k-1}p_k, x_1q_1, q_1q_2, \dots, q_{k-1}q_k$ are minimal, $k$ is even, $p_{2i-1}q_{2i}, p_{2i}q_{2i-1}$ are initial segments, and every initial segment with a vertex on $x_1x_2$ and another vertex on $x_1x_3$ is one of $p_{2i-1}q_{2i}, p_{2i}q_{2i-1}$. Furthermore, $p_{2i-1}q_{2i}, p_{2i}q_{2i-1}$ intersect at a point $r_i$, and $r_ip_{2i-1}, r_ip_{2i}, r_iq_{2i-1}, r_iq_{2i}$ are minimal for all $i \leq k/2$.\\
\indent From the information about minimal segments, we are forced to have $r_1, r_2, \dots,$ $r_{k/2} \in \beta(x_1)$, $p_1q_1$ blocking, in the minimal $\overline{\mathcal{S}}$-region $p_{2i}r_iq_{2i} q_{2i+1}$ $r_{i+1} p_{2i+1}$, all six vertices are internally blocked, for every $1 \leq i < k/2$, and finally $p_k$ is externally blocked in $x_1 p_k q_{k-1}$ and $q_k$ is externally blocked in $x_1 q_k p_{k-1}$. It remains to prove that for $i < k/2$, $p_{2i}q_{2i+1}$ and $p_{2i+1}q_{2i}$ are blocking.\\
Suppose contrary, there is some $i < k/2$, such that, without loss of generality, $p_{2i}q_{2i+1}$ is not blocking. Looking at minimal $\overline{\mathcal{S}}$-region $p_{2i}r_iq_{2i}q_{2i+1} r_{i+1}p_{2i+1}$, it follows that $p_{2i} q_{2i}$ is blocking. However, looking at $\beta(p_{2i})$ and $l(r_i q_{2i})$, which meet at $q_{2i}$, by assumption \textbf{(A)}, it follows that $l(p_{2i+1} r_{i+1})$ also contains $q_{2i}$, which is a contradiction.\\[6pt]
\noindent\textbf{Case 2: there are no intersecting pairs.} Observe that if $p \in x_1x_2, q \in x_1x_3$ and $pq$ is an initial segment, then, if any other segment $s$ crosses $pq$, then, by the assumption of this case, $s$ must have at least one vertex on $x_2x_3$, which is impossible by Proposition~\ref{finalS3prop}. Thus, $pq$ is minimal, and as in the previous case, if a segment $s$ has an endpoint in $x_1p$, its other endpoint is bound to be in $x_1q$. From this, we conclude that there are points $p_1, p_2, \dots,$ $p_k \in x_1x_2,$ $q_1, q_2, \dots,$ $q_k \in x_1x_3$, such that $x_1p_1, p_1p_2, \dots,$ $p_{k-1}p_k, x_1q_1,$ $q_1q_2, \dots,$ $q_{k-1}q_k$ are minimal, and $p_iq_i$ is a minimal initial segment for every $i \leq k$, and if $s$ is an initial segment with endpoints on $x_1x_2$ and $x_1x_3$, then $s = p_iq_i$ for some $i$. As $x_1p_1, x_1q_1$ are minimal, $p_1, q_1$ are externally blocked in $x_1p_1q_1$. Thus, $p_1q_2$ and $p_2q_1$ are blocking segments. Hence, in the region $p_2q_2q_3p_3$, $p_2$ and $q_2$ externally blocked and so are $p_3, q_3$. Proceeding in this fashion, the conclusion of the corollary follows.\qed\end{proof}

\begin{proposition}\label{FinalInternalBlocks}Let $n \geq 1$ be an integer and suppose that $(n-1)$-\ct~holds. Let $\Delta = (T, \mathcal{S}, \mathcal{B})$ be a \tba~of size $n$, and let $x_1, x_2$ and $x_3$ be the vertices of the triangle $T$. Suppose that $x_1, x_2$ are internally blocked. Then, $\Delta$ has one the types $\tbty{B}{1},\tbty{I}{1}$ or $\tbty{I}{2}$.\end{proposition}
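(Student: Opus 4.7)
The plan is to apply Corollary~\ref{segmentsCorollary} at each vertex of $T$ and combine the resulting local structures to pin down the type of $\Delta$. Since $x_1, x_2$ are internally blocked, $\Delta$ cannot be of type $\trt$ (all three of whose vertices are externally blocked), so Corollary~\ref{segmentsCorollary} applies at $x_1$ and $x_2$ and falls into Case 3 at each. Write $p_1, \dots, p_K \in x_1 x_2$ and $q_1, \dots, q_K \in x_1 x_3$ for the resulting family at $x_1$ (with $K$ even, possibly zero), and $p'_1, \dots, p'_{K'} \in x_1 x_2$, $q'_1, \dots, q'_{K'} \in x_2 x_3$ for the family at $x_2$. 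These describe all initial segments between the pairs of edges adjacent to $x_1$ and $x_2$, and locate $\beta(x_1), \beta(x_2)$.

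Next, I would show that $x_3$ is externally blocked. Suppose for contradiction that $x_3$ is internally blocked; applying Corollary~\ref{segmentsCorollary} at $x_3$ also gives Case 3, with parameters $K'' \geq 0$ and vertices $\tilde p_j \in x_1 x_3$, $\tilde q_j \in x_2 x_3$. The blocking segment $\beta(x_1)$ must leave $x_1$, pass through the intersection points $r_i$ of $x_1$'s structure, and end at a vertex on the opposite edge $x_2 x_3$; similarly for $\beta(x_2)$ and $\beta(x_3)$. By Propositions~\ref{ForbStruct2} and~\ref{ForbStruct3}, initial segments from any two of the three Corollary families are pairwise disjoint, which severely restricts where the endpoints of the three blocking segments can lie. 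Combining this with condition \textbf{(A)} applied to a suitable minimal $\overline{\mathcal{S}}$-region traversed by these blocking segments yields the contradiction. I expect this to be the main obstacle of the proof, since several sub-configurations of the three blocking segments must be handled.

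Once $x_3$ is externally blocked, Corollary~\ref{segmentsCorollary} at $x_3$ gives Case 1 or Case 2, producing a non-intersecting family of initial segments between $x_1 x_3$ and $x_2 x_3$. I would then split into three sub-cases based on the pair $(K, K')$. If $K = K' = 0$, all non-edge initial segments lie between $x_1 x_3$ and $x_2 x_3$; the internal blocking at $x_1, x_2$ rules out Case 2 at $x_3$ and forces Case 1 (odd), which is exactly type $\tbty{B}{1}$ with $y_1 = x_3$. If exactly one of $K, K'$ is positive, say $K > 0, K' = 0$ (the other situation being symmetric), the intersecting structure at $x_1$ plays the role of the family at $y_2$ in type $\tbty{I}{1}$; I identify the separator segment $z_2 z_3$ (with $z_2$ the next vertex past $q_K$ on $x_1 x_3$) and apply the $(n-1)$-\ct~to the sub-\tba~induced by $x_3 z_2 z_3$ to obtain its type $\tbty{B}{0}$ or $\tbty{B}{1}$ (even), confirming type $\tbty{I}{1}$ for $\Delta$. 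If both $K, K' > 0$, an analogous argument with both intersecting structures present simultaneously gives type $\tbty{I}{2}$. In each sub-case, the remaining concurrency conditions are verified using condition \textbf{(A)} together with the Corollary outputs.
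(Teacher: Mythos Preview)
Your overall strategy---apply Corollary~\ref{segmentsCorollary} at each vertex and split into cases according to whether $x_1$ and $x_2$ ``have segments'' (your $K,K'$)---is exactly the paper's approach. But you have inverted where the work lies.

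The separate step ``prove $x_3$ is externally blocked'' is not needed, and your sketch for it is the vaguest part of your proposal. The paper never isolates this. In each case, once Corollary~\ref{segmentsCorollary} at $x_1$ and $x_2$ has described all initial segments touching $x_1x_2$, one sees directly that any initial segment between $x_1x_3$ and $x_2x_3$ is minimal (it cannot be crossed by anything else without contradicting Proposition~\ref{finalS3prop}). Since there is at least one such segment in each case (the paper finds it explicitly), alternative~3 of Corollary~\ref{segmentsCorollary} at $x_3$ is excluded automatically, and $x_3$ is externally blocked as a by-product.

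Conversely, the three sub-cases you treat as routine are where the actual argument happens, and your sketches skip the substantive steps. In the $K>0,\,K'=0$ case you propose to ``identify the separator segment $z_2z_3$'' and apply the $(n-1)$-\ct\ to a sub-triangle; but you have not said where $z_3$ comes from or why $z_2z_3$ is an initial segment. The paper does not use the inductive hypothesis here at all: it locates $u=\beta(x_1)\cap x_2x_3$, shows $x_2u$ is minimal, finds $v\in x_1x_3$ with $uv$ initial and minimal, exhibits the hexagonal minimal region $a_kc_{k/2}b_kx_2uv$, and then uses assumption~\textbf{(A)} to force $x_2a_k$ and $vb_k$ to be blocking---this is what certifies type~$\tbty{I}{1}$. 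In the $K,K'>0$ case the paper first rules out $a_kx_3$ being minimal (otherwise an impossible heptagonal minimal region arises), then builds the octagonal region $a_kc_{k/2}b_kd_lf_{l/2}e_luv$ and again uses~\textbf{(A)} to pin $\beta(x_1)$ through $e_l$ and $\beta(x_2)$ through $a_k$. These concrete constructions are the content of the proof; ``an analogous argument'' does not cover them.
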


\begin{proof} We consider three cases, depending on the outcomes of Corollary~\ref{segmentsCorollary}. We say that $x_i$ \emph{has no segments} if there are no segments with one vertex on $x_ix_{i'}$ and the other on $x_ix_{i''}$, where $i \in \{1,2\}$ and $\{i, i', i''\} = \{1,2,3\}$. Otherwise, we say that $x_i$ \emph{has segments}.\\[6pt]
\noindent\textbf{Case 1: both $x_1, x_2$ have no segments.} Let $u = \beta(x_1) \cap x_2x_3, v = \beta(x_2) \cap x_1x_3$. By Corollary~\ref{segmentsCorollary}, any initial segment in $\mathcal{S}$ is of the form $pq$, where $p \in x_1x_3, q \in x_2x_3$ and all these are minimal (and hence disjoint). In particular, no initial segment can cross $x_1u, x_2v$, and also, $x_1v, x_2u$ are minimal segments. Thus, the other initial segment through $u$, must cross $x_3v$, and the other initial segment through $v$ must cross $x_3u$. However, all initial segments are disjoint, so actually $uv$ is an initial segment, and it is minimal. It follows from Corollary~\ref{segmentsCorollary} at vertex $x_3$ that the type of $\Delta$ is $\tbty{B}{1}$.\\[6pt]
\noindent\textbf{Case 2: $x_1$ has, but $x_2$ has no segments.} By Corollary~\ref{segmentsCorollary}, we have vertices $a_1, a_2, \dots,$ $a_k \in x_1x_3,$ $b_1, b_2, \dots,$ $b_k \in x_1x_2$ such that $k$ is even and $x_1a_1, a_1a_2, \dots,$ $a_{k-1}a_k, x_1b_1, b_1b_2, \dots,$ $b_{k-1}b_k, b_kx_2$ are minimal, $a_{2i-1}b_{2i}, a_{2i} b_{2i-1}$ are initial segments that intersect at a point $c_i$. Let $u = \beta(x_1) \cap x_2x_3$. By Corollary~\ref{segmentsCorollary} applied to vertex $x_3$, we see that every initial segment is either among $a_ib_j$, or has vertices on $x_1x_3$ and $x_2x_3$ and is minimal. Hence, $x_2u$ is minimal. Let $v \in x_1x_3$ be such that $uv$ is an initial segment, and thus minimal. Hence, $va_k$ is also minimal, as otherwise, an initial segment with a vertex on $va_k$ would have the second endpoint on $x_2x_3$, so it would have to be minimal, but would cross $\beta(x_1) = x_1u$ or $vu$, which is impossible. Finally, $a_kc_{k/2}b_kx_2uv$ is a minimal $\overline{\mathcal{S}}$-region. Using assumption \textbf{(A)} as before, we see that $x_2a_k, vb_k$ are blocking, and it follows that the type of $\Delta$ is $\tbty{I}{1}$.\\[6pt]
\noindent\textbf{Case 3: both $x_1, x_2$ have segments.} By Corollary~\ref{segmentsCorollary}, we have vertices $a_1, a_2, \dots,$ $a_{k} \in x_1x_3$, $b_1, b_2, \dots,$ $b_{k}, d_1, d_2, \dots,$ $d_l \in x_1x_2$, $e_1, e_2, \dots,$ $e_l \in x_2x_3$ such that $x_1a_1, a_1a_2, \dots,$ $a_{k-1}a_k$, $x_1b_1, b_1b_2, \dots,$ $b_{k-1}b_k$, $x_2d_1, d_1d_2, \dots,$ $d_{l-1}d_l$, $x_2e_1, e_1e_2, \dots,$ $e_{l-1}e_l$ are minimal, and $a_1b_2, a_2b_1, \dots, a_{k-1}b_k, a_kb_{k-1}$, $d_1e_2, d_2e_1, \dots,$ $d_{l-1}e_l, d_l e_{l-1}$ are initials segments, $k$, $l$ are even and these are all initial segments that have at least one vertex on $x_1x_2$. Furthermore, $a_{2i-1}b_{2i}, a_{2i}b_{2i-1}$ intersect at point $c_i$, $d_{2i-1}e_{2i}, d_{2i}e_{2i-1}$ intersect at a point $f_i$, such that $c_1, c_2, \dots,$ $c_{k/2} \in \beta(x_1),$ $f_1, f_2, \dots,$ $f_{l/2} \in \beta(x_2)$. We also have that $b_kd_l$ is minimal.\\
\indent Suppose for a moment that $a_kx_3$ is minimal. Then, as $e_lf_{l/2},$ $f_{l/2}d_l,$ $d_lb_k,$ $b_k c_{k/2},$ $c_{k/2}a_k$ are minimal, it follows that $e_l x_3$ is also minimal, and hence $a_kc_{k/2}b_kd_l f_{l/2}e_lx_3$ is a minimal $\mathcal{L}$-region. But inside this minimal region, $\beta(f_{l/2})$ can only pass through $a_k$, and $\beta(c_{k/2})$ can only pass through $e_l$. However, then we have $\beta(b_k) = \beta(d_l)$, which is a contradiction. Therefore, $a_kx_3$ is not minimal.\\   
\indent Let $v$ be the vertex in $a_kx_3$ such that $a_kv$ is minimal. As in the previous case, any initial segment with vertices on $x_1x_3$ and $x_2x_3$ is minimal. It follows that the initial segment through $v$, not equal to $x_1x_3$, is $vu$ with $u \in x_3e_l$ and it is minimal. Since $uv, va_k, a_kc_{k/2}, c_{k/2}b_k, b_kd_l, d_lf_{l/2}, f_{l/2}e_l$ are minimal, it follows that $ue_l$ is minimal as well. Therefore, $R = a_kc_{k/2}b_kd_lf_{l/2}e_luv$ is a minimal $\overline{\mathcal{S}}$-region. Using Corollary~\ref{segmentsCorollary}, to prove that $\Delta$ has type $\tbty{I}{2}$, it suffices to show that $a_k \in \beta(x_2), e_l \in \beta(x_1)$.\\
\indent Suppose contrary, that $\beta(x_1) \cap x_2x_3 \not= e_l$. By minimality of $R$, we must have $\beta(x_1) \cap x_2x_3 = u$. But, we would then have $\beta(u) \cap l(va_k) = x_1$. By the assumption \textbf{(A)}, it follows that $l(e_lf_{l/2})$ also passes through $x_1$, which is impossible. Thus $e_l \in \beta(x_1)$, and similarly we obtain $a_k \in \beta(x_2)$. Thus, $\Delta$ has type $\tbty{I}{2}$.\qed\end{proof}

Finally, it remains to classify the \tbas~without intersecting initial segments.

\begin{proposition} \label{DisjointSegmentsProp} Let $n \geq 1$ be an integer and suppose that $(n-1)$-\ct~holds. Let $\Delta = (T, \mathcal{S}, \mathcal{B})$ be a \tba~of size $n$, and let $x_1, x_2$ and $x_3$ be the vertices of the triangle $T$. Suppose that no two initial segments in $\mathcal{S}$ intersect. Then $\Delta$ has a basic type or $\trt$ (with $k=1$ in the definition of $\trt$). \end{proposition}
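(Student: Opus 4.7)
The plan is a case analysis driven by Corollary~\ref{segmentsCorollary} applied at each corner of $T$. Since no two initial segments in $\mathcal{S}$ cross by hypothesis, alternative~(3) of the corollary (which produces the intersecting initial segments $p_{2i-1}q_{2i}$ and $p_{2i}q_{2i-1}$) is automatically excluded at every corner, and $\Delta$ cannot have type $\trt$ with $k\ge 2$, since that type also demands interior intersections of initial segments. Hence it suffices to prove that $\Delta$ is one of the four basic types $\tbty{B}{0},\tbty{B}{1},\tbty{B}{2},\tbty{B}{3}$. For each corner $x_i$ of $T$, let us say that $x_i$ \emph{has a chain} if there is some initial segment in $\mathcal{S}$ with both endpoints on the two edges of $T$ incident to $x_i$; if so, Corollary~\ref{segmentsCorollary} applied at $x_i$ returns alternative~(1) or~(2), organizing all such segments into an ordered chain.

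I would then split according to how many corners have chains. If none of $x_1,x_2,x_3$ has a chain, then $\mathcal{S}=\emptyset$, and a short argument using condition~(ii) of the \tba~definition (any interior endpoint of a putative blocking segment would require a second initial segment through it, of which none exists) forces $\mathcal{B}=\emptyset$, giving type $\tbty{B}{0}$. If exactly one corner, say $x_1$, has a chain, then the data returned by the corollary is by inspection precisely type $\tbty{B}{1}$ with first vertex $y_1=x_1$, the parity of the chain matching the even or odd case of the definition. If exactly two corners, say $x_1$ and $x_2$, have chains, then both chains place endpoints on the shared edge $x_1x_2$; disjointness of initial segments forces these two groups of endpoints to be segregated on that edge, with the $x_1$-endpoints strictly closer to $x_1$ than the $x_2$-endpoints (otherwise two initial segments from different chains would have to cross in the interior of $T$). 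After relabelling $y_1=x_1, y_3=x_2, y_2=x_3$, this matches type $\tbty{B}{2}$. If all three corners have chains, the same pairwise segregation on each edge produces the scaffold of type $\tbty{B}{3}$.

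The main obstacle is the parity bookkeeping in the last two cases: $\tbty{B}{2}$ requires both chains to have even length (i.e.\ alternative~(2) of the corollary), and $\tbty{B}{3}$ requires all three chains to have the same parity. I would establish these by tracing the outermost blocking segment $\beta(p_k)$ or $\beta(q_k)$ of an alternative-(1) chain at one corner: such a blocking segment must leave the inner triangle at $p_k$ or $q_k$, and then by following where it can terminate on the opposite side of $T$, one sees it is forced to cross an initial segment of a neighbouring chain at an interior point, contradicting condition~(ii) (which would demand a second initial segment through the crossing, of which there is none). This is essentially the same flavour of argument used repeatedly in Propositions~\ref{ForbStruct1}--\ref{ForbStruct3} and in Proposition~\ref{FinalInternalBlocks}, often invoking the assumption~\textbf{(A)} applied to an appropriate minimal $\overline{\mathcal{S}}$-region between the two chains. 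Once the parities are fixed and the minimal filler segments on each shared edge are identified by the same disjointness reasoning, the configuration matches exactly the definition of the corresponding basic type.
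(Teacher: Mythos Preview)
Your case structure (split by how many corners carry a chain, invoke Corollary~\ref{segmentsCorollary} at each, read off the relevant basic type) is exactly what the paper does. Cases with zero or one chain are fine as you describe them.

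The gap is in your ``parity bookkeeping'' paragraph. Your proposed mechanism --- that $\beta(p_k)$ is forced to cross an initial segment of the neighbouring chain at an \emph{interior} point, violating condition~(ii) --- does not work. In the two-chain case the paper looks at the minimal $\overline{\mathcal{S}}$-region $R=x_3abcd$ left over between the chains; if the $x_1$-chain is odd then $a,b$ are internally blocked in $R$, and $\beta(b)$ exits $R$ through the \emph{vertex} $d$ of the other chain (not through an interior point), so condition~(ii) is satisfied. The contradiction comes only from assumption~\textbf{(A)}: since $l(cd)\cap\beta(b)=d$, \textbf{(A)} forces $l(ax_3)$ through $d$, which is absurd. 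So the even--even requirement of $\tbty{B}{2}$ is an \textbf{(A)}-consequence, not an (ii)-consequence.

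More seriously, in the three-chain case your framing as a contradiction argument is wrong: odd chains are \emph{not} excluded. The paper works in the central hexagonal minimal region $R=abcdef$. If, say, $a,b$ are internally blocked, one uses \textbf{(A)} to rule out each small diagonal ($ac$, $ae$, etc.) as a blocking segment, leaving only the main diagonals $ad,be,cf$. This forces all six vertices to be internally blocked, hence all three chains odd, and the resulting configuration is precisely the odd case of $\tbty{B}{3}$ (with the extra blocking segments $v_kw'_m$, $v'_lu_m$, $w_lu'_k$ from the definition). So here the task is not to derive a contradiction but to pin down the blocking pattern inside the hexagon; your sketch does not address this and, taken literally, would wrongly exclude a valid type.
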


\begin{proof} We say that a vertex $x_i$ is empty, if for $\{i,j,j'\} = \{1,2,3\}$, there are no initial segments between $x_ix_j$ and $x_ix_{j'}$. We distinguish between four cases, depending on the number of empty vertices.\\[3pt]
\noindent\textbf{Case 0: All four vertices are empty.} Then $\mathcal{L} = \emptyset$, and $\Delta$ has type $\tbty{B}{0}$.\\[3pt]
\noindent\textbf{Case 1: Only $x_1$ is non-empty.} Applying Corollary~\ref{segmentsCorollary}, $\Delta$ has type $\tbty{B}{1}$.\\[3pt]
\noindent\textbf{Case 2: Vertices $x_1, x_2$ are non-empty.} By Corollary~\ref{segmentsCorollary}, there are vertices $a \in x_1x_3,b,c \in x_1x_2,d \in x_2x_3$ such that $ab, cd$ are initial segments and all initial segments are either in $x_1ab$ or in $x_2cd$, and are disjoint. Moreover, we obtain the desired structure of blocking lines in regions $x_1ab, x_2cd$. Moreover, $bc, dx_3, ax_3$ are minimal, so $R = x_3abcd$ is a minimal $\overline{\mathcal{S}}$-region, inside which $a$ is internally blocked iff $b$ is, and $c$ is internally blocked iff $d$ is. If none of these four vertices are internally blocked in $R$, $\Delta$ is of type $\tbty{B}{2}$. Assume for contradiction that some vertex among them is internally blocked in $R$. Without loss of generality, one of $a,b$ is internally blocked, so both of them must be internally blocked. However, $x_3$ is externally blocked in $R$, so $\beta(b)$ must pass through $d$. However, we have $l(cd) \cap \beta(b) = d$, so by the assumption \textbf{(A)} applied to $R$ and vertex $b$, we have that $l(ax_3)$ passes through $d$, which is a contradiction, as desired.\\[3pt]
\noindent\textbf{Case 3: All three vertices are non-empty.} Similarly to the previous case, there are vertices $a,f \in x_1x_3, b,c \in x_1x_2, d,e \in x_2x_3$ such that $ab, cd, ef$ are minimal initial segments, and all initial segments are in regions $x_1ab,$ $x_2cd,$ $x_3ef$, and are minimal. From this, $fa, bc, de$ are also minimal. Moreover, we know the structure of blocking lines in $x_1ab, x_2cd, x_3ef$, and it remains to determine the structure of blocking lines in the minimal $\overline{\mathcal{S}}$-region $R = abcdef$.\\
\indent We have that in $R$, $a$ is internally blocked iff $b$ is, $c$ is internally blocked iff $d$ is, and $e$ is internally blocked iff $f$. If all these are externally blocked, then $\Delta$ has $\tbty{B}{3}$, as desired. Now, assume that, without loss of generality, one of vertices $a,b$ is internally blocked in $R$. But then both $a$ and $b$ must be internally blocked. Suppose for a moment that there is a blocking segment in $R$, which is a small diagonal of hexagon $abcdef$. By symmetry, we may suppose it contains $a$, so it is $ac$ or $ae$. If $ac$ is blocking, however, $\beta(a) \cap l(bc) = c$, so by the assumption \textbf{(A)}, $l(ef)$ has to contain $c$, which is impossible. Similarly, if $ae$ is blocking, $\beta(a)\cap l(fe) = e$, so by the assumption \textbf{(A)}, $l(bc)$ has to contain $e$, which is also impossible.  Hence, the only possible blocking segments in $R$ are the main diagonals $ad, be, cf$. As $a,b$ are internally blocked, we have that $ad, be$ are blocking segments. But, as $d$ is internally blocked in $R$, so is $c$, so $cf$ is also blocking, showing that $\Delta$ has type $\tbty{B}{3}$.\qed\end{proof}

Combining all ingredients, we are ready to prove the \ct.

\begin{proof}[Proof of the \ct.] We prove the theorem by induction on the size $n$ of \tba. The base of induction is $n = 0$, when the \tba~has type $\tbty{B}{0}$.\\
\indent Now, assume that $n\geq 1$ and $(n-1)$-\ct~holds and let $\Delta$ be a \tba~of size $n$ with vertices $x_1,x_2,x_3$. If $\Delta$ has intersecting initial segments that satisfy the conditions of Proposition~\ref{finalS3prop}, we are done. Otherwise, if there are any intersecting initial segments in $\Delta$ at all, by Proposition~\ref{ForbStruct1}, we must have some of $x_1, x_2,x_3$ internally blocked. Then, we are done by Proposition~\ref{FinalInternalBlocks}. Finally, if there are no intersecting initial segments in $\Delta$, we may apply Proposition~\ref{DisjointSegmentsProp} to finish the proof.\qed\end{proof}

\section{Concluding remarks}

Our first remark is that it would also be very interesting to classify all triangle blocking arrangements, without the assumption \textbf{(A)}. However, this is probably much harder, as the following discussion suggests.\\

\noindent\textbf{A comment about the assumption \textbf{(A)}.} As we have seen before, the assumption \textbf{(A)} is necessary in the \ct. However, there could be hope that we are using this assumption only locally, and that the arrangement types are rigid enough so that after some point, large arrangements are forced to combine as in the \ct. However, Figure~\ref{TBAnotInDef} shows that we cannot localize the assumption \textbf{(A)}. Namely, a natural weaker assumption would be that for some fixed $K$, and for any minimal $\bar{\mathcal{S}}$-region $R$, for any consecutive vertices $v_1, v_2, v_3, v_4, v_5$ appearing in this order on $\partial R$, we have that, if $l(v_1v_2)$ and $\beta(v_3)$ intersect in $T$ as some point $p$, $\beta(v_3)$ meets the interior of $R$, and $v_2p$ or $v_3p$ has at most $K$ points, then $l(v_1v_2), \beta(v_3), l(v_4v_5)$ are concurrent. But this figure shows that we may have as many points between as we want; the only region where \textbf{(A)} fails is $abcdef$, namely $l(ab), \beta(c)$ meet at $x_1$, but $x_1 \notin l(de)$, and this region satisfies the weaker assumption.\\[6pt]
\noindent\textbf{Relationship with Green Tao theorem on ordinary lines.} We discuss very briefly the proof of the result about ordinary lines of Green and Tao~\cite{GreenTao}. It can be summarized as follows.
\begin{itemize}
\item[\textbf{Step 1.}] Move to the dual.
\item[\textbf{Step 2.}] Apply Melchior's inequality (which is a consequence of Euler's formula) to get some control over point-line incidences.
\item[\textbf{Step 3.}] Use the incidence information to find large pieces with `triangular structure'.
\item[\textbf{Step 4.}] Study `triangular structure' to show that it looks like a hexagonal lattice.
\item[\textbf{Step 5.}] Apply the dual version of Chasles' theorem to place the points on a cubic. 
\end{itemize}

\textbf{Step 4} corresponds to our \ct, and to emphasize the similarity, we phrase it as the following Classification Lemma. The conclusion is written slightly informally.

\begin{lemma}[Classification of triangular arrangements, Green and Tao~\cite{GreenTao}.] Let $T = x_1x_2x_3$ be a triangle in the plane, and let $\mathcal{S}$ be a collection of segments with endpoints on $\partial T$ with the property that whenever two segments in $\bar{\mathcal{S}} = \mathcal{S} \cup \{x_1x_2, x_2x_3, x_3x_1\}$ intersect, there is a unique third segment in $\bar{\mathcal{S}}$ that contains the intersection point, except possibly if the intersection is one of $x_1, x_2, x_3$,  in which case there might not be the third segment. Then, $\bar{\mathcal{S}}$ forms a hexagonal grid shown in Figure~\ref{hexagonalGridStructFig}. 
\end{lemma}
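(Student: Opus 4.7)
The plan is to prove the lemma by induction on $|\mathcal{S}|$, mirroring the blueprint of the Classification Theorem~\ref{classificationTheorem}. The base case $|\mathcal{S}| = 0$ is immediate: $\overline{\mathcal{S}}$ consists of just the three sides of $T$, a degenerate hexagonal grid. For the inductive step, I would write $\Delta = (T, \mathcal{S})$ and say that $(n-1)$-classification holds if the conclusion is true for all arrangements of size at most $n-1$.

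The first major step is to establish an analog of Propositions~\ref{ForbStruct1}--\ref{ForbStruct3} adapted to this setting: two initial segments $s_1, s_2 \in \mathcal{S}$ with endpoints on the \emph{same} pair of sides of $T$ cannot intersect. Indeed, if they did, their intersection $p$ would have a unique third concurrent segment $s_3 \in \bar{\mathcal{S}}$; applying the inductive hypothesis to one of the sub-triangles cut off by $s_1$ or by $s_2$ would produce a hexagonal grid whose edge-incidences contradict the presence of $s_3$. From this one concludes that $\mathcal{S}$ splits into three classes, according to the pair of edges of $T$ containing the endpoints, and no two members of the same class cross.

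The second step is to choose a minimal segment: pick $s \in \mathcal{S}$ with endpoints $a \in x_1x_2$, $b \in x_1x_3$ such that the sub-triangle $T' \colon= x_1ab$ contains no other interior intersections of $\overline{\mathcal{S}}$. By induction the induced configuration on $T'$ has the hexagonal grid structure, so along $s$ the vertices already inherit two coherent families of parallel segments ending on $x_1x_2$ and on $x_1x_3$. Now move to the trapezoidal complement $abx_3x_2$. Each vertex $v$ on $s$ has a unique third segment through it, which extends from $T'$ into the trapezoid; its other endpoint must lie on $x_2x_3$ by the parallel class argument of the first step. Propagating this outward from $s$ and using the three-line concurrence condition at every new intersection, one builds up a complete hexagonal grid on $T$, in which the three parallel classes have matched cardinalities and the endpoints on each edge of $T$ are evenly spaced with respect to the combinatorics of the grid.

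The main obstacle is showing rigidity of the global matching across $s$: one must verify that the segments crossing $s$ extend to segments in $\mathcal{S}$ whose endpoints hit the unique positions on $\partial T$ that are compatible with the hexagonal combinatorics inherited from $T'$. This requires keeping track of the three parallel classes simultaneously and using the concurrence condition at every vertex, exactly as the \ct\ uses assumption \textbf{(A)} to pin down coincidences of lines. The absence of blocking segments $\mathcal{B}$ simplifies the case analysis compared with Theorem~\ref{classificationTheorem}, since the concurrence condition is symmetric among all segments of $\overline{\mathcal{S}}$ and one does not need to distinguish between internally and externally blocked vertices; the combinatorial bookkeeping, however, is essentially the same.
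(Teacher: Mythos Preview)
Your overall strategy --- induction on $|\mathcal{S}|$, first proving that two segments with endpoints on the same pair of sides of $T$ cannot cross, then choosing a distinguished segment, applying the induction hypothesis to the sub-triangle it cuts off, and propagating across --- is exactly the paper's approach, and your first step is correct and matches the paper's argument closely.

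The gap is in your second step, and it comes from choosing the segment on the wrong side. You take $s = ab$ so that $T' = x_1ab$ has ``no other interior intersections of $\overline{\mathcal{S}}$''; this makes the induced arrangement on $T'$ essentially trivial, so the induction hypothesis there gives you nothing to propagate from, and the ``two coherent families of parallel segments'' you claim along $s$ are not actually produced. The paper does the opposite: it picks $v \in x_1x_2$ to be the endpoint \emph{closest to $x_2$} among all segments in the $x_1x_2 \leftrightarrow x_1x_3$ class, with partner $u \in x_1x_3$. Then $T' = x_1vu$ is the \emph{largest} such sub-triangle; by step one it contains every segment of that class, and --- this is the point --- the intervals $vx_2$ and $ux_3$ contain no vertices whatsoever. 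Induction on $T'$ now yields a genuine hexagonal grid with vertices $w_1, \dots, w_k$ along $uv$.

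The propagation works precisely because of that emptiness. The segment through $w_1$ with other endpoint on $x_1u$ extends across $uv$ to meet $x_2x_3$ at some $t_1$; the third segment through $t_1$ must have its other endpoint on $x_1x_2$, and since $vx_2$ is empty that endpoint is forced to be $v$. One then treats $w_2$: its extension meets $x_2x_3$ at $t_2$, and the third segment through $t_2$ is forced through $w_1$ for the same reason. Iterating pins down every new point on $x_2x_3$. Your trapezoid argument lacks exactly this forcing mechanism: without one side of $s$ being empty you cannot determine where the extended segments land, and the ``rigidity of the global matching across $s$'' that you correctly flag as the main obstacle is left unresolved.
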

\begin{figure}
\begin{center}
\includegraphics[width = 0.3\textwidth]{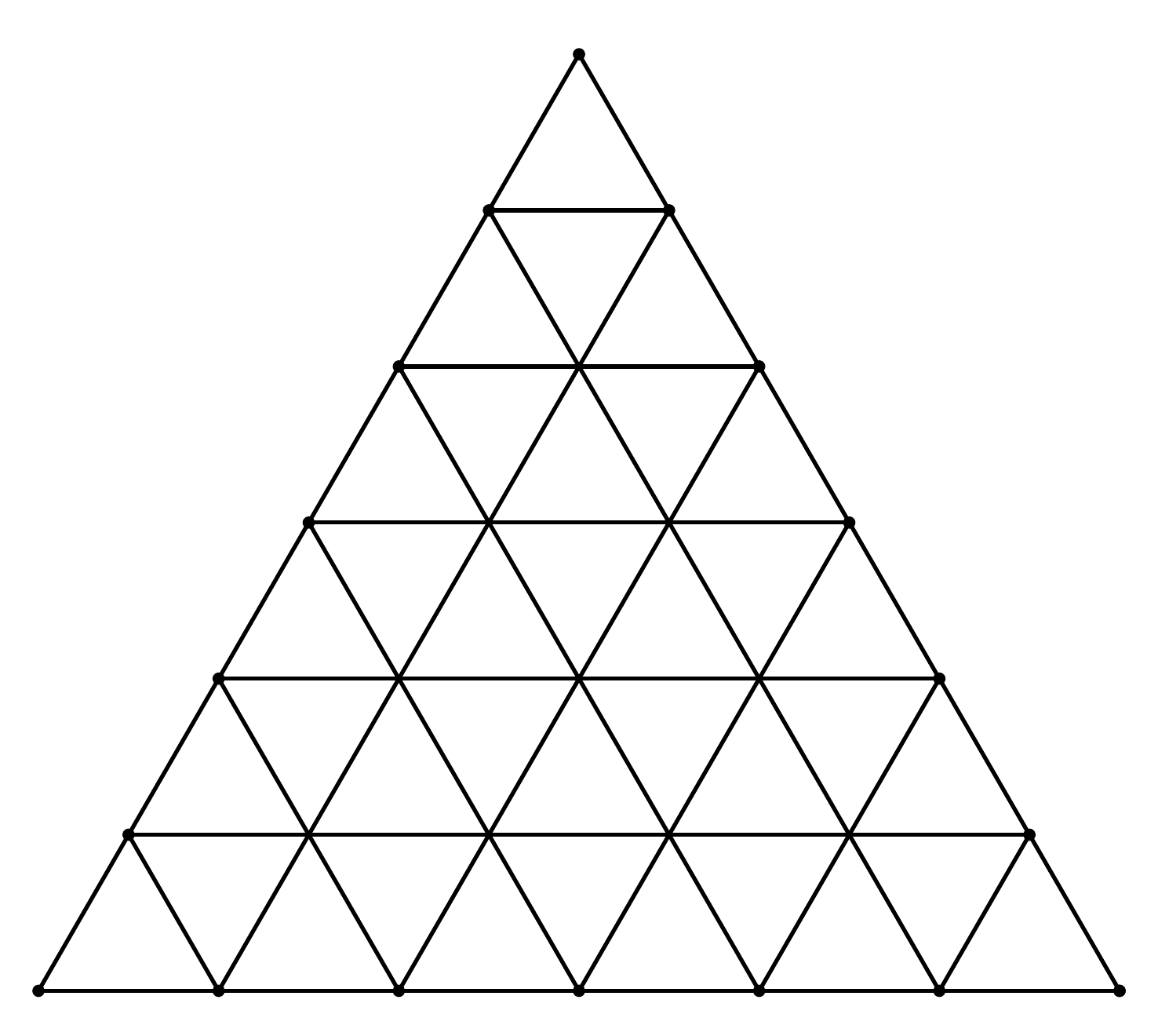}
\caption{Hexagonal grid}
\label{hexagonalGridStructFig}
\end{center}
\end{figure}
\begin{proof} We prove the claim by induction on $|\mathcal{S}|$. If $\mathcal{S}$ is empty, we are done. Assume now that we are given $\mathcal{S}$ and the claim holds for all smaller arrangements.\\
\indent Observe immediately that if $v$ is an intersection point on some edge of $T$, but not among the vertices $x_1, x_2, x_3$, then, we have $u, w \in \partial T$ such that $uv, wv \in \mathcal{S}$. Without loss of generality, $v \in x_1x_2, u \in x_1x_3$. If $w \in x_1x_3$ also, then, without loss of generality, $u$ is between $x_1$ and $w$, so applying the induction hypothesis to $vx_1w$ gives a contradiction, as hexagonal structure does not allow three segments at $v$. Hence, we must always have the two segments that meet on $\partial T$ between different pairs of edges of $T$.\\
\indent Similarly, we show that if two segment intersect, then they are between different pairs of edges of $T$. Suppose for the sake of contradiction that $a, c \in x_1x_2$ and $b,d \in x_1x_3$ are such that $ab, cd$ intersect at $e$. Without loss of generality, $c$ is between $x_1$ and $a$. Applying the induction hypothesis to $x_1ab$, we obtain a segment $ef$ with $f \in x_1b$. But, applying the induction hypothesis to $x_1cd$, we obtain a contradiction.\\
\indent Without loss of generality, we have a segment between $x_1x_2$ and $x_1x_3$. Pick an endpoint $v \in x_1x_2$ of such a segment with the property that $v$ is closest to $x_2$ among all such points. Let $u \in x_1x_3$ be such that $uv \in \mathcal{S}$. By observations before, there are no other points in $vx_2$ and all the segments between $x_1x_2$ and $x_1x_3$ are in $ux_1v$. In particular, $ux_3$ also has no points in its interior. We may apply the induction hypothesis to $x_1vu$, to obtain hexagonal structure there, with points $w_1, w_2, \dots, w_k$ appearing from $v$ to $u$. Consider segment $w_1b_1$ with $b_1 \in x_1u$. Then $l(w_1b_1)$ must cross $x_2x_3$, at some point $t_1$. But, then at $t_1$ we also have a segment with other endpoint on $x_1x_2$. However, by the choice of $w_1$, this may only be $v$. Next, consider $w_2$, and apply the same argument. We obtain a point $t_2 \in x_2x_3$ such that $t_2w_2$ is a part of a segment with other endpoint on $x_1x_3$, so similarly we obtain $t_2w_1$ is a subset of a segment in $\mathcal{S}$. Proceeding further in this fashion, we eventually obtain the hexagonal grid.\qed\end{proof}

It is therefore plausible that an extremal result could be proved with a similar general strategy, but given the significant differences in the difficulty of the relevant Classification Theorem, we expect that the new interesting difficulties will arise, in particular because not all types we defined come from duals of points on cubic curves. Nevertheless, we will investigate this further.\\[6pt]
\noindent \textbf{Classification Theorem for curves in the plane.} Going back to the proof of~\ct, we made a heavy use of topological properties of the real plane. However, we mainly focused on order of points on a line, and did not rely too much on the fact that the lines are straight (except that at intersection points the lines change sides with respect to one another). Instead of asking what happens over a different field, it could be possible that a similar, if not the same theorem holds for curves instead lines. Here we need some conditions on the curves, e.g. that we have some family of curves $\mathcal{C}$ with the property that through any two distinct points, there is a unique line in $\mathcal{C}$ containing them. Then, we could consider configurations where segments are intersections of curves in $\mathcal{C}$ with $T$. Or, we might not need to go that far and maybe we could consider curves with endpoints on $\partial T$ which are not self-intersecting and any two intersect in at most one point. This is something we shall also study further.\\
\indent Returning to the possibility of using a different field, this is of course another interesting question. However, over $\mathbb{C}$ we have, for example, the Hesse configuration (which can be realized as inflection points of a cubic curve), which gives 9 points, without ordinary lines. In this setting the interesting phenomenon is actually of a different nature.

\begin{theorem}[Kelly,~\cite{Kelly}] Any finite set of points in a complex space without ordinary lines is coplanar.\end{theorem}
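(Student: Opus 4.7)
The plan is to follow Kelly's strategy: iterated projection from points of $S$ to reduce to $\mathbb{CP}^2$, and then Hirzebruch's inequality on complex line arrangements. I would argue by contradiction: suppose $S$ is finite, has no ordinary line, but is not contained in any complex $2$-plane.

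The first step is a projection lemma. For any $p \in S \subset \mathbb{CP}^d$ with $d \geq 3$, let $\pi_p \colon S \setminus \{p\} \to \mathbb{CP}^{d-1}$ be projection from $p$, so two points coincide in the image iff they are collinear with $p$ in $S$. I claim the image $S^{\ast}$ has no ordinary line. Indeed, if $\pi_p(q),\pi_p(r)$ were the only points of $S^{\ast}$ on some line, the plane $\Pi = \langle p,q,r\rangle$ would meet $S$ only in the two $S$-lines $pq$ and $pr$; picking $q' \in S \cap pq \setminus \{p,q\}$ and $r' \in S \cap pr \setminus \{p,r\}$ (which exist because $pq$ and $pr$ are not ordinary), the line $q'r'$ lies in $\Pi$ and contains no third point of $S$, giving an ordinary line of $S$ and a contradiction. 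Iterating this step reduces the problem to a configuration $S^{(d-2)} \subset \mathbb{CP}^2$ that still has no ordinary line and, because $S$ was not planar, is not contained in a single line.

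The second step is to dualise $S^{(d-2)}$ to an arrangement $\mathcal{L}$ of $n := |S^{(d-2)}|$ complex lines in $\mathbb{CP}^2$ and apply Hirzebruch's inequality. The non-collinearity of $S^{(d-2)}$ rules out $\mathcal{L}$ being concurrent or a near-pencil, so
\[
t_2 + \tfrac{3}{4}\,t_3 \;\geq\; n + \sum_{k \geq 5}(2k-9)\,t_k
\]
holds, where $t_k$ counts $k$-fold intersections of $\mathcal{L}$. The hypothesis translates to $t_2 = 0$. Combining this with the incidence identity $\binom{n}{2} = \sum_{k\geq 2}\binom{k}{2}\,t_k$ and the multiplicity constraints recorded by the projection lemma (each fibre contains $\geq 2$ points of $S$), I would extract the contradiction.

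The main obstacle is that Hirzebruch's inequality is \emph{saturated} by the Hesse configuration of nine points and twelve $3$-rich lines in $\mathbb{CP}^2$, so planar counterexamples to complex Sylvester--Gallai genuinely exist; the inequality alone cannot force the conclusion. Hence the final contradiction must marry Hirzebruch's bound to the rigidity imposed by the non-planarity of $S$, which is carried through the iterated projections as multiplicity data on the fibres. Quantifying that a non-planar source cannot project to a Hesse-like configuration, and turning this into an extra term beyond the Hirzebruch bound, is the technical heart of the argument and the main step I expect to be difficult to pin down cleanly.
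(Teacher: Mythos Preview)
The paper does not prove Kelly's theorem; it is merely cited in the concluding remarks as a known result over $\mathbb{C}$, with no argument given. There is therefore nothing in the paper to compare your attempt against.

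That said, your overall strategy---iterated projection to reduce the dimension, followed by Hirzebruch's inequality on the resulting planar configuration---is Kelly's. But your write-up explicitly leaves the decisive step undone. You project all the way down to $\mathbb{CP}^2$, correctly observe that Hirzebruch's inequality with $t_2=0$ is saturated by the Hesse configuration, and then say only that ``multiplicity data on the fibres'' ought to supply the missing constraint, calling this ``the main step I expect to be difficult to pin down cleanly.'' This is the whole content of the theorem, and you have not supplied it. Once you are in $\mathbb{CP}^2$, Hirzebruch is a statement purely about the planar point set $S^{(d-2)}$; it is not at all clear how fibre multiplicities from the projection would enter that inequality, and you give no mechanism. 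Kelly's actual argument does not iterate the projection blindly down to the plane: it stops in $\mathbb{CP}^3$ and exploits the three-dimensional incidence structure to gain leverage beyond what a single planar Hirzebruch application can provide. You would need to read Kelly's 1986 paper, or the later elementary proof of Elkies--Pretorius--Swanepoel, to see how the gap is actually closed.

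Two minor points. In your projection lemma the line $qr$ itself is already ordinary in $S$, so there is no need to pass to auxiliary points $q',r'$. And your assertion that non-collinearity of $S^{(d-2)}$ ``rules out $\mathcal{L}$ being \ldots\ a near-pencil'' is not correct as stated; what is true, and immediate, is that $t_2=0$ already excludes a near-pencil.
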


We also expect that a classification theorem over finite fields would be very different from the one proved here.

\subsection{Acknowledgements}
\hspace{12pt}I would like to thank Trinity College and the Department of Pure Mathematics and Mathematical Statistics of Cambridge University for their generous support while this work was carried out and Imre Leader for helpful discussions concerning this paper. I also acknowledge the support of the Ministry of Education, Science and Technological Development of the Republic of Serbia, Grant III044006.

\end{document}